\newtheorem{theorem}{Theorem}[section]
\newtheorem{lemma}[theorem]{Lemma}
\theoremstyle{definition}
\newtheorem{definition}[theorem]{Definition}
\newtheorem{example}[theorem]{Example}
\theoremstyle{remark}
\newtheorem{remark}[theorem]{Remark}
\numberwithin{equation}{section}
\newtheorem{corollary}[theorem]{Corollary}
\newtheorem{proposition}[theorem]{Proposition}
\newtheorem{observation}[theorem]{Observation}
\newtheorem{question}[theorem]{Question}
\newtheorem*{claim}{Claim}
\newcommand{\bb}[1]{\mathbb{#1}}
\newcommand{\cal}[1]{\mathcal{#1}}
\renewcommand{\rm}[1]{\mathrm{#1}}
\newcommand{\bA}{\mathbf{A}}
\newcommand{\bB}{\mathbf{B}}
\newcommand{\bC}{\mathbf{C}}
\newcommand{\bK}{\mathbf{K}}
\newcommand{\bP}{\mathbf{P}}
\newcommand{\bQ}{\mathbf{Q}}
\newcommand{\bX}{\mathbf{X}}
\newcommand{\wh}{\widehat}
\newcommand{\ol}{\overline}
\newcommand{\Sa}{\mathrm{Sa}}
\newcommand{\sa}{\mathrm{Sa}}
\newcommand{\fr}{Fra\"iss\'e }
\renewcommand{\phi}{\varphi}
\newcommand{\emb}{\mathrm{Emb}}
\newcommand{\aut}{\mathrm{Aut}}
\newcommand{\im}{\mathrm{Im}}
\newcommand{\rel}{\mathrm{rel}}
\newcommand{\str}{\mathbf{Str}}
\newcommand{\ct}{\mathrm{CT}}
\def\oeis#1{Sequence #1 of OEIS~\cite{oeis}}
\newcommand{\card}[1]{\left| #1 \right|}    
\def\cont{^\frown}
\def\L{\mathrm L}
\def\R{\mathrm R}
\def\X{\mathrm X}
\def\str#1{\mathbf {#1}}
\def\lleq{\mathbin{\leq_{\mathrm{\Sigma}}}}
\def\llt{\mathbin{<_{\mathrm{\Sigma}}}}
\def\lexleq{\mathbin{\leq_{\mathrm{lex}}}}
\def\lexlt{\mathbin{<_{\mathrm{lex}}}}
\def\eltlt{\vartriangleleft}
\def\eltleq{\trianglelefteq}
\def\Alphabet{\Sigma}
\def\rel#1#2{R_{\mathbf{#1}}^{#2}}
\def\Shape#1#2{\mathrm{Shape}\allowbreak(#1,\allowbreak #2)}
\def\nShape#1#2#3{\mathrm{Shape}_{#1}\allowbreak(#2,\allowbreak #3)}
\newcommand{\AmbStr}[1]{(#1, \allowbreak \lexleq\nobreak,\allowbreak\preceq\nobreak,\allowbreak\eltleq)}
\begin{document}
\title[Big Ramsey degrees of the generic partial order]{Characterisation of the big Ramsey degrees of the generic partial order}
	\author[M. Balko et al.]{Martin Balko}
        \address{Department of Applied Mathematics (KAM), Charles University, Ma\-lo\-stranské~nám\v estí 25, Praha 1, Czech Republic}
	\email{balko@kam.mff.cuni.cz}
	\author[]{David Chodounsk\'y}
        \address{Department of Applied Mathematics (KAM), Charles University, Ma\-lo\-stranské~nám\v estí 25, Praha 1, Czech Republic}
        \address{Institute of Mathematics of the Czech Academy of Sciences, \v{Z}itn\'a~25, Praha~1, Czech Republic}
        \email{chodounsky@math.cas.cz}
	\author[]{Natasha Dobrinen}
	\address{Department of Mathematics, University of Notre Dame,
	255 Hurley Bldg, Notre Dame, IN 46556 USA}
	\email{ndobrine@nd.edu}
	\author[]{Jan Hubi\v cka}
        \address{Department of Applied Mathematics (KAM), Charles University, Ma\-lo\-stranské~nám\v estí 25, Praha 1, Czech Republic}
	\email{hubicka@kam.mff.cuni.cz}
	\author[]{Mat\v ej Kone\v cn\'y}
        \address{Institute of Algebra, TU Dresden, Helmholtzstraße 10, 01069 Dresden, Germany\\and Department of Applied Mathematics (KAM), Charles University, Ma\-lo\-stranské~nám\v estí 25, Praha 1, Czech Republic}
	\email{matej.konecny@tu-dresden.de}
	\author[]{Lluis Vena}
	\address{Universitat Polit\`ecnica de Catalunya, Barcelona, Spain}
	\email{lluis.vena@gmail.com}
	\author[]{Andy Zucker}
	\address{Department of Pure Mathematics, University of Waterloo, 200 University Ave W, Waterloo, ON N2L 3G1, Canada}
	\email{a3zucker@uwaterloo.ca}

\subjclass[2020]{Primary 05D10, 06A07; Secondary 05C05, 05C55}
\date{}

\dedicatory{}
\begin{abstract}
	As a result of 33 intercontinental Zoom calls, we characterise big
	Ramsey degrees of the generic partial order.
	This is an infinitary extension of the well known fact that
	finite partial orders endowed with linear extensions form a Ramsey class (this result
	was announced by Ne\v set\v ril and R\"odl in 1984).
	Towards this, we refine earlier upper bounds obtained by Hubi\v cka based on a new connection of big Ramsey degrees to the Carlson--Simpson theorem
	and we also introduce a new technique of giving lower bounds using an iterated application of the upper-bound theorem. 
\end{abstract}
\maketitle
\section{Introduction}
\label{sec:introduction}

We characterise the big Ramsey degrees of (finite substructures of) $\str{P}$ in a similar fashion as the authors did in \cite{Balko2021exact} for binary finitely constrained free amalgamation classes. Similarly to that case, our characterisation is in terms of a tree-like object we call a ``poset diary" where each level has exactly one critical event. It follows that any poset diary which codes the generic poset is a big Ramsey structure for $\str{P}$ (Definition~1.3 of \cite{zucker2017}).

Relative to the small Ramsey degree case (see e.g.~\cite{hubicka2025twenty,NVT14,Bodirsky2015,hubicka2020structural}), 
there are fewer classes of structures for which
big Ramsey degrees are fully understood.  The following is the current state of the art (see~\cite{hubicka2024survey,hubicka2025twenty,dobrinen2021ramsey}):

 \begin{enumerate}
	 \item The Ramsey theorem implies that the big Ramsey degree of every finite linear order in the linear order $\omega$ is 1.
	 \item In 1979, Devlin refined upper bounds by Laver and characterised big Ramsey degrees of the rational linear order~\cite{devlin1979,todorcevic2010introduction}, see also \oeis{A000182}.
	 \item In 2006 Laflamme, Sauer and Vuksanovi\'c characterised big Ramsey degrees of the Rado (or random) graph and related random structures in binary languages~\cite{Laflamme2006}. Actual numbers were counted by Larson~\cite{larson2008counting}, see also \oeis{A293158}.
	\item In 
 2008
 Nguyen Van Thé characterised big Ramsey degrees of the ultrametric {U}rysohn spaces~\cite{NVT2008}.
	\item In 2010 Laflamme, Nguyen Van Thé and Sauer~\cite{laflamme2010partition} characterised the big Ramsey degrees of the dense local order, $\mathbf{S}(2)$
 and the $\mathbb{Q}_n$ structures.
\item		 
  In 2020
		 Coulson, Dobrinen, and Patel  in ~\cite{coulson2022indivisibility} and ~\cite{coulson2022SDAP} 
   characterised  the big Ramsey degrees of
homogeneous binary relational structures  which satisfy  SDAP$^+$,
recovering work  
in~\cite{Laflamme2006} 
and~\cite{laflamme2010partition} and characterising the big Ramsey degrees of the ordered versions of structures in~\cite{Laflamme2006},
the generic $n$-partite and generic ordered $n$-partite graphs, and the $(\mathbb{Q}_{\mathbb{Q}})_n$ hierarchy.
	\item In 2020 Barbosa characterised the big Ramsey degrees of  the directed circular orders $\mathbf{S}(k)$, $k\geq 2$, \cite{barbosa2020categorical}.
	\item  In 2020 a characterisation of the big Ramsey degrees of the triangle-free Henson graph was obtained by Dobrinen~\cite{dobrinen2020ramsey} and independently by the remaining authors of this article. Joining efforts,
		the authors were able to fully characterise big Ramsey degrees of free amalgamation classes in finite binary languages described by finitely many forbidden cliques~\cite{Balko2021exact}. Exact counts for the case of $K_3$ and $K_4$-free graphs were recently determined by Hubička, Konečný, Vodseďálek, and Zucker~\cite{Vodsedalek2025,Vodsedalek2025bc}, see also \oeis{A387346}.
 \end{enumerate}

If one draws an analogy to the small Ramsey results, many of the  aforementioned
characterisations can be seen as infinitary generalisations of special cases of
the Ne\v set\v ril--R\"odl theorem~\cite{Nevsetvril1977}. Small Ramsey degrees (or the \emph{Ramsey expansions} satisfying the \emph{expansion property}, see~\cite{NVT14})
are known for many other classes including the class of partial orders with linear extensions~\cite{Nevsetvril1984,Trotter1985,sokic2012ramsey,masulovic2016pre,solecki2017ramsey,nevsetvril2018ramsey}, metric spaces~\cite{Nevsetvril2007,Dellamonica2012,masulovic2016pre} and other examples satisfying rather general structural conditions~\cite{Hubicka2016} (see also recent surveys \cite{hubicka2025twenty,hubicka2020structural,Konecny2023phd}).

In this paper, we characterize the big Ramsey degrees of the
generic partial order. This class represents an important new example since its finitary
counterpart is not a consequence of the Ne\v set\v ril--R\"odl theorem~\cite{Nevsetvril1977}, nor is it of the form covered by  \cite{coulson2022SDAP}.
To complete our characterisation, we further refine the proof techniques introduced in \cite{Hubicka2020CS} used to prove finite upper bounds, which were based on the Carlson--Simpson
theorem, and we also give a fundamentally different approach than in \cite{Balko2021exact} to prove the lower bounds. The techniques introduced in this paper
generalize to other classes as we briefly outline in the final section. However, to keep the paper simple, we did not attempt to state the results
in the greatest possible generality.

Given structures $\str{A}$ and $\str{B}$, we denote by $\emb(\bA, \bB)$ the set
of all embeddings from $\str{A}$ to $\str{B}$. We write $\str{C}\longrightarrow (\str{B})^\str{A}_{k,\ell}$ to denote the following statement:
\begin{quote}
	For every colouring $\chi\colon \emb(\bA, \bC)\to k$, there exists
	$f\in \emb(\bB, \bC)$ such that $\chi$  takes no more than $l$ values on $f\circ \emb(\bA, \bB)$.
\end{quote}
For a countably infinite structure $\str{B}$ and a finite substructure $\str{A}$, the \emph{big Ramsey degree} of $\str{A}$ in $\str{B}$ is
the least number $\ell\in \mathbb N\cup \{\infty\}$ such that $\str{B}\longrightarrow (\str{B})^\str{A}_{k,\ell}$ for every $k\in \mathbb N$.
Similarly, if $\mathcal K$ is a class of finite structures and $\str{A}\in\mathcal K$, the \emph{small Ramsey degree} of $\str{A}$ in~$\mathcal K$ is the least number $\ell\in \mathbb N\cup \{\infty\}$ such that for every $\str{B}\in \mathcal K$ and $k\in \mathbb N$ there exists $\str{C}\in \mathcal K$ such that $\str{C}\longrightarrow (\str{B})^\str{A}_{k,l}$.

A structure is \emph{homogeneous} if every isomorphism between
two of its finite
substructures extends to an automorphism.
It is well known that up to
isomorphism there is a unique homogeneous partial order $\str{P}=(P,\leq_\str{P})$ such that
every countable partial order has an embedding into $\str{P}$. The order $\str{P}$ is called
the \emph{generic partial order} (see e.g.~\cite{Macpherson2011}). We refine the following recent result.

\begin{theorem}[Hubi\v cka~\cite{Hubicka2020CS}]
	The big Ramsey degree of every finite partial order
	in the
	generic partial order $\str{P}$ is finite.
\end{theorem}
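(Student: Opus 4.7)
My plan is to prove the theorem by encoding the generic partial order $\str{P}$ as a concrete set of words over a finite alphabet and then invoking the Carlson--Simpson theorem, following the template established in \cite{Hubicka2020CS}.

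As a first step I would fix a finite alphabet $\Alphabet$ (equipped with a linear order) and describe an explicit set $W \subseteq \Alphabet^{<\omega}$ of finite words, together with a binary relation defined by a position-by-position local rule, such that $(W, \leq)$ is universal and homogeneous among countable partial orders and hence isomorphic to $\str{P}$. The letters of $\Alphabet$ should record, at each coordinate, one of ``smaller'', ``larger'', ``incomparable'', together with a passive value; the relation $u \leq v$ is then decided by a designated coordinate, typically the rightmost one at which both words are active. A key feature to establish is that the restriction of $(W, \leq)$ to words lying in any infinite combinatorial subspace (in the sense of Carlson--Simpson) is again universal and homogeneous, so is again isomorphic to $\str{P}$; this is what lets us transport copies of $\str{P}$ into the final monochromatic subspace.

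Second, for every finite $\str{A}'\subseteq W$ I would define a \emph{shape}, recording the isomorphism type of $\str{A}'$ together with the combinatorial pattern of its coding words: the common lengths, the set of coordinates actually used, the relative order of the decisive coordinates, the letters appearing at those coordinates, and the interaction of the word order $\lexleq$ with the partial order. Only finitely many shapes can be realised on $\card{\str{A}}$ vertices because each shape is determined by finitely many bounded choices. The number of shapes realising $\str{A}$ will be our finite bound on the big Ramsey degree.

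Finally, given a finite colouring $\chi$ of $\binom{\str{P}}{\str{A}}$, I would transfer it through the coding to a colouring of the corresponding configurations in $W$. Using the standard envelope reduction, each embedding of $\str{A}$ is assigned a variable-word envelope of bounded length, which can be coloured by the induced colour and shape. Applying the Carlson--Simpson theorem (in an appropriate product form over $\Alphabet$), I would extract an infinite sequence of variable words whose generated combinatorial subspace is monochromatic on configurations of each fixed shape. By the universality observation above, this subspace contains a copy of $\str{P}$ on which at most one colour is used per shape of $\str{A}$, giving the desired finite bound. The hard part is the interface between the partial order and Carlson--Simpson: the encoding must be expressive enough that every countable partial order is realised even after restricting to a combinatorial subspace, yet rigid enough that the shape invariant is fine enough to be stabilised by Carlson--Simpson while taking only finitely many values on $\card{\str{A}}$-tuples. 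Constructing the envelopes and verifying these compatibility properties are the technical core; the Ramsey-theoretic step itself is then a direct application of Carlson--Simpson.
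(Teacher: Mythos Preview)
Your outline is essentially the approach of \cite{Hubicka2020CS}, which the paper both cites and refines: encode $\str{P}$ in $(\Sigma^*,\preceq)$ with $\Sigma=\{\L,\X,\R\}$, assign to each finite configuration an envelope/shape, and apply the Carlson--Simpson theorem to stabilise colours on each shape. One small correction: you should not expect $(W,\leq)$ or its combinatorial subspaces to be \emph{homogeneous} (hence isomorphic to $\str{P}$)---they are not---but only that each contains an embedded copy of $\str{P}$, which is what the paper verifies (Theorem~\ref{thm:posetemb}) and is all that the argument needs.
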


Our construction makes use of the following partial order introduced in~\cite{Hubicka2020CS}; this is closely related to
the tree of 1-types within a fixed enumeration of $\str{P}$ discussed in Section~\ref{sec:1types}.
Let $$\Sigma=\{\L,\X,\R\}$$ be an \emph{alphabet} ordered by $\llt$ as
$$\L\llt \X\llt \R.$$  We denote by
$\Sigma^*$ the set of all finite words in the alphabet $\Sigma$, by
$\lexleq$ their lexicographic order, and by $|w|$ the length of the word $w = w_0w_1\cdots w_{|w|-1}$. We denote the empty word by $\emptyset$. Given words $w,w'\in \Sigma^*$, we write $w\sqsubseteq w'$ if $w$ is an initial segment of $w'$.
\begin{definition}[Partial order $(\Sigma^*,\preceq)$]
	For $w,w'\in \Sigma^*$, we set $w\prec w'$ if and only if there exists $i$ such that:
	\begin{enumerate}
		\item  $0\leq i<\min(|w|,|w'|)$,
		\item $(w_i,w'_i)=(\L,\R)$, and
		\item $w_j\lleq w'_j$ for every $0\leq j< i$.
	\end{enumerate}
\end{definition}

We call the least $i$ satisfying the condition above the \emph{witness} of $w\prec w'$ and denote it by $i(w,w')$. We write $w\preceq w'$ if either $w\prec w'$ or $w=w'$.
\begin{proposition}[\cite{Hubicka2020CS}]
	\label{prop:pos}
	$(\Sigma^*,\preceq)$ is a partial order, and  $(\Sigma^*,\lexleq)$ is a linear extension of $(\Sigma^*,\preceq)$.
\end{proposition}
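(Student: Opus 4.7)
The plan is to verify the three axioms of a partial order for $\preceq$ (reflexivity is built into the definition) and then show that $w\prec w'$ forces $w\lexlt w'$.

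\emph{Antisymmetry.} Suppose for contradiction that $w\prec w'$ with witness $i$ and $w'\prec w$ with witness $i'$; set $k=\min(i,i')$. If $k=i$, then from the second chain we get $w'_{k}\lexleq w_{k}$ (either by clause (3) when $i<i'$, or by clause (2) when $i=i'$, which would force $(w'_i,w_i)=(\L,\R)$); but clause (2) applied to the first chain gives $w_k=\L$ and $w'_k=\R$, so $w'_k\not\lexleq w_k$, a contradiction. The case $k=i'$ is symmetric.

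\emph{Transitivity.} Assume $w\prec w'$ with witness $i$ and $w'\prec w''$ with witness $i'$. I will show that $j:=\min(i,i')$ is a witness for $w\prec w''$. If $j=i$, then at position $i$ we have $w_i=\L$ and $w'_i=\R$; since $i\leq i'$, clause (3) applied to the second chain yields $w'_i\lexleq w''_i$, and because $\R$ is maximal in $\lexleq$ this forces $w''_i=\R$. For every $\ell<i$ we have $w_\ell\lexleq w'_\ell\lexleq w''_\ell$ (the second inequality again from $\ell<i\leq i'$), so clause (3) is satisfied. The bound $i<|w''|$ follows from $i\leq i'<|w''|$. If instead $j=i'<i$, a symmetric argument using $w_{i'}\lexleq w'_{i'}=\L$ forces $w_{i'}=\L$, while $w''_{i'}=\R$ is given; the inequality $i'<|w|$ follows from $i'<i<|w|$, and clause (3) again propagates from the first chain.

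\emph{Lexicographic extension.} Suppose $w\prec w'$ with witness $i$, and let $j^{*}\leq i$ be the least index at which $w_{j^{*}}\neq w'_{j^{*}}$; such an index exists because $w_i=\L\neq\R=w'_i$ and both words have length $>i$. If $j^{*}<i$ then clause (3) gives $w_{j^{*}}\lexleq w'_{j^{*}}$, and since they differ we get $w_{j^{*}}\lexlt w'_{j^{*}}$; if $j^{*}=i$ then $w_{j^{*}}=\L\lexlt\R=w'_{j^{*}}$. In either case $w\lexlt w'$.

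The only step requiring any real thought is transitivity: the temptation is to case-split on all nine orderings of the triples $(w,w',w'')$, but the argument above shows that a single observation—$\R$ is maximal and $\L$ is minimal in $\lexleq$, so clause (3) of the second (resp.\ first) chain forces the symbol of $w''$ (resp.\ $w$) at the earlier witness—reduces everything to two symmetric cases and makes the witness for the composite relation explicit.
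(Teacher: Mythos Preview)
Your proof is correct and follows essentially the same approach as the paper's: for transitivity you both take the minimum of the two witnesses and case-split on which one is smaller, using that $\R$ is $\lexleq$-maximal and $\L$ is $\lexleq$-minimal to propagate the needed letter. You supply explicit arguments for antisymmetry and for $\prec$ implying $\lexlt$, which the paper simply declares easy; your versions are fine (and note, as a tiny aside, that the case $i=i'$ in your transitivity argument is actually vacuous, since it would force $w'_i$ to be both $\L$ and $\R$).
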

This partial order will serve as the main tool for characterising the big Ramsey degrees of $\str{P}$.
The intuition for its definition is described in Section~\ref{sec:1types}.
\begin{proof}[Proof of Proposition~\ref{prop:pos}]
	It is easy to see that $\preceq$ is reflexive and anti-symmetric.  We verify transitivity. Let $w\prec w'\prec w''$
	and put $i=\min(i(w,w'),i(w',w''))$.

	First assume that $i=i(w,w')$.  Then we have $w_i=\L$ and $w'_i=\R$, which implies that $w''_i=\R$.
	For every $0\leq j<i$, we have $w_j\lleq w'_j\lleq w''_j$.
	The transitivity of $\lleq$ then implies that $w\prec w''$, so $i(w,w'')$ exists with $i(w,w'')\leq i$.

	Now assume that $i=i(w',w'')$. Then we have $w'_i=\L$ and $w''_i=\R$, and since $w'_i=\L$, we also have $w_i=\L$.
	Again, for every $0\leq j<i$ it holds that $w_j\lleq w'_j\lleq w''_j$.
	Similarly as above, we have $w\preceq w''$ with $i(w,w'')\leq i$.
\end{proof}

Given a word $w$ and an integer $i \geq 0$, we denote by $w|_i$ the \emph{initial segment} of $w$ of length $i$.
For $S\subseteq \Sigma^*$, we let $\overline{S}$ be the set $\{w|_i\colon w\in S, 0\leq i\leq |w|\}$.
Given $\ell\geq 0$, we let $S_\ell=\{w\in S:|w|=\ell\}$ and call it the \emph{level $\ell$ of $S$}. When writing $\overline{S}_\ell$, we always mean level $\ell$ of $\overline{S}$. 
A word $w\in S$ is called a \emph{leaf} of $S$ if there is no word $w'\in S$ extending $w$.
Given a word $w$ and a letter $c\in \Sigma$, we denote by $w\cont c$
the word created by adding $c$ to the end of $w$. We also set $S\cont c=\{w\cont c:w\in S\}$. Given $\ell\geq 0$, $\Sigma^*_\ell = (\Sigma^*)_\ell$ denotes the set of words of length $\ell$ in the alphabet $\Sigma$.
\begin{definition}[Partial orders $(\Sigma^*_\ell,\eltleq)$]
	Given $\ell\geq 0$ and words $w, w'\in \Sigma^*_\ell$, we write $w\eltleq w'$ if 
	$w_i\lleq w'_i$ for every $0\leq i<\ell$. We write $w\perp w'$ if $w$ and $w'$ are $\eltleq$-incomparable (that is, if neither of $w\eltleq w'$ nor $w'\eltleq w$ holds). We call $w$ and $w'$ \emph{related} if one of expressions $w\preceq w'$, $w'\preceq w$ or $w\perp w'$ holds, otherwise they are \emph{unrelated}.
\end{definition}

Intuitively, $s\eltleq t$ describes those pairs of nodes on the same level where it is possible to extend $s$ and $t$ to nodes with $s'\prec t'$. However, observe that (somewhat counter-intuitively)
it can happen that both $\preceq$ and $\perp$  hold at the same time: For example, we have both that $\L\R\preceq \R\L$ as well as $\L\R\perp \R\L$. Later, we will introduce the notion of compatibility to help us handle these situations.
While it is easy to check that $(\Sigma_\ell^*,\eltleq)$ is a partial order for every $\ell\geq 0$, we do not extend it to all of $\Sigma^*$. 

To characterise the big Ramsey degrees of $\str{P}$, we introduce the following  technical definition. Our main theorem characterizes the big Ramsey degree of a given finite poset in $\str P$ as the number of \emph{poset-diaries} of a certain kind. The intuition behind the definition will be explained in Section~\ref{sec:1types}.

\begin{definition}[Poset-diaries]
	\label{def:posetdiary}
	A set $S\subseteq \Sigma^*$ is called a \emph{poset-diary} if no member of $S$ extends any other (i.e.,\ $S$ is an antichain in $(\Sigma^*, \sqsubseteq)$) and precisely one of the following four conditions is satisfied for every level $\ell$ with $0\leq \ell< \sup_{w\in S}|w|$:
	\begin{enumerate}
		\item \textbf{Leaf:}  There is $w\in \overline{S}_\ell$ related to every $u\in \overline{S}_\ell\setminus\{w\}$ and
		      \begin{align*}
			      \qquad \overline{S}_{\ell+1} & =(\overline{S}_\ell\setminus \{w\} )\cont \X.
		      \end{align*}
		\item \textbf{Splitting:}  There is $w\in \overline{S}_\ell$ such that
		      \begin{align*}
			      \begin{split}
				      \qquad \overline{S}_{\ell+1}&=\{z\in \overline{S}_\ell:z\lexlt w\}\cont \X\\
				      &\qquad \cup\{w\cont \X,w\cont\R\}\\
				      &\qquad \cup \{z\in \overline{S}_\ell:w\lexlt z\}\cont \R.
			      \end{split}
		      \end{align*}
		\item \textbf{New $\perp$:} There are unrelated words $v\lexlt w\in \overline{S}_\ell$ such that the following is satisfied.
		      \begin{enumerate}[label=(\Alph*)]
			      \item\label{A2} For every $u\in \overline{S}_\ell$ with $v\lexlt u\lexlt w$, we have $u\perp v$ or $u\perp w$. 
		      \end{enumerate}
			Moreover:
		      \begin{align*}
			      \begin{split}
				      \qquad \overline{S}_{\ell+1}&=\{z\in \overline{S}_\ell:z\lexlt v\}\cont \X\\
				      &\qquad \cup \{v\cont \R\}\\
				      &\qquad \cup\{z\in \overline{S}_\ell:v\lexlt z\lexlt w\hbox{ and }z\perp v\}\cont \X\\
				      &\qquad \cup\{z\in \overline{S}_\ell:v\lexlt z\lexlt w\hbox{ and } z\not\perp v\}\cont \R\\
				      &\qquad \cup \{{w}\cont \X\}\\
				      &\qquad \cup \{z\in \overline{S}_\ell:w\lexlt z\}\cont \R.
			      \end{split}
		      \end{align*}
		\item \textbf{New $\prec$:}  There are unrelated words $v\lexlt w\in \overline{S}_\ell$ such that the following is satisfied.
		      \begin{enumerate}[label=(B\arabic*)]
			      \item\label{B1} For every $u\in \overline{S}_\ell$ with $u\lexlt v$, we have $u\preceq w$ or $u\perp v$.
			      \item\label{B2} For every $u\in \overline{S}_\ell$ with $w\lexlt u$, we have $v\preceq u$ or $w\perp u$.
		      \end{enumerate}
			Moreover:
		      \begin{align*}
			      \begin{split}
				      \qquad \overline{S}_{\ell+1}&=\{z\in \overline{S}_\ell:z\lexlt v\hbox{ and }z\perp v\}\cont \X \\
				      &\qquad \cup \{z\in \overline{S}_\ell:z\lexlt v\hbox{ and }z\mathbin{\not \perp} v\}\cont \L\\
				      &\qquad \cup \{v\cont \L\}\\
				      &\qquad \cup\{z\in \overline{S}_\ell:v\lexlt z\lexlt w\}\cont \X\\
				      &\qquad \cup \{{w}\cont \R\}\\
				      &\qquad \cup \{z\in \overline{S}_\ell:w\lexlt z\hbox{ and }w\perp z\}\cont\X\\
				      &\qquad \cup \{z\in \overline{S}_\ell:w\lexlt z\hbox{ and }w\not\perp z\}\cont\R.
			      \end{split}
		      \end{align*}
	\end{enumerate}
	\begin{figure}
		\includegraphics{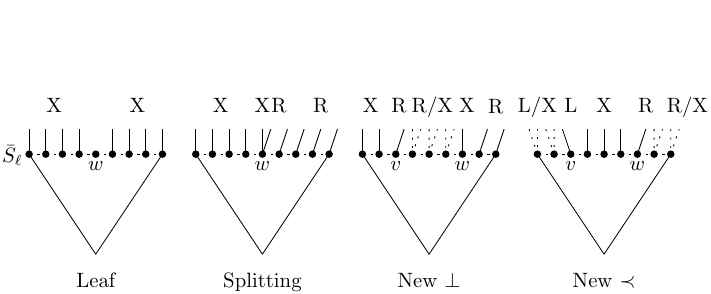}
		\caption{Possible levels in poset-diaries.}
		\label{fig:levels}
	\end{figure}
	See also Figure~\ref{fig:levels}.
\end{definition}

Given a countable partial order $\str{Q}$, we let $T(\str{Q})$ be the set of all poset-diaries $S$ such that $(S,\preceq)$ is isomorphic to $\str{Q}$.
\begin{example}
	Denote by $\str{A}_n$ the anti-chain  with $n$ vertices and by $\str{C}_n$
	the chain with $n$ vertices.
	\begin{align*}
		T(\str{A}_1)=T(\str{C}_1) & =\{\emptyset\},                       \\
		T(\str{A}_2)              & =\{\{\X\R,\R\X\X\},\{\X\R\X,\R\X\}\}, \\
		T(\str{C}_2)              & =\{\{\X\L,\R\R\X\},\{\X\L\X,\R\R\}\}.
	\end{align*}
	\begin{figure}
		\includegraphics[page=1]{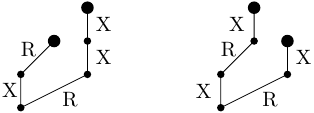}
		\caption{Diaries of $\str{A}_2$.}
		\label{fig:types1}
	\end{figure}
	\begin{figure}
		\includegraphics[page=2]{small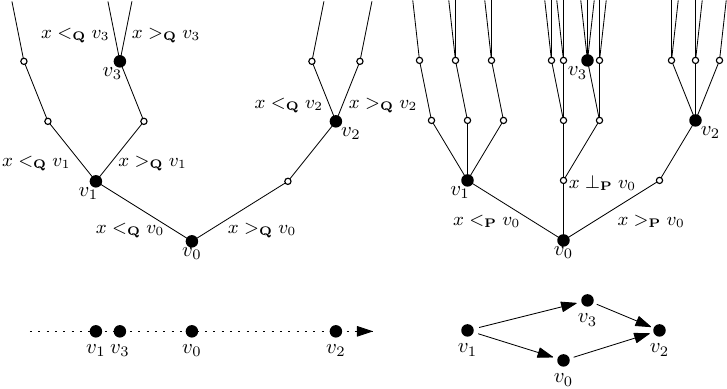}
		\caption{Poset-diaries of $\str{C}_2$.}
		\label{fig:types2}
	\end{figure}
	In all diaries in $T(\str{A}_2)\cup T(\str{C}_2)$, level 0 is splitting, level 1 adds a new $\perp$ or $\prec$, and levels 2 and 3 are leaves.

	Poset-diaries of small partial orders can be determined by an exhaustive search
	tool.\footnote{\url{https://github.com/janhubicka/big-ramsey}}
		 We determined that $|T(\str{C}_3)|=52$, $|T(\str{C}_4)|=11000$,
	$|T(\str{A}_3)|=84$, and $|T(\str{A}_4)|=75672$.  Overall there are:
	\begin{enumerate}
		\item 1 poset-diary of the (unique) partial order of size 1: $S=\{\emptyset\}$,
		\item 4 poset-diaries of partial orders of size 2: $T(\str{A}_2)\cup T(\str{C}_2)$,
		\item 464 poset-diaries of partial orders of size 3,
		\item 1874880 poset-diaries of partial orders of size 4.
	\end{enumerate}
\end{example}
As our main result, we determine the big Ramsey degrees of $\str{P}$ and show that $\str{P}$ admits a big Ramsey structure; while we refer to \cite{zucker2017} for the precise definition (see also \cite{hubicka2024survey}), a big Ramsey structure for $\str{P}$ is an expansion $\str{P}^*$ of $\str{P}$ which encodes the exact big Ramsey degrees for all finite substructures simultaneously in a coherent fashion. 

\begin{theorem}
\label{thm:main}
	For every finite partial order $\str{Q}$, the big Ramsey degree of $\str{Q}$ in the generic partial order $\str{P}$ equals $|T(\str{Q})|\cdot |\mathrm{Aut}(\str{Q})|$. Furthermore, any $\str{P}^*\in T(\str{P})$ encodes a big Ramsey structure for $\str{P}$. Consequently, the topological group $\mathrm{Aut}(\str{P})$ admits a metrizable universal completion flow.
\end{theorem}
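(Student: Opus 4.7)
The plan is to identify each embedding $\str{Q}\hookrightarrow \str{P}$ with a poset-diary recording its ``shape'' in a canonical tree-like coding of $\str{P}$ by words in $\Sigma^*$, and then establish matching upper and lower bounds on the number of these shapes. Section~\ref{sec:1types} will provide the coding: fixing an enumeration of $\str{P}$, each element $p$ receives a word $w(p)\in\Sigma^*$ whose $i$-th letter is $\L$, $\X$, or $\R$ according to whether the $i$-th enumerated element is below, incomparable to, or above $p$. By Proposition~\ref{prop:pos}, the induced $\preceq$-relation on $\{w(p):p\in P\}$ agrees with $\leq_\str{P}$. The \emph{shape} of an embedding $f\colon\str{Q}\to\str{P}$ is then the poset-diary generated by $\{w(f(q)):q\in Q\}$, read off level by level. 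The factor $|\mathrm{Aut}(\str{Q})|$ in the bound accounts for the difference between unlabelled shapes (counted by $|T(\str{Q})|$) and labelled embeddings (counted by the big Ramsey degree).

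For the upper bound I would follow and refine the Carlson--Simpson approach of \cite{Hubicka2020CS}. Given a colouring of $\binom{\str{P}}{\str{Q}}$, one processes the coding tree level by level, at each level invoking the Carlson--Simpson theorem on a suitably structured sequence of extensions to reduce the colouring to one that depends only on shape. The four-case Definition~\ref{def:posetdiary} enumerates exactly the ``critical events'' allowed at a single level --- a leaf terminating, a node splitting, a new $\perp$-relation appearing, or a new $\prec$-relation appearing --- each with geometric side-conditions \ref{A2}, \ref{B1}, \ref{B2} needed for realizability inside a coded copy of $\str{P}$. Verifying that on a suitable subcopy of $\str{P}$ the colour of each embedding depends only on its shape then delivers the upper bound $|T(\str{Q})|\cdot|\mathrm{Aut}(\str{Q})|$.

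For the lower bound, which is the main novelty, I would show that no two distinct labelled shapes can be collapsed. Colour each embedding of $\str{Q}$ by its shape together with a choice of isomorphism to $\str{Q}$, and suppose for contradiction that some subcopy of $\str{P}$ avoids a particular pair of distinct labelled shapes with representatives $S,S'\in T(\str{Q})$. The iterated-upper-bound trick is to engineer a finite envelope $\str{R}\supseteq\str{Q}$ whose own diary $S_\str{R}\in T(\str{R})$ exhibits both $S$ and $S'$ as the shapes of two distinguished sub-copies of $\str{Q}$ inside $\str{R}$, and then apply the already-proved upper bound for $\str{R}$ inside the assumed monochromatic subcopy of $\str{P}$. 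That upper bound produces a further subcopy on which every embedding of $\str{R}$ realises $S_\str{R}$, and therefore simultaneously realises both $S$ and $S'$ on its two distinguished sub-copies of $\str{Q}$, contradicting the assumption. The hard part will be the combinatorial engineering of $\str{R}$ for each pair $(S,S')$: one must show that any two diaries in $T(\str{Q})$ embed as sub-diaries of a common diary of some larger poset, and this is where the precise calibration of the side-conditions \ref{A2}, \ref{B1}, \ref{B2} in Definition~\ref{def:posetdiary} is indispensable.

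Finally, for the big Ramsey structure and the flow, observe that any $\str{P}^*\in T(\str{P})$ determines the shape of every finite substructure of $\str{P}$ by restriction, and by the matching bounds above this shape is precisely the invariant distinguishing embeddings under every colouring on a copy of $\str{P}$. This coherence identifies $\str{P}^*$ as a big Ramsey structure for $\str{P}$ in the sense of \cite[Definition~1.3]{zucker2017}, and the existence of a metrizable universal completion flow for $\mathrm{Aut}(\str{P})$ then follows from Zucker's general correspondence between big Ramsey structures and completion flows, completing the three assertions of Theorem~\ref{thm:main} in one package.
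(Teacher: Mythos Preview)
Your upper-bound sketch and the final paragraph on big Ramsey structures and completion flows track the paper's own route: a level-by-level Carlson--Simpson argument yielding a Ramsey theorem for shape-preserving maps (the paper's Theorem~\ref{thm:multCS} and Corollary~\ref{cor:big_ramsey}), and then an appeal to Zucker's correspondence.

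The lower bound, however, is where your proposal diverges from the paper and has a genuine gap. You propose to engineer a finite envelope $\str{R}\supseteq\str{Q}$ with a diary $S_\str{R}$ containing both $S$ and $S'$ as sub-diaries, and then to apply the upper bound for $\str{R}$ inside the offending subcopy. But the upper bound for $\str{R}$ only says that on a further subcopy the colour of every $\str{R}$-embedding depends on its shape; it says nothing about \emph{which} shapes are realised there. Your sentence ``that upper bound produces a further subcopy on which every embedding of $\str{R}$ realises $S_\str{R}$'' is precisely the missing step: knowing that the shape $S_\str{R}$ is attained in every copy of $\str{P}$ is the lower bound for $\str{R}$, so the argument is circular. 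Amalgamating two diaries into a common envelope does not by itself force either one to appear.

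The paper's ``iterated upper bound'' is not an envelope construction on larger finite posets at all. Instead it shows directly that the shape colouring is \emph{recurrent}: for every self-embedding $f$ of $\str{P}$ there is $\phi$ with $\chi_\str{A}\circ f\circ\phi=\chi_\str{A}$ (Theorem~\ref{thm:embthm}). The iteration is of the pigeonhole Lemma~\ref{lem:pigeonhole}, applied to the arbitrary (not shape-preserving) embedding $\psi\circ f\circ\eta\colon(\Sigma^*,\preceq)\to(\Sigma^*,\preceq)$: one stabilises it level by level to produce a shape-preserving $g$ and levels $(N_i)$ so that $f\circ g$ sends words agreeing on the first $\ell$ letters to words agreeing on the first $N_\ell$ letters (Lemma~\ref{lem:canonical}). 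After precomposing with the letter-tripling map $d$, the composite preserves level structures (Lemma~\ref{lem:ambpreserving}) and hence preserves every sub-diary, so every labelled shape is realised inside $f[\str{P}]$. The point is that the pigeonhole lemma is applied not to colourings of finite substructures but to colourings recording initial segments of $f$-images, thereby canonising an arbitrary self-embedding rather than bootstrapping from a larger finite object.
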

Note that the number of poset-diaries is multiplied by the size of the automorphism group since we
define big Ramsey degrees with respect to embeddings (as done, for example, in~\cite{zucker2017}).  
 Big Ramsey degrees are often
defined with respect to substructures (see, for example, \cite{devlin1979,Laflamme2006,larson2008counting,NVT2008,laflamme2010partition}) and in that case the degree would be $|T(\str{Q})|$.
\begin{example}
	\begin{align*}
		|T(\str{A}_1)|\cdot |\mathrm{Aut}(\str{A}_1)|=|T(\str{C}_1)|\cdot |\mathrm{Aut}(\str{C}_1)| & =1,       \\
		|T(\str{A}_2)|\cdot |\mathrm{Aut}(\str{A}_2)|                                               & =4,       \\
		|T(\str{C}_2)|\cdot |\mathrm{Aut}(\str{C}_2)|                                               & =2,       \\
		|T(\str{A}_3)|\cdot |\mathrm{Aut}(\str{A}_3)|                                               & =504,     \\
		|T(\str{C}_3)|\cdot |\mathrm{Aut}(\str{C}_3)|                                               & =52,      \\
		|T(\str{A}_4)|\cdot |\mathrm{Aut}(\str{A}_4)|                                               & =1816128, \\
		|T(\str{C}_4)|\cdot |\mathrm{Aut}(\str{C}_4)|                                               & =11000.   \\
	\end{align*}
\end{example}

\section{Preliminaries}\label{sec:preliminaries}
\subsection{Relational structures}
We use the standard model-theoretic notion of a relational structure.
Let $L$ be a language with relational symbols $\rel{}{}\in L$ each equipped with a positive natural number called its {\em arity}.
An \emph{$L$-structure} $\str{A}$ on $A$ is a structure with {\em vertex set} $A$ and  relations $\rel{A}{}\subseteq A^r$ for every symbol $\rel{}{}\in L$ of arity $r$.  We typically use bold letters for structures and the corresponding unbolded letters for the underlying set unless otherwise specified. If the set $A$ is finite, countable, countably infinite, etc., we call $\str A$ a \emph{finite}, \emph{countable}, or \emph{countably infinite structure}, respectively. All structures that we consider are countable.

Since we work with structures in multiple languages, we will list the vertex
set along with the relations of the structure, e.g., $(P,\leq)$ for partial orders.
\subsection{Trees}
For us, a \emph{tree} is a (possibly empty) partially ordered set $(T, <_T)$ such
that, for every $t \in T$, the set $\{s \in T : s <_T t \}$ is finite and linearly ordered by $<_T$.
All nonempty trees we consider are \emph{rooted}, that is, they have a unique minimal element called the \emph{root} of the tree.
An element $t\in T$ of a tree $T$ is called a \emph{node} of $T$ and its \emph{level},
denoted by $\card{t}_T$, is the size of the set $\{s \in T : s <_T t\}$.
Note that the root has level~0.
Given a tree $T$ and nodes $s, t \in T$, we say that $s$ is a \emph{successor} of $t$ in $T$ if $t \mathbin{\leq_T} s$.
A \emph{subtree} of a tree $T$ is a subset $T'$ of $T$ viewed as a tree
equipped with the induced partial ordering.

Given words $w,w'\in \Sigma^*$, we write $w\sqsubseteq w'$ if $w$ is an initial segment of $w'$. With this partial order we obtain the tree $(\Sigma^*,\sqsubseteq)$
and the notation on words introduced in Section~\ref{sec:introduction} can be viewed as a special case
of the notation introduced here.
\subsection{Parameter words}
To obtain upper bounds for the big Ramsey degrees of $\str{P}$, we apply a Ramsey theorem for parameter words which we briefly review now.

Given a finite alphabet $\Alphabet$ and $k\in \omega\cup \{\omega\}$, a \emph{$k$-parameter word} is a (possibly infinite) string $W$ in the
alphabet $\Alphabet\cup \{\lambda_i\colon 0\leq i<k\}$ containing all symbols $\lambda_i$, $0\leq i < k$ such that, for every $1\leq j < k$, the first
occurrence of $\lambda_j$ appears after the first occurrence of $\lambda_{j-1}$.
The symbols $\lambda_i$ are called \emph{parameters}.
We will use uppercase letters to denote parameter words and lowercase letters for  words
without parameters.
Let $W$ be an $n$-parameter word and let $U$ be a parameter word of length $k\leq n$, where $k,n\in \omega\cup\{\omega\}$. Then
$W(U)$ is the parameter word created by \emph{substituting} $U$ to $W$. More precisely, $W(U)$ is created from~$W$ by replacing each occurrence of $\lambda_i$, $0\leq i < k$, by $U_i$ and truncating it just
before the first occurrence of $\lambda_k$ in $W$.

We apply the following Ramsey theorem for parameter words, which is  an easy consequence of the Carlson--Simpson theorem\cite{carlson1984}, see also \cite{todorcevic2010introduction,Karagiannis2013}:
\begin{theorem}
	\label{thm:CS}
	Let $\Alphabet$ be a finite alphabet.
	If $\Alphabet^*$ is coloured with finitely many colours, then there exists an infinite-parameter word
	$W$ such that $$W[\Alphabet^*]=\{W(s)\colon s\in \Alphabet^*\}$$ is monochromatic.
\end{theorem}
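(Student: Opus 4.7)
The plan is to separate the theorem into three parts: the matching upper bound and lower bound on the big Ramsey degree (which together establish the first assertion), and the deduction that any $\str{P}^*\in T(\str{P})$ is a big Ramsey structure (from which the existence of a metrizable universal completion flow follows by Zucker's theorem from \cite{zucker2017}).

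For the upper bound, I would begin by realising $\str{P}$ as a subset of $\Sigma^*$ via the 1-type enumeration described in Section~\ref{sec:1types}, so that $\preceq$ on $\Sigma^*$ restricts to the partial order of $\str{P}$ and $\lexleq$ gives a fixed enumeration. A colouring $\chi$ of $\binom{\str{P}}{\str{Q}}$ then becomes a colouring of certain configurations of words. The strategy, refining \cite{Hubicka2020CS}, is to iterate Theorem~\ref{thm:CS} along levels: at each stage one enriches the colouring with auxiliary data recording how the chosen words fit into the Shape $\Shape{\cdot}{\cdot}$ (level pattern and comparability with past vertices), applies Carlson--Simpson to a suitable parameter word, and substitutes to obtain a subcopy of $\Sigma^*$ on which $\chi$ factors through the isomorphism type of the configuration. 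The key combinatorial fact to prove is that after this stabilisation, every copy of $\str{Q}$ determines (i) a well-defined element $S\in T(\str{Q})$ describing its level-by-level history, and (ii) a specific embedding $\str{Q}\to(S,\preceq)$, i.e.\ an element of $\mathrm{Aut}(\str{Q})$. This gives at most $|T(\str{Q})|\cdot|\mathrm{Aut}(\str{Q})|$ surviving colours.

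For the lower bound, for each pair $(S,\sigma)\in T(\str{Q})\times\mathrm{Aut}(\str{Q})$ I would assign a colour to every embedding $f\colon\str{Q}\to\str{P}$ by reading off the shape of the image together with the automorphism class of $f$. The goal is then to show that in every embedding $g\colon\str{P}\to\str{P}$, each of the $|T(\str{Q})|\cdot|\mathrm{Aut}(\str{Q})|$ colours is realised in $\binom{g(\str{P})}{\str{Q}}$. Here I would employ the new iterated upper-bound technique hinted at in the introduction: suppose a colour $(S,\sigma)$ is missed; build an auxiliary colouring of embeddings of a carefully chosen finite poset $\str{Q}'\supsetneq\str{Q}$ that, via the upper bound just proved, must stabilise on some subcopy of $\str{P}$; then decode the forced shape in $T(\str{Q}')$ to exhibit a copy of $\str{Q}$ with the missing shape $(S,\sigma)$, contradicting the assumption. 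The conditions \ref{A2}, \ref{B1}, \ref{B2} in Definition~\ref{def:posetdiary} will be essential here: they are precisely the obstructions needed to make a proposed diary actually realisable inside $\str{P}$.

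For the structural consequence, having fixed the big Ramsey degree, the expansion of $\str{P}$ given by assigning each vertex its word in some $\str{P}^*\in T(\str{P})$ records, for any finite substructure, its shape as an element of $T(\str{Q})\times\mathrm{Aut}(\str{Q})$. Because the count of these shapes equals the big Ramsey degree of $\str{Q}$, the expansion $\str{P}^*$ is big Ramsey in the sense of \cite[Definition 1.3]{zucker2017}, and the existence of a metrizable universal completion flow for $\mathrm{Aut}(\str{P})$ is then the direct application of the main theorem of \cite{zucker2017}. The main obstacle is the lower bound: the self-referential application of the upper-bound theorem requires a subtle choice of auxiliary poset $\str{Q}'$ that simultaneously encodes the missing diary and survives the Carlson--Simpson stabilisation, and verifying that axioms \ref{A2}--\ref{B2} exactly characterise realisable diaries (rather than providing only a necessary condition) is the delicate combinatorial heart of the argument.
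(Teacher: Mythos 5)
Your proposal does not address the statement at all. The statement you were asked to prove is Theorem~\ref{thm:CS}, the Ramsey theorem for parameter words over a finite alphabet: for any finite colouring of $\Alphabet^*$ there is an infinite-parameter word $W$ with $W[\Alphabet^*]$ monochromatic. What you have written is instead an outline of a proof of the paper's main result, Theorem~\ref{thm:main} (the exact big Ramsey degrees of the generic partial order and the existence of a big Ramsey structure), and in the course of that outline you \emph{invoke} Theorem~\ref{thm:CS} as a black box (``applies Carlson--Simpson to a suitable parameter word\dots''). So with respect to the statement under review, your text contains no argument whatsoever: no analysis of parameter words, no substitution $W(U)$, no combinatorial mechanism that would produce the monochromatic infinite-parameter word. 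Using the statement as a tool inside a proof of a different theorem is not a proof of the statement.

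For comparison, the paper does not prove Theorem~\ref{thm:CS} either; it is quoted as an easy consequence of the Carlson--Simpson theorem of \cite{carlson1984} (see also \cite{todorcevic2010introduction,Karagiannis2013}). A legitimate blind proof attempt would therefore either derive the stated form from the Carlson--Simpson dual Ramsey/left-variable word theorem (e.g.\ by iterating the one-parameter pigeonhole and diagonalising to build the infinite-parameter word level by level), or give a direct infinitary argument in the style of Hales--Jewett/Graham--Rothschild with a compactness or fusion step. Your proposal does neither, so it must be judged as missing the target entirely, independent of whether its sketch of Theorem~\ref{thm:main} matches the paper's actual strategy.
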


\section{Tree of 1-types}\label{sec:1types}

Poset-diaries, which can be compared to \emph{Devlin
	embedding types} (see Chapter 6.3 of \cite{todorcevic2010introduction}) or \emph{diaries} introduced in \cite{Balko2021exact}, have an intuitive meaning when
understood in the context of the tree of 1-types of $\str{P}$. We now introduce this tree and its enrichment to an \emph{aged coding tree} and discuss how poset-diaries can be obtained as a suitable abstraction of the aged coding tree. 

An \emph{enumerated structure} is simply a structure $\str{A}$ with underlying set $A = |A|$. Fix  a countably infinite enumerated structure $\str{A}$.  Given vertices $u,v$ and an integer $n$ satisfying $\min(u,v)\geq n\geq 0$, we write $u\sim^\str{A}_n v$ and say that \emph{$u$ and $v$ have the same (quantifier-free) type over $\{0,1,\ldots,n-1\}$}, if the structures induced by $\str{A}$ on $\{0,1,\ldots, n-1,u\}$ and $\{0,1,\ldots, n-1,v\}$ are isomorphic via the map which is the identity on $\{0,...,n-1\}$ and sends $u$ to $v$. We write $[u]^\str{A}_n$ for the $\sim^\str{A}_n$-equivalence class of the vertex $u$.
\begin{definition}[Tree of 1-types]
	Let $\str{A}$ be a countably infinite (relational) enumerated structure. Given $n< \omega$, write $\bb{T}_\str{A}(n) = \omega/\!\sim^{\str{A}}_n$. A (quantifier-free) \emph{1-type} is any member of the disjoint union $\bb{T}_\str{A}:=\bigsqcup_{n<\omega} \bb{T}_\str{A}(n)$. We turn $\bb{T}_\str{A}$ into a tree as follows. Given $x\in \bb{T}_\str{A}(m)$ and $y\in \bb{T}_\str{A}(n)$, we declare
	that $x\leq^{\mathbb T}_{\str{A}} y$ if and only if $m\leq n$ and $x\supseteq y$. 
        We will denote by $[n]^\str{A}_n$ the equivalence class of $n$ under $\sim^{\str{A}}_n$.
	
	In the case that we have a \fr class $\cal{K}$ in mind (which for us will always be the class of finite partial orders), we can extend the definition to a finite enumerated $\str{A}\in \cal{K}$ as follows. Fix an enumerated \fr limit $\str{K}$ of $\cal{K}$ which has $\str{A}$ as an initial segment. We then set $\bb{T}_\str{A} = \bb{T}_\str{K}({<}|\str{A}|)$. This does not depend on the choice of $\str{K}$.
\end{definition}

In the case that $\str{A}$ is a structure in a finite binary relational language, we can encode $\bb{T}_{\str{A}}$ as a subtree of $k^{<\omega}$ for some $k< \omega$ as follows. Given two enumerated structures $\str{B}$ and $\str{C}$, an \emph{ordered embedding} of $\str{B}$ into $\str{C}$ is any embedding of $\str{B}$ into $\str{C}$ which is an increasing injection of the underlying sets. Write $\mathrm{OEmb}(\str{B}, \str{C})$ for the set of ordered embeddings of $\str{B}$ into $\str{C}$. Fix once and for all an enumeration $\{\str{B}_i: i< k\}$ of the set of enumerated structures of size $2$ which admit an enumerated embedding into $\str{A}$. Given $x\in \bb{T}_\str{A}(m)$, we define $\sigma(x)\in k^m$ (a word in alphabet $\{0,1,\ldots,k-1\}$ of length $m$) where given $j< m$, we set $\sigma(x)_j = i$ (recall that we use subscripts to index letters in a word) if and only if for some (equivalently every) $n\in x$ there is $f\in \mathrm{OEmb}(\str{B}_i, \str{A})$ with $\mathrm{Im}(f) = \{j, n\}$. The map $\sigma\colon \bb{T}_\str{A}\to k^{< \omega}$ is then an embedding of trees. We write $\mathrm{CT}^\str{A} = \sigma[\bb{T}_\str{A}]$ and call this the \emph{coding tree} of $\str{A}$. Typically we also endow $\mathrm{CT}^\str{A}$ with \emph{coding nodes}, where for each $n$, the $n^{\mathrm{th}}$ coding node of $\mathrm{CT}^\str{A}$ is defined to be $c^\str{A}(n) := \sigma([n]^\str{A}_n)$.

The tree of 1-types of a given structure is useful for constructing  unavoidable colourings as well as for proving upper bounds on big Ramsey degrees; see for instance 
\cite{Laflamme2006,
dobrinen2017universal,
dobrinen2019ramsey,
coulson2022indivisibility,coulson2022SDAP,zucker2020,Hubicka2020CS,Hubickabigramsey,Hubicka2020uniform,Balko2021exact,dobrinen2020exposition,dobrinen2020forcing,hubicka2024survey,hubicka2025twenty}.
We therefore fix an enumerated generic partial order $\str{P}$, and we put $(\mathbb T,\leq_\mathbb T\nobreak )=(\mathbb T_\str{P},\leq^\mathbb T_\str{P})$ and $\mathrm{CT} = \mathrm{CT}^\str{P}$. Noting that $\bP$ is a binary relational structure and that there are exactly $3$ enumerated posets of size $2$, we have $k = 3$; by identifying the symbols $\{\L, \X, \R\}$ with $\{0, 1, 2\}$, we can identify $\mathrm{CT}$ as a subtree of $(\Sigma^*, \sqsubseteq)$. More concretely, given $x\in \bb{T}(m)$ and $j< m$, we have:
$$\sigma(x)_j=\begin{cases}\L & \hbox{if $a\mathbin{<_\str{P}}j$ for every (some) $a\in x$,} \\
	\R & \hbox{if $j\mathbin{<_\str{P}}a$ for every (some) $a\in x$,} \\
             \X & \hbox{otherwise.}
	\end{cases}
$$
	\begin{figure}
		\includegraphics{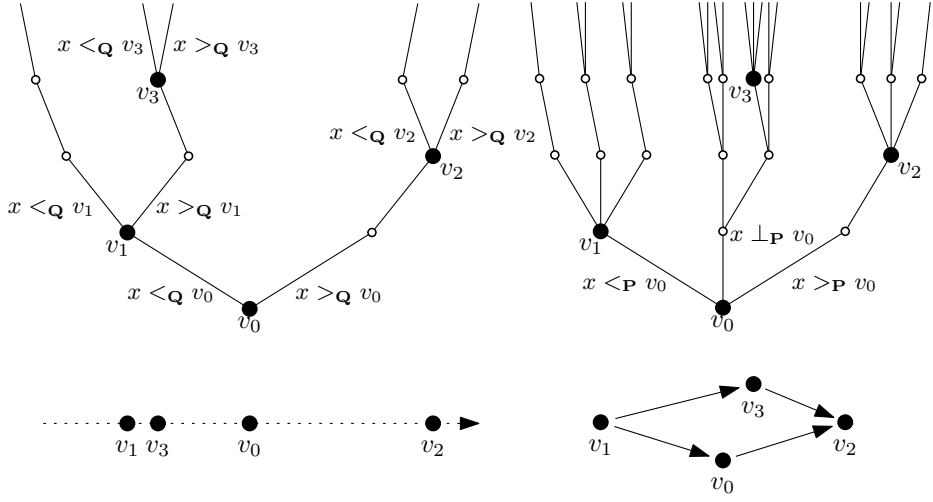}
		\caption{Initial part of the tree of types of an enumerated linear order $(\str{Q},\leq_\str{Q})$ (left) and of the enumerated partial order $\str{P}$ (right). The bold node on each level corresponds to the coding node.}
		\label{fig:types}
	\end{figure}

We remark that $\ct$ is a proper subset of $\Sigma^*$, no matter the enumeration we choose.  For example, if $\L\R\in \ct$, then $\R\L\notin \ct$, since that would imply the existence of vertices $a,b\in \str{P}$ such that $a\mathbin{<_\str{P}} 0\mathbin{<_\str{P}}b$ and $b<\mathbin{_\str{P}} 1\mathbin{<_\str{P}}a$, contradicting the fact that $\str{P}$ is a partial order.

The relations $\prec$, $\lexlt$ and $\perp$, introduced in Section~\ref{sec:introduction}, capture the following properties of types:
\begin{proposition}
	\label{prop:1typerel}
	Let $x\in \bb{T}(m)$ and $y\in \bb{T}(n)$ be 1-types of $\str{P}$.
	\begin{enumerate}[label=(\arabic*)]
		\item\label{prop:1typerel:p0} If there exist $a\in x$ and $b\in y$ satisfying $a\mathbin{<_\str{P}} b$, then for every $\ell<\min(m,n)$ it holds that $\sigma(x)_\ell\lleq \sigma(y)_\ell$.
		\item\label{prop:1typerel:p1} If $\sigma(x)\prec \sigma(y)$, then for every $a\in x$ and $b\in y$ it holds that $a\mathbin{<_\str{P}} b$.
		\item\label{prop:1typerel:p2} If $\sigma(x)\lexlt \sigma(y)$, then for every $a\in x$ and $b\in y$ it holds that $b\mathbin{\not <_\str{P}} a$.
		\item\label{prop:1typerel:p3} If $m = n$ and $\sigma(x)\perp \sigma(y)$, then for every $a\in x$ and $b\in y$ it holds that $a$ and $b$ are $\leq_\str{P}$ incomparable.
	\end{enumerate}
\end{proposition}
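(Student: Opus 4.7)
The plan is to derive each clause directly from the definition of $\sigma$ in Section~\ref{sec:1types}, with the argument reducing throughout to transitivity of $\leq_\str{P}$ through the enumeration vertices. For item~(1), I would fix $\ell<\min(m,n)$ and argue by case analysis on $(\sigma(x)_\ell,\sigma(y)_\ell)\in\{\L,\X,\R\}^2$: the three configurations violating $\sigma(x)_\ell\lexleq\sigma(y)_\ell$ are $(\R,\X)$, $(\R,\L)$, and $(\X,\L)$, and each contradicts $a<_\str{P} b$ via transitivity with the vertex~$\ell$. For instance, $(\R,\L)$ would give $\ell<_\str{P} a<_\str{P} b<_\str{P} \ell$, impossible in a partial order; the remaining two cases immediately force either $b<_\str{P}\ell$ (hence $\sigma(y)_\ell=\L$) or $\ell<_\str{P} a$ (hence $\sigma(x)_\ell=\R$), contradicting the assumed values. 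For item~(2), the definition of $\sigma(x)\prec\sigma(y)$ furnishes a witness index~$i$ with $\sigma(x)_i=\L$ and $\sigma(y)_i=\R$, which unpacks to: every $a\in x$ satisfies $a<_\str{P} i$ and every $b\in y$ satisfies $i<_\str{P} b$, so $a<_\str{P} b$ by transitivity.

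Items~(3) and~(4) then follow by contraposition using~(1). For~(3), suppose $b\leq_\str{P} a$ for some $a\in x$, $b\in y$; the case $a=b$ would force $\sigma(x)$ and $\sigma(y)$ to be $\sqsubseteq$-comparable and to agree on all their common positions, contradicting the existence of an index inside $\min(m,n)$ where they first differ lexicographically, so in fact $b<_\str{P} a$ strictly. Applying~(1) with the roles of $x,y$ swapped then yields $\sigma(y)_\ell\lexleq\sigma(x)_\ell$ for every $\ell<\min(m,n)$, directly contradicting $\sigma(x)\lexlt\sigma(y)$. For~(4), with $m=n$ and $\sigma(x)\perp\sigma(y)$: the case $a=b$ gives $x=y$, hence $\sigma(x)=\sigma(y)$, contradicting $\perp$; and each of $a<_\str{P} b$ and $b<_\str{P} a$ produces via~(1) a one-sided pointwise $\lexleq$ between $\sigma(x)$ and $\sigma(y)$, contradicting $\eltleq$-incomparability.

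The main obstacle here is purely bookkeeping the case analysis in~(1); conceptually everything reduces to the definition of $\sigma$ together with transitivity and antisymmetry of $\leq_\str{P}$, so I do not anticipate any genuine difficulty, and in fact once (1) is dispatched the remaining items drop out almost immediately.
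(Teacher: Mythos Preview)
Your proposal is correct and follows essentially the same route as the paper: item~(1) by case analysis on the three pairs $(\sigma(x)_\ell,\sigma(y)_\ell)$ violating the conclusion, item~(2) directly from a witness index for $\prec$, and items~(3) and~(4) by contraposition via~(1). The only cosmetic difference is that you treat the $a=b$ subcases in (3) and (4) explicitly, whereas the paper silently passes to the strict relation.
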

\begin{proof}
	We first verify~\ref{prop:1typerel:p0} by contrapositive. Assume there is $\ell<\min(m,n)$ such that $\sigma(y)_\ell \llt \sigma(x)_\ell$.
	First consider the case that $(\sigma(x)_\ell,\sigma(y)_\ell)=(\X,\L)$. It follows for any $a\in x$ and $b\in y$ that $a$ is $\leq_\str{P}$-incomparable with $\ell$ and $b\mathbin{<_\str{P}} \ell$. It follows that we cannot have $a\mathbin{<_\str{P}} b$. The arguments in the cases $(\sigma(x)_\ell,\sigma(y)_\ell)=(\R,\L)$ and $(\sigma(x)_\ell,\sigma(y)_\ell)=(\R,\X)$ are similar.

	To see \ref{prop:1typerel:p1}, observe that $\sigma(x)\prec \sigma(y)$ implies the existence of a vertex $\ell\in \str{P}$ satisfying $\ell<\min(m,n)$ and $(\sigma(x)_\ell,\sigma(y)_\ell)=(\L,\R)$. It follows that for any $a\in x$ and $b\in y$ we have $a\mathbin{<_\str{P}} \ell\mathbin{<_\str{P}} b$.

	To verify~\ref{prop:1typerel:p2}, observe that there exists $\ell<\min(m, n)$ such that $\sigma(x)_\ell\llt \sigma(y)_\ell$. Thus we cannot have $a\in x$, $b\in y$ such that $b\mathbin{<_\str{P}} a$, as this would contradict~\ref{prop:1typerel:p0}.

	Finally to verify~\ref{prop:1typerel:p3}, observe that $\sigma(x)\perp \sigma(y)$ implies the existence of vertices $k, \ell\in \str{P}$ satisfying $\max(k, \ell)<\min(m,n)$, $\sigma(x)_k\llt \sigma(y)_k$ and $\sigma(y)_\ell\llt \sigma(x)_\ell$.
	Hence the existence of $a\in x$ and $b\in y$ with either $a\mathbin{<_\str{P}} b$ or $b\mathbin{<_\str{P}} a$ contradicts~\ref{prop:1typerel:p0}.
\end{proof}

The main difficulty in working with the tree $(\mathbb T,\leq_\mathbb T)$ is the fact that it depends on the choice of an enumeration of $\str{P}$.
For this reason we will focus on the tree $(\Sigma^*,\sqsubseteq)$ which can be seen as an amalgamation of all possible trees $(\mathbb T,\leq_\mathbb T)$ constructed
using all possible enumerations of $\str{P}$.
The next definition captures the main properties of words in $\ct$ which are independent of the choice of enumeration of $\str{P}$.
\begin{definition}[Compatibility]
	\label{defn:comp}
	Words $u\lexleq v\in \Sigma^*$ are \emph{compatible} if the following two conditions are
	satisfied:
	\begin{enumerate}[label=(\arabic*)]
		\item\label{defn:comp:p1} there is no $\ell<\min(|u|,|v|)$ such that $(u_\ell,v_\ell)=(\R,\L)$, and
		\item\label{defn:comp:p2} if there exists $\ell'<\min(|u|,|v|)$ such that $(u_{\ell'},v_{\ell'})=(\L,\R)$, then for every $\ell''<\min(|u|,|v|)$ it holds that $u_{\ell''}\lleq v_{\ell''}$.
	\end{enumerate}
\end{definition}
Intuitively, the words $u$ and $v$ are compatible if and only if they can appear in a coding tree of some enumeration of $\str{P}$.  If there exists $\ell<\min(|u|,|v|)$ such that $(u_\ell,v_\ell)=(\R,\L)$, then in this enumeration we must have $|u|\prec \ell\prec |v|$.  Failures of condition~\ref{defn:comp:p2} of Definition~\ref{defn:comp} contradict transitivity of $\str{P}$.
\begin{proposition}
	Every $s, t\in \ct$ are compatible.
\end{proposition}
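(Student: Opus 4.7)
The plan is to reduce both clauses of Definition~\ref{defn:comp} directly to Proposition~\ref{prop:1typerel}. Any $s,t\in\ct$ arise as $s=\sigma(x)$ and $t=\sigma(y)$ for some 1-types $x\in\bb{T}(m)$, $y\in\bb{T}(n)$, and without loss of generality I may assume $s\lexleq t$ (the case $s=t$ renders both conditions vacuous, so I may in fact assume $s\lexlt t$). The strategy is to translate each forbidden or required letter pattern in the pair $(s,t)$ into an order-theoretic statement about representatives $a\in x$ and $b\in y$ via the defining cases of $\sigma$, and then invoke the matching item of Proposition~\ref{prop:1typerel}.

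For clause \ref{defn:comp:p1}, I argue by contradiction. Suppose some $\ell<\min(|s|,|t|)$ satisfies $(s_\ell,t_\ell)=(\R,\L)$. By the definition of $\sigma$, every $a\in x$ satisfies $\ell<_\str{P}a$ and every $b\in y$ satisfies $b<_\str{P}\ell$, so transitivity of $\leq_\str{P}$ yields $b<_\str{P}a$. But Proposition~\ref{prop:1typerel}\ref{prop:1typerel:p2}, applied to $\sigma(x)\lexlt\sigma(y)$, gives $b\nleq_\str{P}a$, a contradiction.

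For clause \ref{defn:comp:p2}, suppose some $\ell'<\min(|s|,|t|)$ witnesses $(s_{\ell'},t_{\ell'})=(\L,\R)$. The encoding then forces $a<_\str{P}\ell'<_\str{P}b$ for all $a\in x$ and $b\in y$, so in particular such a pair with $a<_\str{P}b$ exists. Proposition~\ref{prop:1typerel}\ref{prop:1typerel:p0} then delivers $\sigma(x)_{\ell''}\lexleq\sigma(y)_{\ell''}$ for every $\ell''<\min(m,n)$, which is precisely what clause \ref{defn:comp:p2} demands.

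There is no serious obstacle: the notion of compatibility was designed to capture exactly the enumeration-invariant consequences of $s,t$ encoding 1-types, so once Proposition~\ref{prop:1typerel} is in place, the verification is largely bookkeeping. The one subtle point worth flagging is that clause \ref{defn:comp:p1} ultimately relies on transitivity of $\leq_\str{P}$, reflecting the earlier remark that $\ct$ is a proper subset of $\Sigma^*$ precisely because arbitrary words in $\{\L,\X,\R\}^*$ could encode cycles forbidden by a partial order.
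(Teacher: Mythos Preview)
Your proof is correct and follows essentially the same approach as the paper's own proof: both verify clause~\ref{defn:comp:p1} by contradiction via Proposition~\ref{prop:1typerel}\ref{prop:1typerel:p2}, and both derive clause~\ref{defn:comp:p2} from Proposition~\ref{prop:1typerel}\ref{prop:1typerel:p0} after observing that $(\L,\R)$ at some coordinate forces $a<_\str{P}b$.
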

\begin{proof}
	Suppose $x, y\in \bb{T}$ are such that $\sigma(x) \lexlt \sigma(y)$.
	To see property \ref{defn:comp:p1} of Definition~\ref{defn:comp},
	suppose there were $\ell<\min(i,j)$ such that $(\sigma(x)_\ell,\sigma(y)_\ell)=(\R,\L)$. This implies that $b\mathbin{<_\str{P}} \ell \mathbin{\leq_\str{P}} a$ for every $a\in x$ and $b\in y$
	which contradicts Proposition~\ref{prop:1typerel}~\ref{prop:1typerel:p2}.

	Property \ref{defn:comp:p2} of Definition~\ref{defn:comp} is a consequence of Proposition~\ref{prop:1typerel}~\ref{prop:1typerel:p0} and the fact that the existence of
	$\ell'$ such that $(\sigma(x)_{\ell'},\sigma(y)_{\ell'})=(\L,\R)$ implies that $a\mathbin{<_\str{P}} \ell\mathbin{<_\str{P}} b$ for every $a\in x$ and $b\in y$.
\end{proof}
In \cite{zucker2020}, using ideas implicit in the parallel $1$'s of \cite{dobrinen2017universal,sauer1998} and pre-$a$-cliques of \cite{dobrinen2019ramsey}, levels of coding trees are endowed with the structure of \emph{aged sets}. This means that for every $m< \omega$, every set $S\subseteq \ct(m)$ is equipped with a class of finite $|S|$-labeled structures describing exactly which finite structures can be coded by coding nodes above the members of $S$. For the generic partial order, it will be useful to encode this information slightly differently than in \cite{zucker2020,Balko2021exact}, in particular since we want to do this on all of $\Sigma^*$, not just on $\ct$. 
\begin{definition}[Level structure]
	Given $\ell\geq 0$ and $S\subseteq \Sigma^*_\ell$, the \emph{level structure of $S$} is the structure $\str{S}=\AmbStr{S}$. 
\end{definition}
\begin{proposition}
	\label{prop:orders}
	For every $\ell>0$ and $S\subseteq \Sigma^*_\ell$, the level structure $\str{S}=\AmbStr{S}$ satisfies the following properties:
	\begin{enumerate}[label=(P\arabic*)]
		\item\label{P1} $(S,\preceq)$ is a partial order.
		\item\label{P2} $(S,\eltleq)$ is a partial order.
		\item\label{P3} $(S,\lexleq)$ is a linear order.
		\item\label{P4} For every $u,v\in S$ it holds that $u\preceq v\implies u\lexleq v$.\\ ($\lexleq$ is a linear extension of $\preceq$).
		\item\label{P5} For every $u,v\in S$ it holds that $u\eltleq v\implies u\lexleq v$.\\ ($\lexleq$ is a linear extension of $\eltleq$).
		\item\label{P6} For every $u,v,w\in S$ it holds that $u\preceq v\eltleq w\implies u\preceq w$ and $u\eltleq v\preceq w\implies u\preceq w$.
	\end{enumerate}
	Moreover if all words in $S$ are compatible then
	\begin{enumerate}[label=(P\arabic*),resume]
		\item\label{P7} For every $u,v\in S$ it holds that $u\preceq v\implies u\eltleq v$.
	\end{enumerate}
	\begin{proof}
		Properties \ref{P1} and \ref{P4} are Proposition~\ref{prop:pos}. \ref{P2}, \ref{P3}
		\ref{P5} and \ref{P6} follow directly from the definitions. \ref{P7} is Definition~\ref{defn:comp} \ref{defn:comp:p2}.
	\end{proof}
\end{proposition}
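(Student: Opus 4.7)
Most of the clauses are routine unpackings of definitions; I would organise the argument by grouping them as follows. Properties \ref{P1} and \ref{P4} are already exactly the content of Proposition~\ref{prop:pos}, so I would just cite it. Property~\ref{P3} is immediate because $(\Sigma,\lexleq)$ is a linear order, and the lexicographic extension of a linear order to strings of common length $\ell$ is again a linear order (the coordinate-wise approach yields trichotomy at the first differing coordinate). Property~\ref{P2} follows similarly: $\eltleq$ on $\Sigma^*_\ell$ is the $\ell$-fold product of $\lexleq$ on $\Sigma$, so reflexivity, antisymmetry and transitivity are inherited coordinate-wise.

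For \ref{P5}, given $u\eltleq v$ with $u\neq v$, I would take the least coordinate $i$ at which $u$ and $v$ differ; then $u_i\lexlt v_i$ while $u_j=v_j$ for $j<i$, which is precisely the definition of $u\lexlt v$. For \ref{P6}, I would use the witness formulation of $\prec$. Suppose $u\prec v$ with witness $i$, so $u_i=\L$, $v_i=\R$, and $u_j\lexleq v_j$ for all $j<i$. If additionally $v\eltleq w$, then $v_j\lexleq w_j$ at every coordinate; in particular $w_i=\R$ (because $\R$ is maximal in $\Sigma$), while $u_j\lexleq v_j\lexleq w_j$ for $j<i$ by transitivity of $\lexleq$. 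Hence $u\prec w$ with the same (or an earlier) witness. The case $u\eltleq v\prec w$ is symmetric, now using that $\L$ is minimal in $\Sigma$ to force $u_i=\L$ once $v_i=\L$ and $u_i\lexleq v_i$.

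Finally, for \ref{P7}, I would directly appeal to compatibility. Assuming $u\prec v$, Definition~\ref{def:posetdiary} (via the witness $i$) supplies coordinates $\ell'<\ell$ with $(u_{\ell'},v_{\ell'})=(\L,\R)$. By \ref{P4} we have $u\lexleq v$, so $u$ and $v$ are in the position required by Definition~\ref{defn:comp}, and clause~\ref{defn:comp:p2} of that definition yields $u_{j}\lexleq v_{j}$ for every $j<\ell$, i.e., $u\eltleq v$.

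The only nontrivial step is \ref{P6}, where one must be careful to use the extremal character of $\L$ and $\R$ in the alphabet to propagate the witness of $\prec$ through an $\eltleq$-comparison on both sides; everything else is formal verification from the definitions.
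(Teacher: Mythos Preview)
Your proposal is correct and follows essentially the same approach as the paper, which simply asserts that \ref{P1} and \ref{P4} are Proposition~\ref{prop:pos}, that \ref{P2}, \ref{P3}, \ref{P5}, \ref{P6} follow directly from the definitions, and that \ref{P7} is Definition~\ref{defn:comp}~\ref{defn:comp:p2}; you have merely unpacked these verifications explicitly. One small slip: in your argument for \ref{P7} you cite Definition~\ref{def:posetdiary} for the witness of $u\prec v$, but that is the definition of poset-diaries; you mean the definition of the partial order $(\Sigma^*,\preceq)$ from the introduction.
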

\begin{remark}
	\label{rem:approximation}
Level structures can be understood as approximations of a given partial order
with a given linear extension from below (using order $\preceq$) and from above
(using $\eltleq$).  This is a natural analog of the age-set structure in $\ct$.
\end{remark}

\begin{remark}
	One can, perhaps surprisingly, prove that the class $\mathcal K$ of all finite
	structures satisfying properties \ref{P1}, \ref{P2}, \ldots, \ref{P7} is an
	amalgamation class. As a consequence of the construction from Section~\ref{sec:typeP} one gets
	that for each structure $\str A\in\cal{K}$ there exists $\ell>0$ and $S\subseteq \Sigma^*_\ell$
	such that $\str{S}=\AmbStr{S}$ is isomorphic to $\str{A}$.
\end{remark}

\begin{remark}
	This interesting phenomenon of constructing a class of approximations (or, using the terminology of~\cite{zucker2020,Balko2021exact},  the class of all possible aged sets that can appear on some level of the coding tree) of a
	given amalgamation class exists in other cases. For binary free amalgamation classes, this approximation class corresponds to the union $\bigcup_{\rho} P(\rho)$, where the union is taken over all possible \emph{sorts} $\rho$ (see \cite{Balko2021exact} for the definitions). However, the theory of aged coding trees and the sets $P(\rho)$ can be defined for any strong amalgamation class in a finite binary language.
	Note that while for free amalgamation classes the set $P(\rho)$ is always closed under intersections, this need not be the case in general (indeed, it fails for posets).

	Another key difference between the free amalgamation case and partial orders is that for free amalgamation classes, we can enumerate the \fr limit in such a way so that going up and left (that is, by a non-relation) in the coding tree is a safe move, i.e.,\ is an embedding of the level structure from one level to another. Indeed, if this were true for the generic partial order and the coding tree $\ct$ we fixed earlier, one could prove upper bounds for the big Ramsey degrees using forcing arguments much as is done for the free amalgamation case in \cite{zucker2020}.  However, while a weakening of the idea of a ``safe direction" does hold for partial orders (see Proposition~\ref{prop:typeextend}), the proof of Lemma 3.4 from \cite{zucker2020} breaks in the setting of the generic partial order. However, it is possible that the coding tree Milliken theorem still holds. Below, $\mathrm{AEmb}(\ct^\str{A}, \ct)$ refers to the set of \emph{aged embeddings} of the coding tree $\ct^\str{A}$ into $\ct$, the strong similarity maps that respect coding nodes and level structures (see Definition 2.3 of \cite{zucker2020}).
\end{remark}

	\begin{question}
	    Fix a finite partial order $\str{A}$. Let $r< \omega$ and let $\gamma\colon \mathrm{AEmb}(\ct^\str{A}, \ct)\to r$ be a colouring. Is there $h\in \mathrm{AEmb}(\ct, \ct)$ such that $h\circ \mathrm{AEmb}(\ct^\str{A}, \ct)$ is monochromatic? 
	\end{question}

\section{Poset-diaries and level structures}
Given a poset-diary $S$ (Definition~\ref{def:posetdiary}), one can view $\overline{S}$ as a binary branching tree and each level $\overline{S}_\ell$ as a structure $\str{S}_\ell=\AmbStr{\overline{S}_\ell}$
where the structure $\str{S}_{\ell+1}$ is constructed from the structure $\str{S}_{\ell}$ as described in the following proposition.

\begin{proposition}
\label{prop:levelstr}
	Let $S$ be a poset-diary. Then all words in $\overline{S}$ are mutually compatible, and for each  $\ell\leq \sup_{w\in S}|w|$ the structures $\str{S}_\ell=\AmbStr{\overline{S}_\ell}$ and
	$\str{S}_{\ell+1}=\AmbStr{\overline{S}_{\ell+1}}$ are related as follows:
	\begin{enumerate}
		\item If $\overline{S}_\ell$ introduces  a new leaf, then $\str{S}_{\ell+1}$ is isomorphic to $\str{S}_\ell$ with one vertex removed.
		\item If $\overline{S}_\ell$ is splitting, then $\str{S}_{\ell+1}$ 
			is isomorphic to $\str{S}_\ell$ with one vertex $v$ duplicated to $v',v''$ with $v'\lexlt v''$, $v'\not\preceq v''$, $v''\mathbin{\not \preceq} v'$,  and $v'\eltleq v''$.
		\item If $\overline{S}_\ell$ has a  new $\perp$, then $\str{S}_{\ell+1}$ is isomorphic to $\str{S}_\ell$ with one pair removed from   relation $\eltlt$ (and thus one pair added to $\perp$).
		\item If $\overline{S}_\ell$ has a new $\preceq$, then $\str{S}_{\ell+1}$ is isomorphic to $\str{S}_\ell$ extended by one pair in relation $\prec$.
	\end{enumerate}
\end{proposition}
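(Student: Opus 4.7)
The plan is to establish both assertions—mutual compatibility of all words in $\overline{S}$ and the stated structural change from $\str{S}_\ell$ to $\str{S}_{\ell+1}$—simultaneously by induction on $\ell$. The base case $\ell=0$ is immediate since $\overline{S}_0=\{\emptyset\}$. For the inductive step, I assume the conclusion holds at level $\ell$ and perform a case analysis on which of the four types of events occurs at level $\ell$.

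To check compatibility at level $\ell+1$, recall that Definition~\ref{defn:comp} only inspects character positions up to $\min(|u|,|v|)$, so only the newly appended character at position $\ell$ can introduce a violation. In the Leaf, Splitting, and New $\perp$ cases, only $\X$ and $\R$ are appended, so no new $(\L,\R)$ and no new $(\R,\L)$ pattern arises and compatibility is inherited from the inductive hypothesis. The New $\prec$ case is the delicate one: a pair $(z_1\cont \L, z_2\cont \R)$ with $z_1\cont \L\lexlt z_2\cont \R$ introduces a new $(\L,\R)$ at position $\ell$, and the lex-monotonicity clause \ref{defn:comp:p2} reduces to showing $z_1\eltleq z_2$ on the first $\ell$ positions. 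This is obtained via the chain $z_1\eltleq v\eltleq w\eltleq z_2$: the outer two links come from $z_1=v$ or ($z_1 \lexlt v$ and $z_1 \not\perp v$), resp.\ $z_2=w$ or ($z_2 \lexgt w$ and $z_2 \not\perp w$), using that $\lexleq$ is a linear extension of $\eltleq$ (property~\ref{P5}); the middle link uses the same observation together with the fact that $v$ and $w$ are unrelated. Absence of $(\R,\L)$ at position $\ell$ is immediate because every $\L$-receiving word lex-precedes every $\R$-receiving one.

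For the structural claim, the map $z\cont c\mapsto z$ gives a bijection between $\overline{S}_{\ell+1}$ and $\overline{S}_\ell$ (or, in the Leaf case, between $\overline{S}_{\ell+1}$ and $\overline{S}_\ell\setminus\{w\}$), and $\lexleq$ is evidently preserved by this map. In the Leaf and Splitting cases, most pairs receive the same character on both sides and hence preserve all of $\preceq,\eltleq,\lexleq$; the pair $(w\cont \X, w\cont \R)$ in Splitting records precisely the claimed duplication. In New $\perp$, appending $(\R,\X)$ to $(v,w)$ converts $v\eltlt w$ into $v\cont \R\perp w\cont \X$, and condition~\ref{A2} ensures that for each intermediate $z$ one of $z\perp v$ or $z\perp w$ already holds at level $\ell$, which suffices to show that no other $\eltlt$-pair degenerates. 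In New $\prec$, appending $(\L,\R)$ to $(v,w)$ produces $v\cont \L\prec w\cont \R$ with witness $\ell$; for every other pair $(z_1\cont \L, z_2\cont \R)$, conditions \ref{B1} and \ref{B2} combined with property \ref{P6} force $z_1\preceq z_2$ to have held already at level $\ell$, so no other new $\prec$ relation appears. Preservation of $\eltleq$ between all other pairs follows by matching the appended characters against the chain structure established above.

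The main obstacle is the bookkeeping in the New $\perp$ and New $\prec$ cases: verifying that the only relation modified is the intended one requires a systematic case split on the position of each $z\in\overline{S}_\ell$ relative to $v$ and $w$ under $\lexleq$ and $\perp$, and repeated application of conditions \ref{A2}, \ref{B1}, \ref{B2} together with the properties of Proposition~\ref{prop:orders}. Once these verifications are carried out, the inductive step closes and the proposition follows.
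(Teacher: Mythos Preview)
Your approach is correct and essentially the same as the paper's: induction on the level $\ell$, case analysis on the four event types, and repeated appeal to the properties in Proposition~\ref{prop:orders}; the paper packages the routine step as Observation~\ref{obs:safeext} (if $c\lexleq c'$ and $(c,c')\neq(\L,\R)$ then appending $(c,c')$ preserves $\preceq$, $\perp$, and compatibility), but the substance is identical.

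One small point to tighten: your compatibility argument for the New~$\perp$ case is not quite complete as stated. Saying ``only $\X$ and $\R$ are appended, so no new $(\L,\R)$ or $(\R,\L)$ pattern arises'' rules out violations of clause~\ref{defn:comp:p1}, but clause~\ref{defn:comp:p2} can still fail if some pair $z_1\lexlt z_2$ with $z_1\prec z_2$ (so an $(\L,\R)$ already occurs at an earlier position) receives $(\R,\X)$ at position~$\ell$. This does \emph{not} actually happen---the same case split you outline for the structural claim, using condition~\ref{A2} and property~\ref{P6}, rules it out---but it is not automatic from the absence of the letter~$\L$. The paper's proof is equally brief here (``mutual compatibility follows by analogous argument''), so this is a shared expository shortcut rather than a divergence in method.
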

To prove Proposition~\ref{prop:levelstr}, we note the following easy observation.
\begin{observation}
	\label{obs:safeext}
	Let $u\llt v\in \Sigma^*_i$ for some $i\geq 0$ and $c,c'\in \Sigma$ such that $c\lleq c'$ and $(c,c')\neq (\L,\R)$. Then:
	\begin{enumerate}
		\item $u\preceq v\iff u\cont c\preceq v\cont c'$,
		\item $u\perp v\iff u\cont c\perp v\cont c'$,
		\item If $u$ and $v$ are compatible then $u\cont c$ and $v\cont c'$ are compatible.
	\end{enumerate}
\end{observation}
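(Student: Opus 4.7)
The plan is to unfold the definitions of $\prec$, $\perp$, and compatibility at the single newly-appended coordinate $i=|u|=|v|$, and to verify in each of the three items that the two hypotheses on the pair $(c,c')$ are exactly what is needed to exclude ``spurious'' new phenomena at position $i$. All three claims will reduce to a short case split on whether the relevant witness or violating index lies in $\{0,\ldots,i-1\}$ or equals $i$; on the first range everything is inherited from $u,v$ because the initial $i$ coordinates are untouched, while on the single new coordinate $i$ the hypotheses on $(c,c')$ kick in.

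For item (1), I read the right-hand side as $u\cont c\preceq v\cont c'$ (which is what the intended biconditional requires, since $\lexleq$ linearly extends $\preceq$ by Proposition~\ref{prop:pos} and $u\lexlt v$ forces $u\cont c\lexlt v\cont c'$). Both $u\preceq v$ and $u\cont c\preceq v\cont c'$ therefore reduce to strict inequalities. If $u\prec v$ with witness $k=i(u,v)<i$, the same $k$ witnesses $u\cont c\prec v\cont c'$ since the first $i$ coordinates agree. Conversely, any witness $k$ for $u\cont c\prec v\cont c'$ is either in $\{0,\ldots,i-1\}$, in which case it also witnesses $u\prec v$, or $k=i$, which would force $(c,c')=(\L,\R)$ and is excluded by hypothesis. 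For item (2), because $u\lexlt v$, the relation $u\perp v$ is equivalent to the existence of some $\ell<i$ with $v_\ell\lexlt u_\ell$; since $c\lexleq c'$, no such inversion can be created at the new position $i$, so the set of violating indices is identical for $u\cont c, v\cont c'$ and for $u,v$.

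For item (3), compatibility is two coordinate-wise conditions (Definition~\ref{defn:comp}). The forbidden pattern $(\R,\L)$ from \ref{defn:comp:p1} cannot appear at any $\ell<i$ by compatibility of $u,v$, and cannot appear at $\ell=i$ because $c\lexleq c'$. For \ref{defn:comp:p2}, if the triggering pattern $(\L,\R)$ occurs for $u\cont c, v\cont c'$ at some $\ell'<i$, then compatibility of $u,v$ yields $u_{\ell''}\lexleq v_{\ell''}$ for every $\ell''<i$ and $c\lexleq c'$ handles $\ell''=i$; if instead $\ell'=i$, then $(c,c')=(\L,\R)$, which is excluded. The whole proof is a bookkeeping exercise on one extra coordinate, so there is no substantive obstacle; the only care needed is to match the two exclusions correctly — $(c,c')\neq(\L,\R)$ is what saves items~(1) and~(3), while the monotonicity $c\lexleq c'$ is what saves items~(2) and~(3).
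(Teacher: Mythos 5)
Your proof is correct and coincides with what the paper intends: the Observation is stated without proof as an easy fact, and your case split on whether the relevant witness or violating index lies below $i$ or equals the new coordinate $i$, with $(c,c')\neq(\L,\R)$ handling items (1) and (3) and $c\lexleq c'$ handling items (2) and (3), is exactly the routine verification the authors had in mind. You are also right to read item (1) as $u\cont c\preceq v\cont c'$; the printed form $v\cont c\preceq u\cont c'$ is a typo, since $\lexleq$ extends $\preceq$ and $u\lexlt v$ forces $u\cont c'\lexlt v\cont c$.
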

\begin{proof}[Proof of Proposition~\ref{prop:levelstr}]
	Fix  a poset-diary $S$ and level $\ell<\sup_{w\in S}|w|$. We consider individual cases.
	\begin{enumerate}
		\item Leaf vertex $w$: We have that $|\str{S}_\ell|=|\str{S}_{\ell+1}|+1$ since $w$ is the only vertex of $\str{S}_\ell$ not extended to a vertex in $\str{S}_{\ell+1}$.  The desired isomorphism and mutual compatibility follows by Observation~\ref{obs:safeext}.
		\item Splitting of vertex $w$: Here $w$ is the only vertex with two extensions. The desired isomorphism and mutual compatibility follows again by Observation~\ref{obs:safeext}.
		\item New $v\perp w$: Since $v\lexlt w$ are unrelated and thus $v\eltlt w$, we know that $v\cont \R$ and $w\cont \X$ are compatible and that $v\cont \R\perp w\cont \X$ holds. Since we extended by letters $\X$ and $\R$ we know that there are no new pairs in relation $\preceq$.

		Now assume, for contradiction, that there is $u\in \overline{S}_\ell\setminus\{v,w\}$ unrelated to $v$ such that the extension of $u$ in $\overline{S}_{\ell+1}$ is related to $v\cont \R\in \overline{S}_{\ell+1}$. Since all words lexicographically before $v$ are extended by $\X$ and all words lexicographically after $w$ by $\R$, by Observation~\ref{obs:safeext}, we conclude that $v\lexlt u\lexlt w$ and $u$ extends by $\X$. Mutual compatibility follows by analogous argument.
		\item New $v\preceq w$: Since $v\lexlt w$ are unrelated, we know that $v\cont \L$ and
		      $w\cont \R$ are compatible and $v\cont \L\preceq w\cont \R$ holds. To see that no
		      additional pair to relation $\preceq$ was introduced, observe that for $u,u'\in \overline{S}_\ell$,
		      $(u,u')\neq (v,w)$ to be extended to
		      $u\cont \L$, ${u'} \cont \R$
			we have, by assumptions \ref{B1} and \ref{B2},  $u\preceq v$ and $w\preceq u'$. Since $v\lexlt w$ is unrelated we also have $v\eltleq v$. By Proposition~\ref{prop:orders}~\ref{P6} $u\preceq v\lexlt w\implies u\preceq v$ and thus also $u\preceq u'$.

		      It remains to consider the possibility that new pairs are added to relation $\perp$. We again consider individual cases.

		      First consider the case that $u$ is unrelated to $v$ but their extensions are newly in $\perp$. Since $v$ extends by $\L$ we know that $u\lexlt v$ and $v$ extends by $\X$. This contradicts construction of $\overline{S}_{i+1}$.

		      The case that $u$ is unrelated to $w$ but their extensions are newly in $\perp$ follows by symmetry.

			It thus remains to consider the case where $u\lexlt u'$, $u,u'\notin\{v,w\}$, are unrelated in $\overline{S}_i$,  however their extensions are related in $\overline{S}_{i+1}$. It is not possible for $u$ to extend by $\R$ and $u'$ by $\L$. So assume that $u$ extends by $\X$ and $v$ extends by $\L$ (the remaining case follows by symmetry). From this we conclude that $u\lexlt u'\lexlt v$, $u\perp v$ and $u'\not\perp v$. Since $u\not\perp u'$ we again obtain a contradiction with Proposition~\ref{prop:orders} \ref{P4} or \ref{P6}.

		      Mutual compatibility follows by analogous argument.

	\end{enumerate}
\end{proof}

\section{A poset-diary coding $\str{P}$}
\label{sec:typeP}
Recall that $\str{P} = (\omega, \leq_{\str{P}})$ denotes a fixed enumerated generic poset. 
We define a function $\varphi\colon \omega\to \Alphabet^*$ by mapping $j< \omega$ to a word $w$ of length $2j+2$ defined by
putting $(w_{2j},w_{2j+1})=(\L,\R)$ and, for every $i<j$, $(w_{2i},w_{2i+1})$ to $(\L,\L)$ if $j\mathbin{\leq_\str{P}} i$, $(\R,\R)$ if $i\mathbin{\leq_\str{P}} j$, and $(\X,\X)$ otherwise.
We set $T=\overline{\varphi[\omega]}$. The following result is easy to prove by induction.
\begin{proposition}[Proposition 4.7 of \cite{Hubicka2020CS}]
	The function $\varphi$ is an embedding $\str{P}\to (\Sigma^*,\preceq)$.
	More\-over, $\varphi(v)$ is a leaf of $T$ for every $v\in \str{P}$, all words in $T$ are mutually compatible, and if $v,w\in \str{P}$ are incomparable, we have $\varphi(v)\perp\varphi(w)$.
\end{proposition}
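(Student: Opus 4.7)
The intuition behind the definition is that $\varphi(j)$ records, in doubled form, the quantifier-free type of $j$ over $\{0,1,\ldots,j-1\}$ in $\str{P}$, followed by a self-marker $(\L,\R)$ at the terminal positions $(2j,2j+1)$. Writing $\tau_i(j)$ for the letter encoding the relation of $j$ with $i$ (so $\tau_i(j)=\L$ iff $j\leq_\str{P} i$, $\tau_i(j)=\R$ iff $i\leq_\str{P} j$, and $\tau_i(j)=\X$ otherwise), one has $(\varphi(j)_{2i},\varphi(j)_{2i+1})=(\tau_i(j),\tau_i(j))\in\{(\L,\L),(\R,\R),(\X,\X)\}$ for each $i<j$. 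Injectivity of $\varphi$ is then immediate since $|\varphi(j)|=2j+2$ is strictly increasing in $j$.

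To show that $\varphi$ is an embedding, first suppose $v<_\str{P} w$. If $v<w$ as integers, I claim $\varphi(v)\prec\varphi(w)$ with witness $i^*=2v$: the self-marker gives $\varphi(v)_{2v}=\L$, while $v\leq_\str{P} w$ forces $\tau_v(w)=\R$ and hence $\varphi(w)_{2v}=\R$. Verification of $\varphi(v)_j\lexleq\varphi(w)_j$ for $j<2v$ reduces to checking, for each $i<v$, that combining $v\leq_\str{P} w$ with each of the three possible values of $\tau_i(v)$ forces $\tau_i(v)\lexleq\tau_i(w)$ by a single application of transitivity. If $w<v$ as integers, the symmetric witness $i^*=2w+1$ works. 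The converse is a case analysis on the parity of the witness: writing $k=\lfloor i^*/2\rfloor$, the equality $\varphi(v)_{i^*}=\L$ forces either $k=v$ (self-marker) or $v\leq_\str{P} k$, and analogously $\varphi(w)_{i^*}=\R$ forces either $k=w$ or $k\leq_\str{P} w$; chaining the two yields $v\leq_\str{P} w$.

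The leaf property is immediate: any extension of $\varphi(v)$ in $T$ would be some $\varphi(w)$ with $w>v$, but positions $(2v,2v+1)$ of such $\varphi(w)$ form a pair in $\{(\L,\L),(\R,\R),(\X,\X)\}$, never the required $(\L,\R)$. For compatibility, since the two conditions of Definition~\ref{defn:comp} descend to initial segments, it suffices to verify them for each pair $\varphi(w),\varphi(w')$; the key point is that a forbidden pattern $(\R,\L)$ at a shared doubled position $(2i,2i+1)$ translates into $w'\leq_\str{P} i\leq_\str{P} w$, and similarly $(\L,\R)$ forces $w\leq_\str{P} i\leq_\str{P} w'$, so in each of the three cases for the poset relation of $w,w'$ (comparable either way or incomparable) the conditions of Definition~\ref{defn:comp} can be verified by contradiction, with the extreme positions $(2\min(w,w'),2\min(w,w')+1)$ handled separately via the self-marker. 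Finally, for $v,w$ incomparable with $v<w$ as integers, $\tau_v(w)=\X$ simultaneously gives $\varphi(v)_{2v}=\L\lexlt\X=\varphi(w)_{2v}$ and $\varphi(w)_{2v+1}=\X\lexlt\R=\varphi(v)_{2v+1}$, exhibiting that $\varphi(v)$ and $\varphi(w)$ are $\eltleq$-incomparable on their common prefix. The main obstacle is the compatibility case analysis, which is patient but entirely mechanical.
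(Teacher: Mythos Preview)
Your proof is correct. The paper itself does not give a proof of this proposition; it simply cites \cite{Hubicka2020CS} and remarks that it is ``easy to prove by induction.'' Your direct case analysis (splitting on whether the smaller integer index lies below or above in $\leq_\str{P}$, and using the self-marker positions $2\min(v,w),\,2\min(v,w)+1$ as explicit witnesses) is a clean way to carry this out without needing an inductive setup. One small point worth making explicit in the converse direction: when you write that $\varphi(v)_{i^*}=\L$ forces ``$k=v$ (self-marker) or $v\leq_\str{P} k$,'' note that the self-marker case can only occur when $i^*$ is even, and symmetrically $k=w$ only when $i^*$ is odd; this rules out the degenerate sub-case $k=v=w$ and makes the chaining airtight. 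Also, since $\perp$ and $\eltleq$ are formally defined only for words of equal length, your phrase ``$\eltleq$-incomparable on their common prefix'' is exactly the right interpretation of the proposition's slightly informal use of $\varphi(v)\perp\varphi(w)$.
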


We will need the following refinement of this embedding.

\begin{theorem}
	\label{thm:posetemb}
	There exists an embedding $\psi\colon \str{P}\to (\Sigma^*,\preceq)$ such that
	$\psi[\omega]$ is a poset-diary.
\end{theorem}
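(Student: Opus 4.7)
We construct $\psi$ inductively, level by level, so that the transition from each $\overline{S}_\ell$ to $\overline{S}_{\ell+1}$ realises exactly one of the four operations of Definition~\ref{def:posetdiary}. Fix an enumeration $(v_n)_{n<\omega}$ of $\str P$. At each stage, some words of $\overline{S}_\ell$ are \emph{committed} to a specific $v_n$ (so that $\psi(v_n)$ will extend this word), while others are \emph{free}, kept in reserve for future vertices. The level structure $(\overline{S}_\ell,\lexleq,\preceq,\eltleq)$ then plays the role of a finite approximation to $\str P$: pairs $v_i,v_j$ whose $\str P$-relation has already been set appear as $\preceq$- or $\perp$-related branches, while pairs still awaiting resolution appear as unrelated branches.

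We cycle by diagonalisation through three kinds of tasks: (T1) introduce $v_n$ by splitting an appropriately chosen free branch and committing one of the two resulting branches to $v_n$; (T2) establish the $\str P$-relation between two committed branches via a new $\prec$ or new $\perp$ operation; and (T3) finalise $\psi(v_n)$ by a leaf operation once its branch is related to every other word at the current level. Between these we interleave auxiliary splittings to maintain the invariant that, at every level and for every admissible relation-type with the currently committed vertices, a free branch realising that type is available in the correct lex position. The \fr property of $\str P$ ensures that every such type corresponds to an actual vertex of $\str P$, so no auxiliary free branch is ever orphaned.

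The main difficulty is verifying the combinatorial preconditions of the operations in tasks (T2) and (T3): conditions (A2), (B1), (B2) of Definition~\ref{def:posetdiary}, together with the precondition of the leaf operation requiring the finalised branch to be related to every other word at that level. These conditions restrict how branches lexicographically between (or on either side of) the two branches being connected may relate to them, so we must control the lex positions of free branches during preparatory splittings. Proposition~\ref{prop:orders} and Observation~\ref{obs:safeext} are the key technical tools: the former ensures that the level structure behaves like a partial order with a linear extension, and the latter shows that relations, once established, persist under safe extensions of the branches. With these invariants maintained, Proposition~\ref{prop:levelstr} guarantees that each scheduled operation updates the level structure exactly as needed, and passing to the limit $S = \psi[\omega]$ yields a poset-diary whose partial order $(S,\preceq)$ is isomorphic to $\str P$, with $\psi$ the desired embedding.
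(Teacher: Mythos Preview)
Your route is genuinely different from the paper's, and while plausible in outline, the sketch leaves the hardest part undone. The paper does not build the diary from scratch; it starts from the embedding $\varphi\colon\str P\to(\Sigma^*,\preceq)$ described at the beginning of Section~\ref{sec:typeP} (where $\varphi(j)$ has length $2j+2$, all words in $T=\overline{\varphi[\omega]}$ are mutually compatible, and incomparable vertices map to $\perp$-related words), and then \emph{serialises} each level transition of $T$. For each $\ell$ it constructs $\psi_\ell\colon T_\ell\to\Sigma^*_{N_\ell}$ by iterating sub-steps: do one splitting if the current map is not yet injective; else do one new~$\perp$ for some pair still needing it; else do one new~$\prec$. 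That a pair satisfying~\ref{A2} (respectively~\ref{B1},~\ref{B2}) always exists is shown by choosing, among all pairs still missing the relation, one whose lex-interval is minimal (for $\perp$) or maximal (for $\prec$) and invoking Proposition~\ref{prop:orders}. The sub-steps terminate once $\psi_\ell$ is a level-structure isomorphism; leaves arise exactly where $\varphi(i)$ is a leaf of $T$, with the leaf precondition inherited from $\varphi$. There are no free branches and no scheduling beyond the enumeration already built into $\varphi$.

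In your approach the free branches are the crux, and the sketch does not control them. To leaf $v_n$ you must first relate its branch to \emph{every} free branch present, which forces you to fix each such free branch's relation to $v_n$ in $\str P$ before you know which vertex it will represent. Maintaining the invariant that a free branch of every admissible type exists, in the correct lex position, through each new-$\perp$ and new-$\prec$ operation is itself nontrivial: those operations extend \emph{all} branches by letters depending on their current relations to the pair being processed, shifting the types of the free branches you are trying to preserve. And you must argue that every free branch is eventually committed and leafed while every $v_n$ in the fixed enumeration is eventually introduced; the \fr property tells you vertices of a given type exist, not that your schedule reaches them. None of this is impossible, but filling it in would be at least as much work as the paper's argument, which localises everything to one level of $T$ at a time and lets $\varphi$ carry the global structure.
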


\begin{proof}
	Fix the embedding $\varphi$ as above and put $T=\overline{\varphi[\omega]}$.
	We proceed by induction on levels of $T$.
	For every $\ell$, we define an integer $N_\ell$ and a function $\psi_\ell\colon T_\ell\to \Sigma^*_{N_\ell}$.
	We will maintain the following conditions:
	\begin{enumerate}
		\item The set $\overline{\psi_\ell[T_\ell]}$ satisfies the conditions of Definition~\ref{def:posetdiary} for all levels with the exception of $N_\ell-1$.
		\item If $\ell>0$, then, for every $u\in T_\ell$, the word $\psi_\ell(u)$ extends $\psi_{\ell-1}(u|_{\ell-1})$.
	\end{enumerate}

	We let $N_0=0$ and put $\psi_0$ to map the empty word to the empty word.
	Now, assume that $N_{\ell-1}$ and $\psi_{\ell-1}$ are already defined.
	We inductively define a sequence of functions $\psi^i_\ell\colon T_\ell\to \Sigma^*_{N_{\ell-1}+i}$. Put $\psi^0_\ell(u)=\psi_{\ell-1}(u|_{\ell-1})$.
	Now, we proceed in steps. At step $j$, apply the first of the following constructions that can be applied and terminate the procedure if none of them applies:
	\begin{enumerate}
		\item\label{posemb1} If $\psi_\ell^{j-1}$ is not injective, let $w\in T_\ell$ be lexicographically least so that $\psi^{j-1}_\ell(w) = \psi^{j-1}_\ell(x)$ for some $x\in T_\ell\setminus\{w\}$. Given $u\in T_\ell$, set $\psi_\ell^j(u) = \psi_\ell^{j-1}(u)^\frown X$ if $u\lexleq w$, and set $\psi_\ell^j(u) = \psi_\ell^{j-1}(u)^\frown R$ if $w\lexlt u$. Then this satisfies the conditions on new splitting at $\psi_{\ell}^{j-1}(w)$ as given in Definition~\ref{def:posetdiary}.
		\item\label{posemb2} If there are words $w$ and $w'$ from $T_\ell$ with $w\lexlt w'$ such that $w\perp w'$ and $\psi^{j-1}_\ell(w)\mathbin{\not \perp}\psi^{j-1}_\ell(w')$ and condition~\ref{A2} of Definition~\ref{def:posetdiary} is satisfied for the value range of~$\psi^{j-1}_\ell$, we construct $\psi^j_\ell$ to satisfy the conditions on new $\perp$ for $\psi^{j-1}_\ell(w)$ and $\psi^{j-1}_\ell(w')$ as given by Definition~\ref{def:posetdiary}.
		\item\label{posemb3}  If there are words $w$ and $w'$ from $T_\ell$ with $w\lexlt w'$ such that $w\prec w'$ and $\psi^{j-1}_\ell(w)\mathbin{\not \prec}\psi^{j-1}_\ell(w')$ and conditions~\ref{B1} and~\ref{B2} of Definition~\ref{def:posetdiary} are satisfied for the value range of~$\psi^{j-1}_\ell$, we construct $\psi^j_\ell$ to satisfy the conditions on new $\prec$ for $\psi^{j-1}_\ell(w)$ and $\psi^{j-1}_\ell(w')$ as given by Definition~\ref{def:posetdiary}.
	\end{enumerate}

	Let $J$ be the largest index for which for which $\psi^J_\ell$ is defined.
	\begin{claim}
	    $\psi^J_\ell$ is an isomorphism  $\AmbStr{T_\ell}\to
	\AmbStr{\psi^J_\ell[T_\ell]}$.
	\end{claim}
	
	\begin{proof}[Proof of claim]
	    Suppose, to the contrary, that this is not true. If $\psi^J_\ell$ is not a bijection, this means that there are $w,w' \in T_\ell$ such that $\psi^J_\ell(w) = \psi^J_\ell(w')$. But then the conditions in~(\ref{posemb1}) are satisfied, a contradiction with maximality of $J$. So $\psi^J_\ell$ is a bijection. Note that the steps of the construction ensure that $\psi^J_\ell$ respects $\lexlt$. We also have $\psi^J_\ell(w)\perp \psi^J_\ell(w')\implies w\perp w'$ and $\psi^J_\ell(w)\preceq \psi^J_\ell(w')\implies w\preceq w'$ for $w,w'\in T_\ell$.

		If there are $w,w' \in T_\ell$ such that $w\lexlt w'$, $w\perp w'$ and $\psi^J_\ell(w)\mathbin{\not \perp}\psi^J_\ell(w')$, pick among all such pairs $(w, w')$ one minimizing $|\{u\in T_\ell: w\lexlt u\lexleq w'\}|$. Proposition~\ref{prop:orders} implies that the conditions in~(\ref{posemb2}) are satisfied, again a contradiction with maximality of $J$.

		So there are  $w,w' \in T_\ell$ such that $w\lexlt w'$, $w\prec w'$ and $\psi^J_\ell(w)\mathbin{\not \prec}\psi^J_\ell(w')$, and we can assume that $w,w'$ maximize $|\{u\in T_\ell: w\lexlt u\lexleq w'\}|$. Proposition~\ref{prop:orders} implies that the conditions in~(\ref{posemb2}) are satisfied, again a contradiction with maximality of $J$. Hence indeed $\psi^J_\ell$ is an isomorphism  $\AmbStr{T_\ell}\to
		\AmbStr{\psi^J_\ell[T_\ell]}$. 
	\end{proof}

	Finally, we put $N_\ell=|\psi^J_\ell(w)|$ for some $w\in T_\ell$ and $\psi_\ell=\psi^J_\ell$.
	Once all the functions $\psi_\ell$ are constructed, we can set $\psi(i) = \psi_{2i+2}(\varphi(i))$. It is easy to verify that this is an embedding $\str{P}\to (\Sigma^*,\preceq)$ such that
	$\psi[\omega]$ is a poset-diary; if not, it fails at some finite level $\ell$, but the construction ensures that every level adheres to the conditions of Definition~\ref{def:posetdiary}.
\end{proof}

\section{Interesting levels and sub-diaries}
We now aim to prove upper bounds for the big Ramsey degrees of $\str{P}$. To do this, we need to define a notion of sub-diary
which corresponds to a subtree of $\Sigma^*$ which preserves all important features of a given subset.  Given $S\subseteq \Sigma^*$ we first
determine which levels contain interesting changes and then define a sub-tree by removing the remaining ``boring" levels from the tree.
This is related to the notion of parameter-space envelopes used in~\cite{Hubicka2020CS}, but more versatile, making it possible to get exact upper bounds.

\begin{definition}[Interesting levels]
	Given $S\subseteq \Sigma^*$ and $i \leq \max\{|s|: s\in S\}$, we call $i$ \emph{interesting for $S$}, or simply \emph{interesting} if $S$ is understood, if
	\begin{enumerate}
		\item the structure $\overline{\str{S}}_i=\AmbStr{\overline{S}_{i}}$ is not isomorphic to  $\overline{\str{S}}_{i+1}=\AmbStr{\overline{S}_{i+1}}$, or 
		\item there exist incompatible $u,v\in \overline{S}_{i+1}$ such that $u|_{i}$ and $v|_{i}$ are compatible, or
        \item 
        there is $u\in S$ with $\lvert u\rvert = i$.
	\end{enumerate}
\end{definition}

\begin{remark}
	Interesting levels are the analog for subsets of $\Sigma^*$ of the notion of \emph{critical level} for a subset of coding nodes in $\ct$; see for instance Definition 5.1 of \cite{zucker2020} or  Definition 5.1.3 of \cite{Balko2021exact} (we note that the two definitions are slightly different).
\end{remark}
Given $S\subseteq \Sigma^*$ and levels $\ell<\ell'$, we call a level $\ell'$ a \emph{duplicate of $\ell$} if $S$ contains no word of length $\ell$ or $\ell'$ and moreover for every $u\in S$ of length greater than $\ell'$ it holds that $u_\ell=u_{\ell'}$.
(Equivalently, all words in $S$ pass the level $\ell'$ in the same way as they pass the earlier level $\ell$.)
By checking definitions of $\lexleq$, $\preceq$ and $\eltleq$ one can derive the following simple result.

\begin{observation}
	\label{obs:duplicatelevels}
	For every $S\subseteq \Sigma^*$ and every $\ell<\ell'$ where level $\ell'$ is a duplicate of level $\ell$, it holds that $\ell'$ is not interesting for $S$.
\end{observation}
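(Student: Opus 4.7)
The plan is to verify, for a level $\ell'$ that is a duplicate of some $\ell<\ell'$, that none of the three clauses in the definition of interesting level can hold. The driving tool is the identity
\[
v_\ell \;=\; v_{\ell'} \qquad\text{for every } v\in\overline{S} \text{ with } |v|\geq\ell'+1,
\]
which is immediate from the duplicate condition: any such $v$ is an initial segment of some $u\in S$ with $|u|>\ell'$, so $u_\ell=u_{\ell'}$, and this propagates down to $v$. Morally, above level $\ell'$ the letter at position $\ell'$ is completely redundant with the letter at position $\ell$.

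To handle clause~(1), I will introduce the truncation $\pi\colon \overline{S}_{\ell'+1}\to\overline{S}_{\ell'}$ defined by $\pi(v)=v|_{\ell'}$. Injectivity is free: the only discarded letter is $v_{\ell'}$, which by the key identity equals $v_\ell=(\pi(v))_\ell$, so $v$ is recoverable from $\pi(v)$. Surjectivity holds provided $S$ has no word of length $\ell'$, which is exactly what failure of clause~(3) demands (and which I take to be part of the intended reading of the duplicate condition). I then check that $\pi$ preserves each of $\lexleq$, $\preceq$, and $\eltleq$: for each relation, any witness occurring at position $\ell'$ transports via the key identity to the equivalent witness at position $\ell<\ell'$, while witnesses strictly below position $\ell'$ carry over verbatim. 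This produces an isomorphism $\AmbStr{\overline{S}_{\ell'+1}}\cong\AmbStr{\overline{S}_{\ell'}}$ and kills clause~(1).

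Clause~(2) yields to the same transport principle: if $u,v\in\overline{S}_{\ell'+1}$ are incompatible, I will argue $\pi(u),\pi(v)$ are also incompatible by inspecting Definition~\ref{defn:comp}. Any $(\R,\L)$ witness for \ref{defn:comp:p1} at position $\ell'$, or any $(\L,\R)$ witness or violating index $j$ for \ref{defn:comp:p2} at position $\ell'$, descends via the key identity to the corresponding witness at position $\ell$, which sits strictly below $\ell'$ and hence remains a valid witness for $\pi(u),\pi(v)$. The main obstacle is purely bookkeeping and is concentrated in this compatibility step: the conditional nature of clause~\ref{defn:comp:p2} forces a small case split, and one has to keep straight the separation between witnesses at position exactly $\ell'$ (which must be transported) and witnesses at positions strictly below (which need no adjustment). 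Nothing in the argument is conceptually deep; the entire proof is an inspection of definitions, confirming that duplicate letters enforce duplicate relational behaviour.
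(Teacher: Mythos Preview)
Your proposal is correct and is precisely the definition-check the paper has in mind; the paper offers no proof beyond the remark that the result follows ``by checking definitions of $\lexleq$, $\lexlt$, $\preceq$ and $\eltleq$''. Your caveat about clause~(3) is a valid catch of a minor imprecision in the stated definition of duplicate level (which should indeed exclude words of length $\ell'$ rather than $\ell$, as is in fact the case in the paper's only application of the observation, inside the proof of Lemma~\ref{lem:pigeonhole}).
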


\begin{definition}[Embedding types]
	\label{embtype}
Let $I(S)$ be the set of all interesting levels in $S$. Let $\tau_S\colon S\to \Sigma^*$ be the mapping assigning 
to 
each $w\in S$ 
the word created from $w$ by deleting all letters with indices not in $I(S)$.
Define $\tau(S)=\tau_S[S]$, and call this the \emph{embedding type of $S$}.
\end{definition}
	The following observation is a direct consequence of Definition~\ref{def:posetdiary}.

\begin{observation}\label{obs:types}
	For a poset-diary $S$ and $S'\subseteq S$, $\tau(S')$ is a poset-diary.\qed
\end{observation}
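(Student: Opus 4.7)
The plan is to verify that $\tau(S')$ satisfies both requirements of Definition~\ref{def:posetdiary}: it is an antichain under $\sqsubseteq$, and each level executes exactly one of the four operations. For the antichain property, take distinct $u, v \in S'$ and let $k$ be the smallest position at which they disagree. Then $u|_k = v|_k \in \overline{S'}_k$ has two distinct extensions in $\overline{S'}_{k+1}$. By Proposition~\ref{prop:levelstr} and inspection of Definition~\ref{def:posetdiary}, the only operation producing two distinct extensions of a single word is Splitting at $u|_k$, which enlarges the level structure; hence $k \in I(S')$. Consequently the character at position $k$ is retained by $\tau_{S'}$, so $\tau_{S'}(u)$ and $\tau_{S'}(v)$ still disagree at a common index, making neither a prefix of the other.

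Next, I show that at every interesting level $\ell_j$ of $S'$ (listing $I(S') = \{\ell_0 < \ell_1 < \cdots\}$), the transition from $\overline{S'}_{\ell_j}$ to $\overline{S'}_{\ell_j+1}$ executes exactly one of the four operations with critical data inside $\overline{S'}_{\ell_j}$. Since $\overline{S'} \subseteq \overline{S}$, each word in $\overline{S'}_{\ell_j}$ extends by the character prescribed at level $\ell_j$ of $S$. A case analysis on this operation, employing Observation~\ref{obs:safeext} together with the auxiliary conditions \ref{A2}, \ref{B1}, \ref{B2}, shows that if the critical word (for Leaf or Splitting) or either endpoint of the critical pair (for New $\perp$ or New $\prec$) is missing from $\overline{S'}_{\ell_j}$, then no pair in $\overline{S'}_{\ell_j}$ acquires a new relation and no new incompatibility is introduced, forcing $\ell_j \notin I(S')$ and yielding a contradiction. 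Therefore the critical data survives to $\overline{S'}$, and the auxiliary conditions restrict faithfully.

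To conclude, the natural projection $\overline{S'}_{\ell_j} \to \overline{\tau(S')}_j$ obtained by deleting characters at non-interesting positions is a structure-preserving bijection: $\lexleq$, $\preceq$, and $\eltleq$ are all preserved because every witness --- the first position of lex-disagreement between two distinct words, the position establishing a strict $\prec$-relation, or the position at which positional $\lexleq$ fails for $\eltleq$ --- necessarily occurs at a level where a structural change occurs in $\overline{S'}$, hence at an interesting level. Therefore the operation performed in $\overline{S'}$ at level $\ell_j$ transfers to the same operation at level $j$ of $\overline{\tau(S')}$, so $\tau(S')$ is a poset-diary. The main obstacle is the case analysis in the second paragraph: especially for New $\prec$ at $(v, w)$ with $w \notin \overline{S'}_{\ell_j}$, one must invoke condition \ref{B2} to show that an apparently new $\prec$-relation between $v$ and some $u' \in \overline{S'}_{\ell_j}$ with $w \lexlt u'$ and $u' \not\perp w$ was in fact already present, since \ref{B2} forces $v \preceq u'$ prior to the extension.
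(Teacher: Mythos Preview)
Your proposal is correct, and in fact considerably more detailed than the paper's own treatment: the paper simply records the observation with a \qed, declaring it ``a direct consequence of Definition~\ref{def:posetdiary}'' and giving no argument at all. What you have written is the natural way to unpack that assertion.

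A few minor comments. Your second paragraph is the heart of the matter, and it is cleanest to phrase it via Proposition~\ref{prop:levelstr} rather than re-doing the case analysis: that proposition already tells you that at each level of $S$, the level structure of $\overline{S}$ changes in exactly one place (a vertex removed, a vertex duplicated, a single pair added to $\prec$, or a single pair removed from $\eltlt$), and that compatibility is preserved throughout. Restricting to $\overline{S'}\subseteq\overline{S}$, the level is interesting for $S'$ precisely when that single change involves words of $\overline{S'}$ (for Splitting, when both children survive; for Leaf, when the leaf lies in $S'$, which also triggers clause~(3) of the definition of interesting level). Your remark about invoking \ref{B2} in the New~$\prec$ case with $w\notin\overline{S'}$ is exactly the content of the corresponding case in the proof of Proposition~\ref{prop:levelstr}, so you may simply cite it. Note also that since all words of $\overline{S'}$ are mutually compatible by Proposition~\ref{prop:levelstr}, clause~(2) of the definition of interesting level never fires for $S'$; this is worth saying explicitly.

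In the third paragraph, rather than locating individual witnesses, it is slightly cleaner to prove by induction on $j$ that the projection $\overline{S'}_{\ell_j}\to\overline{\tau(S')}_j$ is a level-structure isomorphism: the inductive step uses that the relations on a one-letter extension are determined by the relations on the base together with the extension characters, and both of these are preserved. Either route works.
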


We therefore call $\tau(S')$ the \emph{sub-diary} induced by $S'\subseteq S$.

\begin{definition}
\label{Def:Boring}
Recall that for a set $A=\{u^0\lexlt u^1\lexlt\cdots\lexlt u^{n-1}\}\subseteq\Sigma^*_\ell$ (for some $\ell>0$) and word $e\in\Sigma^*_n$, we put $A\cont e=\{{u^i}\cont e_i:0\leq i<n\}$.
We call $e\in \Sigma^*_n$ a \emph{boring extension} of $A$ if $\ell$ is not interesting for $A^\frown e$.
We will denote by $\Pi_A$ the set of all boring extensions of $A$ and by $\Pi_A^*$ the set of all finite words over the alphabet $\Pi_A$. That is, a member of $\Pi_A^*$ is a sequence $w = (w^0,w^1,\ldots,w^{\lvert w\rvert -1})$ such that for every $i$ we have that $w^i\in \Pi_A$. Here we use superscripts instead of subscripts since the ``letters'' of $w$ are themselves words.
\end{definition}

We first prove two properties of boring extensions.

\begin{proposition}
	\label{prop:typeextend}
	For every $\ell\geq 0$, set $S\subseteq \Sigma^*_\ell$ of mutually compatible words, and boring extension $e$ of $S$, there exists a boring extension $e'$ of $\Sigma^*_\ell$ such that $S\cont e\subseteq {\Sigma^*_\ell}\cont e'$.
\end{proposition}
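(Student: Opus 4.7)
My plan is to define $e'\colon \Sigma^*_\ell \to \Sigma$ explicitly using three mutually exclusive cases parametrised by witnesses from $S$: set $e'_v = \R$ if some $w \in S$ satisfies $w \eltleq v$ and $e_w = \R$; set $e'_v = \L$ if some $w \in S$ satisfies $v \eltleq w$ and $e_w = \L$; and otherwise set $e'_v = \X$. My first two bookkeeping steps will be \emph{well-definedness} (if witnesses $w_1, w_2$ for the two nontrivial cases existed simultaneously, then $w_1 \eltleq v \eltleq w_2$ gives $w_1 \eltleq w_2$ in $S$, so $\eltleq$-preservation from boringness of $e$ on $S$ forces $\R = e_{w_1} \lexleq e_{w_2} = \L$, which is impossible) and \emph{agreement with $e$ on $S$} (for $v \in S$ the element $v$ itself serves as a witness whenever $e_v \in \{\L,\R\}$, while no external witness can exist when $e_v = \X$ by boringness), yielding $S \cont e \subseteq \Sigma^*_\ell \cont e'$.

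The main task will then be to verify that $e'$ is a boring extension of $\Sigma^*_\ell$. Since the induced map $v \mapsto v \cont e'_v$ is automatically a $\lexleq$-isomorphism, three conditions suffice for an arbitrary pair $u \lexlt v$ in $\Sigma^*_\ell$: that $u \eltleq v$ implies $e'_u \lexleq e'_v$ (preserving $\eltleq$, hence also $\perp$); that $(e'_u, e'_v) \neq (\R, \L)$ (no new incompatibility from lex-inversion); and that $(e'_u, e'_v) = (\L, \R)$ with $u, v$ compatible forces $u \prec v$ (no new $\prec$-relation, and by Proposition~\ref{prop:orders}~\ref{P7} also $u \eltleq v$, so no new incompatibility from $(\L,\R)$ either). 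The $\eltleq$-preservation step is straightforward: an $\R$-witness below $u$ automatically sits below $v$, and an $\L$-witness above $v$ automatically sits above $u$.

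The two harder steps both follow the same template, which is the main obstacle of the proof. I will unpack the witnesses $w_1, w_2 \in S$ provided by the definition of $e'$ and case-analyse on their relationship in the level structure $\AmbStr{S}$. In the $(\R, \L)$ case the lex chain $w_1 \lexleq u \lexlt v \lexleq w_2$ forces $w_1 \lexlt w_2$, and then the fact that $e$ introduces no new incompatibility on $S$ directly contradicts $(e_{w_1}, e_{w_2}) = (\R, \L)$. In the $(\L, \R)$ case the lex order between $w_1$ and $w_2$ is not forced by the chain, so I will eliminate the subcases $w_2 \lexlt w_1$ (by the same lex-inversion argument, since it would force $(e_{w_2}, e_{w_1}) = (\R, \L)$) and $w_1 \perp w_2$ with $w_1 \lexlt w_2$ (a $(\L,\R)$ extension of unrelated words in $S$ would create a new incompatibility at level $\ell+1$, violating boringness). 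The surviving subcase $w_1 \eltleq w_2$ combined with boringness of $e$ on $S$ yields $w_1 \prec w_2$, and finally Proposition~\ref{prop:orders}~\ref{P6} applied twice chains $u \eltleq w_1 \prec w_2 \eltleq v$ into $u \prec v$, as desired.
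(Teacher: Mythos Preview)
Your proof is correct and rests on the same core idea as the paper's: extend each $v\in\Sigma^*_\ell$ by $\R$, $\L$, or $\X$ according to whether it has an $\eltleq$-neighbour in $S$ that received $\R$ below it or $\L$ above it. The paper packages this as ``choose the first safe character from $\X,\L,\R$'' and then unwinds that notion through a case analysis to see that $\X$ fails exactly when such a witness exists; your explicit three-case definition bypasses that indirection and makes well-definedness and agreement with $e$ on $S$ immediate. Your verification is also organised more tightly: where the paper checks four forbidden letter-pairs one by one (each time chasing down the witnesses in $S$), you isolate the three structural invariants ($\eltleq$-monotonicity, no $(\R,\L)$, and $(\L,\R)$ forces $\prec$) and handle them uniformly via the witness chain $u\eltleq w_1$, $w_2\eltleq v$ together with Proposition~\ref{prop:orders}~\ref{P6}. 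The two arguments are logically equivalent, but yours is the cleaner write-up.
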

\begin{proof}
	Fix $\ell\geq 0$, $S=\{u^0\lexlt  u^1\lexlt\cdots\lexlt u^{n-1}\}$, $\Sigma^*_\ell=\{v^0\lexlt v^1\lexlt \cdots\lexlt \allowbreak v^{m-1}\}$, and a boring extension $e$ of $S$. For $u\in S$ denote by $i(u)$ the integer $i$ satisfying $u^i=u$.
	For $v\in \Sigma^*_\ell\setminus S$ 
 we say that letter $c\in \Sigma$ is \emph{safe for $v$} if
	for every $0\leq j<n$ such that $u^j$ is compatible with $v$, it holds that ${u^j}\cont e_{j}$ is compatible with $v\cont c$ and $\AmbStr{\{u^j,v\}}$ is isomorphic to $\AmbStr{\{{u^j}\cont e_{j},v\cont c\}}$.

	First we check that for every $v\in \Sigma^*\setminus S$, the set of safe letters for $v$ is non-empty.
	To see this, consider some vertex $v\in \Sigma^*\setminus S$ such that $\X$ is not safe for $v$.  
	Let $u\in S$ witness that $\X$ is not safe for $v$. There are two cases depending on if $u\lexlt v$ or $v\lexlt u$. 
    \begin{enumerate}
        \item 
        If $u\lexlt v$, then we must have that $u\eltleq v$, but $u^\frown e_{i(u)}\not\eltleq v^\frown \X$, implying that $e_{i(u)} = \R$. We will show that $\R$ is safe for $v$.  
        Towards a contradiction, suppose that $w\in S$ witnessed that $\R$ was not safe for $v$. 
    \begin{enumerate}
        \item 
        If $v\lexlt w$, there are two possibilities.
        \begin{itemize}
            \item 
            $v\eltleq w$, but $v^\frown \R\not\eltleq w^\frown e_{i(w)}$. This implies that $e_{i(w)}\neq \R$. However, since $u\eltleq w$, we must have $u^\frown \R\eltleq w^\frown e_{i(w)}$, which implies $e_{i(w)} = \R$, a contradiction.
            \item 
            $\{v^\frown \R, w^\frown e_{i(w)}\}$ are incompatible. If $v\eltleq w$, this reduces to the case above. If $v\not\eltleq w$, then we must have $e_{i(w)} = \L$. But as $\{u^\frown \R, w^\frown \L\}$ are incompatible, this is a contradiction.
        \end{itemize}  
        \item 
        If $w\lexlt v$, the only possibility is that $w\not\prec v$, but $w^\frown e_{i(w)} \prec v^\frown\R$. This implies that $e_{i(w)} = \L$ and that $w\eltleq v$. We cannot have $u\lexlt w$ since $\{u^\frown R,w^\frown L\}$ would not be compatible, and we cannot have $u = w$ since $\R\neq \L$. So $w\lexlt u$; since $e_{i(w)} = \L$ and $e_{i(u)} = \R$, we must have $w\eltleq u$ (otherwise $\{w^\frown\L, u^\frown R\}$ would not be compatible) and $w\prec u$ (since $w^\frown \L\prec u^\frown \R$), contradicting that $u\eltleq v$ and $w\not\prec v$.  
    \end{enumerate}
    \item 
    If $v\lexlt u$, the argument is very similar to the above; one shows that $\L$ is safe for $v$.
    \end{enumerate}

	Now we define $e'_j$ to be $e_{i(v^j)}$ whenever $v^j\in S$; otherwise choose the first letter from $\X,\L,\R$ that is safe for $v^j$ (equivalently from $\X, \R, \L$, since if $\X$ is not safe, the argument above shows that exactly one of $\L$ or $\R$ is safe). 

	To verify that $e'$ is boring, consider some $0\leq i<j\leq m$. First observe that ${\Sigma^*_\ell}\cont e'$ contains no words of length $\ell$. Also, if $(v^i, v^j)$ are incompatible, then any one-letter extensions will yield an isomorphic level structure. So we may assume that $v^i,v^j\notin S$ and that $v^i$, $v^j$ are compatible. We have:
	\begin{enumerate}
		\item $(e'_i,e'_j)\neq (\R,\L)$. Towards a contradiction, assume that $(e'_i, e'_j) = (\R, \L)$, and let $u\in S$ be some vertex that made $\X$ unsafe for $v^i$ and $u'\in S$ be some vertex that made $\X$ unsafe for $v^j$. By the same analysis as above we have $u\lexlt v^i$, $e_{i(u)}=\R$, $u\not\perp v^i$, $v^j\lexlt u'$, $e_{i(u')}=\L$, $v^j\not\perp u'$. From this however we conclude that $u\lexlt u'$, $e_{i(u)}=\R$, $e_{i(u')}=\L$ which contradicts the definition of boring extension and our assumption that $u$ and $u'$ are compatible.
		\item $v^i\not\perp v^j\implies (e'_i, e'_j)\not\in \{(\X,\L), (\R, \X)\}$. Towards a contradiction, assume that $v^i\not\perp v^j$ and $(e'_i, e'_j) = (\X, \L)$, and denote by $u\in S$ some vertex that made $\X$ unsafe for $v^j$. Clearly $v^j\lexlt u$, $v^j\not\perp u$ and $e_{i(u)}=L$. It follows that for every $\ell'<\ell$ we have $v^i_{\ell'}\lleq v^j_{\ell'}\lleq u_{\ell'}$. From this we have that $u$ and $v^i$ are compatible and $u\not\perp v^i$ which makes $\X$ unsafe for $v^i$. A contradiction. A very similar argument shows that $(e'_i, e'_j)\neq (\R, \X)$.
		\item $v^i\not\preceq v^j\implies (e'_i,e'_j)\neq (\L,\R)$. Towards a contradiction, assume that $v^i\not\preceq v^j$ and $(e'_i, e'_j) = (\L, \R)$, and let $u\in S$ be some vertex that made $\X$ unsafe for $v^i$ and $u'\in S$ be the vertex that made $\X$ unsafe for $v^j$.  We have $v^i\lexlt u\lexlt u'\lexlt v^j$ and $e_{i(u)}=\L$, $e_{i(u')}=\R$. It follows that $v^i\not\perp u$, $u\preceq u'$, $u'\not\perp v^j$. Now for every $\ell'<\ell$ we also have $v^i_{\ell'}\llt u_{\ell'}\llt u'_{\ell'}\llt v^j_{\ell'}$, which implies that $v^i\preceq v^j$. A contradiction. \qedhere
	\end{enumerate}
\end{proof}

\begin{proposition}
	\label{prop:typeextend2}
	Let $0\leq \ell\leq \ell'$ be integers, and  let $e$ be a  boring extension  of $\Sigma^*_\ell=\{u^0\lexlt u^1\lexlt \cdots \lexlt u^{n-1}\}$. Put $\Sigma^*_{\ell'}=\{v^0\lexlt v^1\lexlt\cdots\lexlt v^{m-1}\}$ and create the word $e'$ of length $m$ by putting, for every $0\leq i<m$, $e'_i=e_j$ where $j$ satisfies $v^i|_{\ell}=u^j$. Then $e'$ is a boring extension of $\Sigma^*_{\ell'}$.
\end{proposition}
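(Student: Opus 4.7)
The plan is to verify that none of the three conditions in the definition of an interesting level is met for $S=\Sigma^*_{\ell'}\cont e'$ at level $\ell'$, so that $e'$ is boring. Condition~(3) is vacuous since every word of $S$ has length $\ell'+1$; condition~(1) amounts to showing the canonical bijection $v^i\mapsto v^i\cont e'_i$ is a level-structure isomorphism $\AmbStr{\Sigma^*_{\ell'}}\to\AmbStr{\Sigma^*_{\ell'}\cont e'}$; and condition~(2) amounts to showing that no compatible pair in $\Sigma^*_{\ell'}$ becomes incompatible after extension. Fix a pair $v^i\lexlt v^j\in\Sigma^*_{\ell'}$ and set $u^{j_1}=v^i|_\ell\lexleq v^j|_\ell=u^{j_2}$, so that $e'_i=e_{j_1}$ and $e'_j=e_{j_2}$; boringness of $e$ provides complete information about $(u^{j_1},u^{j_2})$ and its extension, which I will transport to $(v^i,v^j)$.

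Several sub-cases are routine. If $u^{j_1}=u^{j_2}$ then $e'_i=e'_j$ and Observation~\ref{obs:safeext} with $c=c'$ handles everything. If $u^{j_1}\lexlt u^{j_2}$ are incompatible, then $v^i,v^j$ are also incompatible (the violating position lies below $\ell$), so compatibility preservation is vacuous and $v^i\perp v^j$; the only $\preceq$-witnesses on either side lie strictly below $\ell'$, and all relations are preserved. If $u^{j_1}\lexlt u^{j_2}$ are compatible, boringness forces $(e_{j_1},e_{j_2})\neq(\R,\L)$; for $(e'_i,e'_j)\in\{(\L,\L),(\L,\X),(\X,\X),(\X,\R),(\R,\R)\}$ Observation~\ref{obs:safeext} applied to $v^i\lexlt v^j$ gives preservation of $\preceq$, $\perp$ and compatibility, while $\eltleq$ preservation is immediate from $e'_i\lexleq e'_j$. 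For $(e'_i,e'_j)\in\{(\X,\L),(\R,\X)\}$, boringness forbids $u^{j_1}\eltleq u^{j_2}$ (otherwise $\eltleq$ would be destroyed at level $\ell+1$), so $u^{j_1}\perp u^{j_2}$; the witnesses at positions below $\ell$ propagate to $v^i\perp v^j$, and all relations are preserved since neither $(\R,\L)$ nor $(\L,\R)$ appears at position $\ell'$.

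The main obstacle is the case $u^{j_1}\lexlt u^{j_2}$ compatible with $(e'_i,e'_j)=(\L,\R)$. Here boringness forbids creating a new $\preceq$-pair at level $\ell+1$: whenever $u^{j_1}\eltleq u^{j_2}$, position $\ell$ would witness $u^{j_1}\cont\L\prec u^{j_2}\cont\R$, so boringness of $e$ forces $u^{j_1}\preceq u^{j_2}$ already in $\Sigma^*_\ell$. This yields a witness $k_0<\ell$ with $(u^{j_1}_{k_0},u^{j_2}_{k_0})=(\L,\R)$, which also witnesses $v^i\preceq v^j$ and gives both directions of the equivalence $v^i\preceq v^j\iff v^i\cont\L\preceq v^j\cont\R$. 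For compatibility of the extension: if $v^i,v^j$ are compatible then so are $u^{j_1},u^{j_2}$, and boringness together with $(e_{j_1},e_{j_2})=(\L,\R)$ forces $u^{j_1}\cont\L,u^{j_2}\cont\R$ to be compatible, hence $u^{j_1}\eltleq u^{j_2}$ and consequently $u^{j_1}\preceq u^{j_2}$ with a witness $k_0<\ell$; condition~\ref{defn:comp:p2} of Definition~\ref{defn:comp} applied to the compatible pair $v^i,v^j$ via the $(\L,\R)$ at position $k_0<\ell'$ then forces $v^i\eltleq v^j$, which is precisely what is needed to conclude compatibility of $v^i\cont\L,v^j\cont\R$ at position $\ell'$. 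The preservation of $\eltleq$ itself is immediate from $\L\lexleq\R$.
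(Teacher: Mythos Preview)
Your proof is correct and follows essentially the same verification as the paper's, only organized as an explicit enumeration of the possible pairs $(e'_i,e'_j)$ rather than the paper's two reductions. The paper first restricts to compatible $v^i,v^j$ (observing that for incompatible pairs any one-letter extension yields an isomorphic level structure), then to $e'_i\neq e'_j$, and finally extracts from boringness of $e$ the single dichotomy ``either $(e'_i,e'_j)\notin\{(\L,\R),(\R,\L)\}$, or $\preceq$ already holds on $\{v^i|_\ell,v^j|_\ell\}$,'' from which preservation of $\preceq$, $\eltleq$, and compatibility all follow at once. Your case split on the compatibility of $u^{j_1},u^{j_2}$ and the eight values of $(e'_i,e'_j)$ reaches the same conclusions via the same mechanism---in particular, your handling of the $(\L,\R)$ sub-case (deriving $u^{j_1}\eltleq u^{j_2}$ from compatibility preservation, then $u^{j_1}\prec u^{j_2}$ from level-structure preservation, then pushing the witness $k_0<\ell$ up to $v^i,v^j$) is exactly the content of the paper's dichotomy, just unpacked. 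One small expository point: in your first $(\L,\R)$ paragraph you phrase the derivation of the witness $k_0$ as conditional on $u^{j_1}\eltleq u^{j_2}$, but since you later (in the compatibility paragraph) derive that this hypothesis is forced by boringness whenever $u^{j_1},u^{j_2}$ are compatible, it would read more cleanly to state that up front.
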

\begin{proof}
    The proof is just a verification of the fact that the relations $\lexleq$, $\preceq$ and $\trianglelefteq$ are determined by the first occurrences of certain combinations of letters which this construction does not change.
    
    If $(v^i, v^j)$ are incompatible, then any one-letter extensions will yield an isomorphic level structure. So we may assume that $(v^i, v^j)$ are compatible.

	Let $v^i,v^j$ be compatible words.  We then check that the structure $\AmbStr{\{v^i,v^j\}}$ is isomorphic to $\AmbStr{\{{v^i}\cont e'_i,{v^j}\cont e'_j\}}$ and that ${v^i}\cont e'_i$ and ${v^j}\cont e'_j$ are compatible. If $e'_i=e'_j$ (in particular, this happens whenever $v^i|_\ell=v^j|_\ell$), the result is clear.
 
    So suppose that $e'_i\neq e'_j$; in particular, this implies that $v^i|_\ell\neq v^j|_\ell$. Note that in this case, the lexicographic order of $v^i$ and $v^j$ is already determined by their restrictions to level $\ell$ (that is, $v^i \lexlt v^j \iff v^i|_\ell\lexlt v^j|_\ell$), hence $(\{v^i|_\ell,v^j|_\ell\},\lexleq)$, $(\{v^i,v^j\},\lexleq)$, and $(\{{v^i}\cont e'_i,{v^j}\cont e'_j\},\lexleq)$ are isomorphic. Consequently, the compatibility of ${v^i}\cont e'_i$ and ${v^j}\cont e'_j$ follows from the compatibility of $v^i$ and $v^j$ and the compatibility of ${v^i|_\ell}\cont e'_i$ and ${v^j|_\ell}\cont e'_j$.

    The fact that $e$ is a boring extension tells us that $\AmbStr{\{v^i|_\ell,v^j|_\ell\}}$ is isomorphic to $\AmbStr{\{{v^i|_\ell}\cont e'_i,{v^j|_\ell}\cont e'_j\}}$. This implies that either $(e'_i,e'_j)\notin \{(L,R),(R,L)\}$ or $\preceq$ is already defined on $\{v^i|_\ell,v^j|_\ell\}$. In either case, $(\{v^i,v^j\},\preceq)$ and $(\{{v^i}\cont e'_i,{v^j}\cont e'_j\},\preceq)$ are isomorphic. A similar argument can be done for $\trianglelefteq$, and $\AmbStr{\{v^i,v^j\}}$ is isomorphic to $\AmbStr{\{{v^i}\cont e'_i,{v^j}\cont e'_j\}}$, that is, $e'$ is a boring extension of $\Sigma^*_{\ell'}$.
\end{proof}

\section{Upper bounds}
We will prove a Ramsey-type theorem for the following kind of embeddings:
\begin{definition}[Shape-preserving functions]
	Given $S\subseteq \Sigma^*$ we call function $f\colon S\to \Sigma^*$ \emph{shape-preserving} if  $\tau_S(w)=\tau_{f[S]}(f(w))$ (see Definition~\ref{embtype})
for every $w\in S$. 
\end{definition}

We will generally consider shape-preserving functions only for those sets $S$ satisfying $S=\tau(S)$ (that is for \emph{embedding types}). However, the next observation follows from the definition without this extra assumption:

\begin{observation}\label{obs:shape}
	Let $f\colon S\to \Sigma^*$ be shape-preserving.
	\begin{enumerate}[label=(\roman*)]
		\item\label{item:composition} For every shape-preserving $h\colon f[S]\to \Sigma^*$ it holds that $h\circ f$ is shape-preserving.
		\item\label{item:length} For all $u,v\in S$, $|u|\leq |v|$ implies that $|f(u)|\leq |f(v)|$.
		\item\label{item:succ} For all $u,v\in S$, $u\sqsubseteq v$ implies that $f(u)\sqsubseteq f(v)$.
		\item\label{item:emb} The function $f$ is an embedding $f\colon \AmbStr{S}\to\AmbStr{\Sigma^*}$ and images of pairs of compatible words are also compatible.
		\item\label{item:boring} If $S$ is a level set (that is, all words in $s$ have same length) and $e$ is a boring extension of $S$, then $e$ is a boring extension of $f[S]$.
	\end{enumerate}
\end{observation}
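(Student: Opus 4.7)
The plan is to verify each of the five items by reducing to the single defining equation $\tau_S(w)=\tau_{f[S]}(f(w))$ together with basic combinatorial facts about the $\tau$ operation and interesting levels. Item~\ref{item:composition} is immediate: chain the defining equation to obtain $\tau_S(w)=\tau_{f[S]}(f(w))=\tau_{(h\circ f)[S]}(h(f(w)))$, using that $h$ is shape-preserving on its domain $f[S]$.

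Items~\ref{item:length} and~\ref{item:succ} hinge on the observation that, for every $w\in S$, the index $|w|$ is an interesting level of $S$ by the third clause of the definition of interesting levels. For~\ref{item:length}, if $|u|<|v|$ then the interval $[|u|,|v|)$ contains at least one interesting level of $S$ (namely $|u|$), forcing $|\tau_S(u)|<|\tau_S(v)|$; transferring this through shape-preservation and using monotonicity of $n\mapsto|I(f[S])\cap[0,n)|$ gives $|f(u)|\leq|f(v)|$. For~\ref{item:succ}, $u\sqsubseteq v$ yields $\tau_S(u)\sqsubseteq\tau_S(v)$ and hence $\tau_{f[S]}(f(u))\sqsubseteq\tau_{f[S]}(f(v))$, so $f(u)$ and $f(v)$ agree at every index in $I(f[S])\cap[0,|f(u)|)$. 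At a non-interesting index $j<|f(u)|$ of $f[S]$, the first and third clauses of the definition both fail, which forces the restriction map $\overline{f[S]}_{j+1}\to\overline{f[S]}_j$ to be a bijection (surjective because no word of $f[S]$ has length $j$, and injective because the two level sets have equal cardinality). Consequently the one-step extension in $\overline{f[S]}$ is uniquely determined at every such $j$, and an induction on $j$ yields $f(u)=f(v)|_{|f(u)|}$.

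For~\ref{item:emb}, the crucial preliminary is that any two distinct $u,v\in S$ first differ at an index $i^*\in I(S)$: at such $i^*$ the restriction map $\overline{S}_{i^*+1}\to\overline{S}_{i^*}$ is non-injective (the distinct extensions $u|_{i^*+1},v|_{i^*+1}$ both lie above the common prefix $u|_{i^*}=v|_{i^*}$), so either the first or third clause of the interesting-level definition must hold. Each of the relations $\lexleq$, $\preceq$, $\eltleq$ on $S$ is detected by witnesses at specific indices (the first differing index for $\lexleq$, the witness $i(u,v)$ for $\prec$, and any common coordinate for $\eltleq$), all of which one shows lie in $I(S)$ by the same argument. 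A short case analysis combining this with~\ref{item:succ} then shows the matching witnesses appear in $f[S]$ with identical characters, so each relation, and compatibility, transfers in both directions. Item~\ref{item:boring} then follows because by~\ref{item:emb} the map $f$ is an isomorphism $\AmbStr{S}\to\AmbStr{f[S]}$, and the conditions defining non-interestingness of the level introduced by $S\cont e$ versus $f[S]\cont e$ depend only on this ambient structure and on the characters of~$e$, hence boringness transfers.

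I expect the main obstacle to be the branching lemma used in~\ref{item:succ}---that non-interesting levels of $f[S]$ have uniquely determined one-step extensions in $\overline{f[S]}$---obtained from the interplay of the failures of the first and third clauses of the interesting-level definition. This same lemma underlies the preliminary step of~\ref{item:emb}, and once it is in hand the remaining items reduce to bookkeeping.
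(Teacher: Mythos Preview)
The paper offers no proof of this observation, remarking only that it ``follows directly from the definition.'' Your detailed verification is essentially correct and supplies what the paper omits; the branching lemma you isolate for~\ref{item:succ} (that at a non-interesting level the one-step restriction $\overline{f[S]}_{j+1}\to\overline{f[S]}_j$ is a bijection, since levels of $\Sigma^*$ are finite) is indeed the crux.

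Two places need tightening. In~\ref{item:emb}, the phrase ``by the same argument'' is imprecise for the $\prec$-witness $i(u,v)$: unlike the first-differing index, $i(u,v)$ need not make the restriction map non-injective. What you actually want is that at a non-interesting level $j$ the restriction $\overline{S}_{j+1}\to\overline{S}_j$ is not merely a bijection but the \emph{unique} $\lexleq$-preserving bijection between these finite linear orders, and hence a full level-structure isomorphism; then $u|_{i+1}\prec v|_{i+1}$ with $u|_i\not\prec v|_i$ forces $i$ to be interesting. The same refinement handles $\eltleq$ and compatibility.

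In~\ref{item:boring} there is a genuine slip: you claim that non-interestingness of the new level ``depends only on the ambient structure and on the characters of~$e$,'' but clause~(2) of the definition of interesting level speaks of compatibility, which is \emph{not} a relation in $\AmbStr{\cdot}$ and cannot be recovered from it (compare $\{\X\R,\R\L\}$ with $\{\L\X,\X\L\}$: identical level structures, but only the second pair is compatible). To push~\ref{item:boring} through you must invoke the two-sided transfer of compatibility you already argued in~\ref{item:emb}; once you know $f$ reflects compatibility as well as preserves it, clause~(2) transfers and the argument closes.
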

Given $S\subseteq \Sigma^*$ and $\ell>0$ we denote by $S_{\leq\ell}=\cup_{i\leq \ell} S_i$ the set of all words in $S$ of length at most $\ell$. We also put $S_{<\ell}=S_{\leq\ell-1}$.
\begin{remark}
	If $S=\Sigma^*_{<\ell}$ for some $\ell>0$, then the images of shape-preserving functions are always strong subtrees in the sense of Milliken's tree theorem~\cite{todorcevic2010introduction}. However, the converse is not true:  For example, the function $f\colon \Sigma^*_{<2}\to\Sigma^*$ given by $f(\emptyset) = \emptyset$, $f(\L)= \L\L$, $f(\X)= \X\R$, and $f(\R)= \R\R$ is not shape-preserving because all three levels 0, 1 and 2 of $f[\Sigma^*_{<2}]$ are interesting, while $\im(f)$ is a strong subtree.
\end{remark}
Given $S,S'\subseteq \Sigma^*$ we denote by $\Shape{S}{S'}$ the set of all shape-preserving functions $f\colon S\to \Sigma^*$ such that $f[S]\subseteq S'$.
Given $n< \omega$, we denote by $\nShape{n}{S}{S'}$ the set of all functions in $\Shape{S}{S'}$ which are the identity when restricted to $S_{<n}$. For a shape-preserving function $g\colon S\to\Sigma^*$, Observation~\ref{obs:shape}~\ref{item:length} tells us that the function $\widetilde{g}\colon \{|w|:w\in S\}\to\omega$ defined by $\widetilde{g}(i)=|g(w)|$ for some (equivalently any) $w\in S$ with $|w|=i$ is well defined. If $S$ is finite, denote by $\max(S)$ the ``last'' level $S_k$ where $k=\max_{w\in S}|w|$.

\begin{observation}
	\label{obs:towords}
	Let $S=\overline S=\tau(S)$ be a finite subset of $\Sigma^*$, and write  $\max(S)= S_k = \{u^0\lexlt\allowbreak u^1\lexlt \cdots\allowbreak \lexlt u^n\}$. Then there is a one-to-one correspondence between $\Shape{S}{\Sigma^*}$ and the set $\Shape{S_{<k}}{\Sigma^*}\times \Pi^*_{\max(S)}$ (recall Definition~\ref{Def:Boring}):
	\begin{enumerate}
		\item
			For every $g\in \Shape{S}{\Sigma^*}$ there exists $w\in \Pi^*_{\max(S)}$ such that for every $0\leq i<n$ it holds that  $g(u^i)=g^0(u^i|_{k-1})\cont {u^i_{k-1}}\cont {w^0_i}\cont\cdots\cont w^{|w|-1}_i$.
		\item
		      Conversely also for every $g^0\in \Shape{S_{<k}}{\Sigma}$ and every word $w\in \Pi^*_{\max(S)}$ the function $g'\colon S\to \Sigma^*$ defined by $g'(w)=g^0(w)$ for $|w|<k$ and $$g'(u^i)=g^0(u^i|_{k-1})\cont {u^i_{k-1}}\cont {w^0_i}\cont\cdots\cont w^{|w|-1}_i$$ is shape-preserving.
	\end{enumerate}
\end{observation}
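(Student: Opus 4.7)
The plan is to establish the bijection by exhibiting its two directions explicitly and verifying that they land in the claimed sets and are mutually inverse. The central point is that since $S=\tau(S)=\overline{S}$, every level $0,1,\dots,k-1$ of $\overline{S}$ is interesting; hence, for any $g\in\Shape{S}{\Sigma^*}$, the interesting levels of $g[S]$ lying below $\widetilde{g}(k)$ are exactly $\{\widetilde{g}(i):0\leq i<k\}$, and the character of $g(w)$ at position $\widetilde{g}(i)$ coincides with $w_i$. In particular the character of $g(u^i)$ directly following $g(u^i|_{k-1})$ must equal $u^i_{k-1}$, and the remaining $\widetilde{g}(k)-\widetilde{g}(k-1)-1$ characters of $g(u^i)$ occupy non-interesting positions of $g[S]$.

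For the forward direction, given $g\in\Shape{S}{\Sigma^*}$ I would set $g^0=g|_{S_{<k}}$ and, for each $0\leq j<\widetilde{g}(k)-\widetilde{g}(k-1)-1$, define $w^j$ by letting $w^j_i$ be the character of $g(u^i)$ at position $\widetilde{g}(k-1)+1+j$. The restriction $g^0$ is shape-preserving by inspection, since the interesting levels of $S_{<k}$ agree with those of $S$ in the relevant range and $g^0(w)=g(w)$ retains the correct embedding type. To verify that each $w^j$ is a boring extension of $\max(S)$, I would use that the corresponding position in $g[S]$ is non-interesting (by the first paragraph), so the single-character step is boring for the image level, and then transport through the level-structure isomorphism induced by $g|_{\max(S)}$ (Observation~\ref{obs:shape}~\ref{item:emb}) together with Observation~\ref{obs:shape}~\ref{item:boring} to obtain a boring extension of $\max(S)$ itself.

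For the reverse direction, given a pair $(g^0,w)$ I would define $g'$ by the stated formula and verify shape-preservation. For vertices of $S_{<k}$ this reduces immediately to $g^0$ being shape-preserving. For $u^i\in\max(S)$ I would build $g'(u^i)$ one character at a time starting from $g^0(u^i|_{k-1})$: appending the character $u^i_{k-1}$ creates a new interesting level whose structure is isomorphic to $\overline{\str{S}}_k$, and each subsequent application of a boring extension $w^j\in\Pi_{\max(S)}^\star$ produces a non-interesting level via Observation~\ref{obs:shape}~\ref{item:boring}. Reading off the characters of $g'(u^i)$ at the resulting interesting positions then recovers $u^i$, giving $\tau_{g'[S]}(g'(u^i))=u^i=\tau_S(u^i)$ as needed.

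The main technical obstacle will be the first step of the reverse verification — showing that appending $u^i_{k-1}$ pointwise to $\{g^0(u^i|_{k-1}):0\leq i\leq n\}$ produces a level structure isomorphic to $\overline{\str{S}}_k$. Intuitively this holds because the characters $u^i_{k-1}$ are precisely those that create the transition from $\overline{\str{S}}_{k-1}$ to $\overline{\str{S}}_k$, and $g^0$ transfers the level structure $\overline{\str{S}}_{k-1}$ faithfully together with this choice of appended characters; a routine case analysis on the letters in $\Sigma$ confirms that the two extensions agree. Once this is established, that the two constructions are mutually inverse is tautological from their definitions.
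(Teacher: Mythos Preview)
Your proposal is correct and follows essentially the same approach as the paper: both exploit that since $S=\tau(S)=\overline{S}$, every level $0,\dots,k-1$ of $S$ is interesting (so the positions $\widetilde{g}(k-1)+1,\ldots,\widetilde{g}(k)-1$ of $g[S]$ are not, and furnish the boring extensions), and both appeal to Proposition~\ref{prop:typeextend2} (in your case via Observation~\ref{obs:shape}~\ref{item:boring}) for the converse. The only cosmetic difference is that the paper frames the forward direction as a minimality/contradiction argument, whereas you extract the word $w$ directly from the characters of $g(u^i)$ at the non-interesting positions.
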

\begin{proof}
	To see the first statement, assume the contrary, and let $g\in \Shape{S}{\Sigma^*}$ be a function for which there is no $w\in \Pi^*_{\max(S)}$ as in $(1)$. Among all such functions $g$ choose one which minimizes $\widetilde{g}(k)$. 

	Because $S=\ol{S}$ we know that $\tilde{g}(k-1)\in I(g[S])$. Because $g$ is shape-preserving it follows that $g(u^i)_{\tilde{g}(k-1)}=u^i_{k-1}$ and $g(u^i)|_{\tilde{g}(k-1)+1}=g(u^i|_{k-1})\cont {u^i_{k-1}}$.
	It follows that $\widetilde{g}(k)\geq \widetilde{g}(k-1)+2$.

	Notice that $\widetilde{g}(k)=\widetilde{g}(k-1)+2$: As $g$ is shape-preserving, $I(g(S))$ contains no levels between $\widetilde{g}(k-1)$ and $\widetilde{g}(k)$ as otherwise we could remove them, getting a counter example $g'$ with smaller $\widetilde{g'}(k)$. If $\widetilde{g}(k) = \widetilde{g}(k-1)+1$, then we can take $w = \emptyset$, and $g$ would not be a counterexample. But now, observe that level $\widetilde{g}(k)+1$ of $g(S)$ is not interesting and thus corresponds to a boring extension in $\Pi^*_{\max(S)}$, which gives a contradiction. 

	The second statement follows by Proposition~\ref{prop:typeextend2}.
\end{proof}

The following pigeonhole lemma is a consequence of Theorem~\ref{thm:CS}.
\begin{lemma}
	\label{lem:pigeonhole}
	Let $S=\overline{S}=\tau(S)$ be a finite non-empty subset of $\Sigma^*$ of mutually compatible words containing at least one non-empty word. Put $k=\max_{w\in S}|w|$. Let $g^0\in \Shape{S_{<k}}{\Sigma^*}$. Denote by $G$ set of all $g\in \Shape{S}{\Sigma^*}$ extending $g^0$ and put $K=\widetilde{g}^0(k-1)$. Then for every finite colouring $\chi\colon G\to \{0,1,\ldots,r-1\}$ there exists $f\in \nShape{K+1}{\Sigma^*}{\Sigma^*}$ such that
	$\chi$ restricted to $\Shape{S}{f[\Sigma^*]}\cap G$ is constant.
\end{lemma}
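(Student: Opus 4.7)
The plan is to reduce the lemma to a single application of the Carlson--Simpson theorem (Theorem~\ref{thm:CS}) over the finite alphabet $\Pi_{\max(S)}$ and then repackage its output as the desired $f$. By Observation~\ref{obs:towords}, fixing $g^0$ gives a bijection between $G$ and $\Pi^*_{\max(S)}$: each $g\in G$ corresponds to a sequence $w=(w^0,\ldots,w^{|w|-1})$ of boring extensions of $\max(S)$ via $g(u^i)=g^0(u^i|_{k-1})\cont u^i_{k-1}\cont w^0_i\cont\cdots\cont w^{|w|-1}_i$. Pulling $\chi$ back along this bijection produces a finite coloring of $\Pi^*_{\max(S)}$, and Theorem~\ref{thm:CS} then yields an infinite-parameter word $W$ over $\Pi_{\max(S)}$ with $W[\Pi^*_{\max(S)}]$ monochromatic. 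It remains to produce $f\in\nShape{K+1}{\Sigma^*}{\Sigma^*}$ such that, under the above bijection, $\Shape{S}{f[\Sigma^*]}\cap G$ corresponds precisely to $W[\Pi^*_{\max(S)}]$.

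For that, set $T=\{t^i:=g^0(u^i|_{k-1})\cont u^i_{k-1}:0\leq i<n\}\subseteq\Sigma^*_{K+1}$. Since $g^0$ is shape-preserving and $S$ consists of mutually compatible words, Observation~\ref{obs:shape}\ref{item:emb} implies that $T$ is mutually compatible and has a level structure isomorphic to that of $\max(S)$ via $u^i\leftrightarrow t^i$, identifying $\Pi_{\max(S)}$ with $\Pi_T$. I would then apply Proposition~\ref{prop:typeextend} to each non-parameter letter of $W$, promoting it to a boring extension of all of $\Sigma^*_{K+1}$ whose restriction to $T$ recovers the original letter, while leaving the parameters of $W$ untouched; denote the resulting infinite-parameter word over $\Pi_{\Sigma^*_{K+1}}$ by $W^*$, and let $p_0<p_1<\cdots$ be the positions of its parameters. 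Define $f\colon\Sigma^*\to\Sigma^*$ to be the identity on $\Sigma^*_{\leq K+1}$ and, for $u\in\Sigma^*_{K+1}$ and $v=v_0\cdots v_{\ell-1}\in\Sigma^\ell$ with $\ell\geq 1$, set $f(u\cont v)=u\cont e$, where for each $0\leq r\leq p_{\ell-1}$ the $r$-th character of $e$ is the character of $W^*_r$ assigned to $u$ when $W^*_r\in\Pi_{\Sigma^*_{K+1}}$, and is $v_j$ when $W^*_r=\lambda_j$. Invoking Proposition~\ref{prop:typeextend2} one checks that the non-parameter positions of $W^*$ contribute non-interesting levels to $f[\Sigma^*]$ while each parameter position contributes one interesting (tripling) level of $\Sigma^*$, so $f$ is shape-preserving and lies in $\nShape{K+1}{\Sigma^*}{\Sigma^*}$.

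Finally I would identify $\Shape{S}{f[\Sigma^*]}\cap G$ with $W[\Pi^*_{\max(S)}]$. For any $g$ in the intersection, $f$ being the identity on $\Sigma^*_{\leq K+1}$ and $\sqsubseteq$-preserving forces $g(u^i)=f(t^i\cont v^i)$ for a common-length $v^i\in\Sigma^\ell$. Reading off the characters appended above $t^i$ and using that each promoted letter of $W^*$ restricts on $T$ to the original letter of $W$, the boring-extension sequence $w$ of $g$ satisfies $w^r=W_r$ at every non-parameter position and $w^r=(v^0_j,\ldots,v^{n-1}_j)$ at each $W^*_r=\lambda_j$; in particular $s^j:=(v^0_j,\ldots,v^{n-1}_j)$ lies in $\Pi_{\max(S)}$ because $g\in G$, so $s\in\Pi^*_{\max(S)}$ and $w=W(s)$, and conversely every $s\in\Pi^*_{\max(S)}$ produces such a $g$. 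Monochromaticity then transfers from $W[\Pi^*_{\max(S)}]$ to $\Shape{S}{f[\Sigma^*]}\cap G$. The main obstacle I anticipate is the bookkeeping in this last paragraph: verifying that the pointwise definition of $f$ really yields a shape-preserving function and that the letter-by-letter promotion of $W$ to $W^*$ makes the equation $w=W(s)$ hold coordinate-by-coordinate.
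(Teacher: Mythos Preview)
Your overall strategy coincides with the paper's: reduce to the Carlson--Simpson theorem via Observation~\ref{obs:towords}, promote each non-parameter letter of $W$ to a boring extension of all of $\Sigma^*_{K+1}$ using Proposition~\ref{prop:typeextend}, and assemble $f$ from this data (the paper does this level by level rather than by forming $W^*$ all at once, but that is purely organisational).

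There is, however, a genuine off-by-one error in your explicit formula for $f$ that prevents it from being shape-preserving. You declare $f$ to be the identity on $\Sigma^*_{\leq K+1}$, so every $u\in\Sigma^*_{K+1}$ lies in $f[\Sigma^*]$ with $|u|=K+1$; by clause~(3) of the definition of interesting level, level $K+1$ of $f[\Sigma^*]$ is therefore interesting. On the other hand the first free character $v_0$ is placed at position $p_0$ of $e$, so the first tripling level above $K$ is $K+1+p_0$, which is also interesting. Hence for $w=u\cont v_0\in\Sigma^*_{K+2}$ the word $f(w)$ has interesting indices $0,\ldots,K,\,K+1,\,K+1+p_0$ below its length, and $\tau_{f[\Sigma^*]}(f(w))$ has length $K+3$ rather than $K+2$ whenever $p_0>0$. (Concretely, the extra retained character is $(W^*_0)_u$, not any coordinate of $w$.)

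The paper avoids this by making $f$ the identity only on $\Sigma^*_{\leq K}$ and already pushing each $u\in\Sigma^*_{K+1}$ through the initial run of boring letters of $W$; the leaf for level $K+1$ of $\Sigma^*$ and the branching to level $K+2$ then occur at the \emph{same} level of $f[\Sigma^*]$. In your formulation the fix is to take $f$ to be the identity only on $\Sigma^*_{\leq K}$, and for $u\in\Sigma^*_{K+1}$ and $v\in\Sigma^\ell$ with $\ell\geq 0$ to let $e$ range over positions $0\leq r<p_\ell$ (so that $f(u)$ already has length $K+1+p_0$). With this correction your argument goes through and is essentially identical to the paper's.
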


\begin{proof}
	By Observation~\ref{obs:towords}, the colouring $\chi\colon G\to \{0,1,\ldots, r-1\}$ gives rise to a colouring $\chi'\colon \Pi^*_{\max(S)}\to\{0,1,\ldots,r-1\}$. Apply Theorem~\ref{thm:CS} to obtain $W$ such that $W[\Pi^*_{\max(S)}]$ is monochromatic with respect to $\chi'$. In order to avoid special cases in the upcoming construction, we will assume that $W$ is indexed from 1 and not from 0. 

	For every $u\in \Sigma^*_{\leq K}$ put $f(u)=u$. We will construct the rest of $f$ by induction on levels. Now assume that $f(\Sigma^*_{i-1})$ is already defined for some $i>K$. Put 
	\begin{align*}
	    I = \begin{cases}
	            0 \quad &\text{if } i = K+1,\\
		    \min\{I'< \omega: W_{I'} = \lambda_{i-K-2}\} \quad &\text{if } i > K+1.
	        \end{cases}
	\end{align*}
	Let $J$  be the minimal integer such that $W_{J}=\lambda_{i-K-1}$.
	Now define a sequence of functions $f^{i'}\colon \Sigma^*_i\to\Sigma^*_{K+I+i'}$ for every $I\leq i'< J$. Put $f^{I}(u)=f(u|_{i-1})\cont u_{i-1}$  for every $u\in \Sigma^*_i$.
 Now proceed by induction on $i'$.
	Assume that $f^{i'-1}$ is constructed for some $I<i'<J$ and consider two cases:
	\begin{enumerate}
		\item $W_{i'}=\lambda_j$: Put $f^{i'}(u)=f^{i'-1}(u)\cont u_{j+k+1}$ for every $u\in \Sigma^*_i$. 
		\item $W_{i'}=e$ for some $e\in \Pi_S$:
		      Let $e'$ be the extension of $\Sigma^*_{K+1}$ given by Proposition~\ref{prop:typeextend} 
			for the boring extension $e$ of $g^1(S_k)$, where $g^1$ is defined by putting $g^1(u)\mapsto g^0(u|_{k-1})\cont u_{k-1}$ (by Observation~\ref{obs:shape}~(\ref{item:boring}), boringness of an extension is preserved by shape-preserving functions).
		      Now let $e''$ be the extension given by Proposition~\ref{prop:typeextend2} for the extension $e'$ and level $i$. Enumerate $\Sigma^*_{i}=\{u^0\lexlt u^1\lexlt\cdots\lexlt u^{m-1}\}$ and for $u^j\in S_i$, 
		      put $f^{i'}(u^j)=f^{i'-1}(u)\cont e''_j$ for every $0\leq j<m$.
	\end{enumerate}
	Finally put $f(u)=f^{J-1}(u)$.

	Observe that all levels added by the rules 1 and 2 above are uninteresting since they are either constructed from boring extensions or they are duplicates of levels introduced earlier (in the sense of Observation~\ref{obs:duplicatelevels}).  The last level is interesting because $\tau(\Sigma^*)=\Sigma^*$. From this we get  $f\in \nShape{K+1}{\Sigma^*}{\Sigma^*}$.

 To see that  $\chi$ restricted to $\Shape{S}{f[\Sigma^*]}\cap G$ is constant, pick an arbitrary $g \in \Shape{S}{f[\Sigma^*]}\cap G$. By Observation~\ref{obs:towords} we can decompose $g$ to $g^0$ and a word $w\in \Pi^*_{\max(S)}$ such that $\chi(g)$ is equal to $\chi'(w)$. From our construction it follows that $w\in W(\Sigma^*)$, and so indeed $\chi$ restricted to $\Shape{S}{f[\Sigma^*]}\cap G$ is constant.
\end{proof}

\begin{observation}\label{obs:nobar}
	For every $S=\tau(S)\subseteq \Sigma^*$ and every $f\in \Shape{S}{\Sigma^*}$ there is a unique function $g\in \Shape{\overline{S}}{\Sigma^*}$ extending $f$. It is constructed by putting $g(w|_\ell)=f(w)|_{\widetilde{f}(\ell)}$ for every $w\in S$ and $\ell\leq |w|$. Similarly, for every $g\in \Shape{\overline{S}}{\Sigma^*}$ it holds that $g\restriction S\in \Shape{S}{\Sigma^*}$.
\end{observation}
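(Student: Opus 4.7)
Since $S=\tau(S)$, no character of any $w\in S$ is deleted by $\tau_S$, so every index below $\max_{w\in S}|w|$ lies in $I(S)$; from this one checks that $I(S)=I(\overline{S})$ and that $\tau_{\overline{S}}$ acts as the identity on $\overline{S}$. The first step of my plan is to extend $\widetilde{f}$ from the set of lengths appearing in $S$ to every $\ell\leq\max_{w\in S}|w|$ by picking any $w\in S$ with $|w|\geq\ell$ and declaring $\widetilde{f}(\ell)$ to be the unique $j$ satisfying $\tau_{\overline{f[S]}}(f(w)|_j)=w|_\ell$. Shape-preservation of $f$ ensures that this value does not depend on the choice of $w$.

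For uniqueness, suppose $g\in\Shape{\overline{S}}{\Sigma^*}$ extends $f$. For every $w\in S$ and $\ell\leq|w|$, Observation~\ref{obs:shape}~\ref{item:succ} forces $g(w|_\ell)\sqsubseteq g(w)=f(w)$, so $g(w|_\ell)=f(w)|_j$ for some $j$. The shape-preservation condition on $g$ combined with the identity $\tau_{\overline{S}}(w|_\ell)=w|_\ell$ then yields $\tau_{\overline{f[S]}}(f(w)|_j)=w|_\ell$, which identifies $j$ as $\widetilde{f}(\ell)$ and hence matches the stated formula.

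For existence, I would take the stated formula as the definition of $g$ and verify well-definedness as the crucial step: whenever $w|_\ell=w'|_\ell$ for $w,w'\in S$, we need $f(w)|_{\widetilde{f}(\ell)}=f(w')|_{\widetilde{f}(\ell)}$. Both prefixes have $\tau_{\overline{f[S]}}$-image equal to $w|_\ell$ by construction, and any discrepancy at a non-interesting index $i<\widetilde{f}(\ell)$ of $\overline{f[S]}$ would contradict $i\notin I(\overline{f[S]})$ by introducing either a change in level structure or a new incompatibility. That $g$ is shape-preserving then follows immediately from how $\widetilde{f}$ was defined, since $\tau_{\overline{f[S]}}(g(w|_\ell))=\tau_{\overline{f[S]}}(f(w)|_{\widetilde{f}(\ell)})=w|_\ell=\tau_{\overline{S}}(w|_\ell)$.

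For the converse direction, restricting any $g\in\Shape{\overline{S}}{\Sigma^*}$ to $S$ yields a shape-preserving map because $I(S)$ and $I(\overline{S})$ agree on the relevant range of indices, and similarly $I(g[S])$ agrees with $I(\overline{g[\overline{S}]})=I(\overline{f[S]})$ below $|g(w)|$ for every $w\in S$. The main obstacle is the well-definedness step in the existence argument: one must argue carefully that non-interesting levels in $\overline{f[S]}$ force agreement between the prefixes of $f(w)$ and $f(w')$ whenever their $\tau$-images coincide, which is the only place where the structure of interesting levels interacts nontrivially with the shape-preservation of $f$.
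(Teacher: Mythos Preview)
Your overall architecture is sound and considerably more thorough than the paper, which offers only the one-line remark ``Notice that $g$ is well defined by Observation~\ref{obs:shape}~\ref{item:succ}'' with no further argument. You correctly identify that one must extend $\widetilde{f}$ to all $\ell\leq\max_{w\in S}|w|$, and your strategy for well-definedness---showing that the first index where $f(w)$ and $f(w')$ diverge must be interesting, contradicting agreement of their $\tau$-images---is exactly right.

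However, there is a genuine error in your notation that, as written, breaks the argument. You repeatedly write $\tau_{\overline{f[S]}}$, but observe that $\overline{f[S]}$ is prefix-closed, so condition~(3) in the definition of interesting levels (``there is $u\in S$ with $|u|=i$'') is satisfied at \emph{every} level. Hence $I(\overline{f[S]})$ contains all indices and $\tau_{\overline{f[S]}}$ is the identity map. With this reading, your defining equation $\tau_{\overline{f[S]}}(f(w)|_j)=w|_\ell$ becomes $f(w)|_j=w|_\ell$, which is generally false. What you need throughout is the operation that deletes indices not in $I(f[S])$; one may regard this as the natural extension of $\tau_{f[S]}$ from $f[S]$ to arbitrary prefixes. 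Your earlier claim $I(S)=I(\overline{S})$ is correct because $S=\tau(S)$ forces all indices to be interesting already, but $f[S]\neq\tau(f[S])$ in general, so the same identification fails on the target side.

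A related issue: even with the corrected deletion operation, the $j$ satisfying your condition is not unique---any $j$ strictly between consecutive interesting levels of $f[S]$ gives the same $\tau$-image. Uniqueness of $g$ is recovered by noting that shape-preservation of $g$ forces $|g(u)|\in I(f[S])$ for every $u\in\overline{S}$: otherwise condition~(3) would make $|g(u)|$ newly interesting for $g[\overline{S}]$, and then $\tau_{g[\overline{S}]}(f(w))$ would be strictly longer than $w$ for $w\in S$. This pins down $\widetilde{f}(\ell)$ as the $(\ell{+}1)$-st element of $I(f[S])$, and one checks this agrees with the original definition when $\ell$ is a length occurring in $S$. With these two fixes your plan goes through; the remaining steps (including the verification that $\overline{g[\overline{S}]}=\overline{f[S]}$, needed to identify $I(g[\overline{S}])$ with $I(f[S])$) are routine.
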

Notice that $g$ in Observation~\ref{obs:nobar} is well defined by Observation~\ref{obs:shape}~\ref{item:succ}.

\begin{theorem}
	\label{thm:multCS}
	For every finite set $S=\tau(S)\subseteq \Sigma^*$ of mutually compatible words and every finite colouring $\chi\colon \Shape{S}{\Sigma^*}\to\{0,1,\ldots,r-1\}$, there exists
	$f\in \Shape{\Sigma^*}{\Sigma^*}$ such that $\chi$ restricted to $\Shape{S}{f[\Sigma^*]}$ is constant.
\end{theorem}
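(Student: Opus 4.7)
The plan is to induct on $k = \max_{w\in S}\lvert w\rvert$, combining the single-level pigeonhole Lemma~\ref{lem:pigeonhole} with the inductive hypothesis for $S_{<k}$ via a fusion argument. By Observation~\ref{obs:nobar}, I may assume $S = \overline{S}$, so that Lemma~\ref{lem:pigeonhole} applies; the base case $k = 0$ is immediate since $\Shape{S}{\Sigma^*}$ is a singleton.

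For the inductive step, fix a coloring $\chi\colon \Shape{S}{\Sigma^*}\to r$ and, for each $g^0\in\Shape{S_{<k}}{\Sigma^*}$, denote by $G_{g^0}$ the set of $g\in\Shape{S}{\Sigma^*}$ extending $g^0$. Lemma~\ref{lem:pigeonhole} applied to each such $g^0$ produces some shape-preserving $f_{g^0}$ and a constant color that $\chi$ takes on $G_{g^0}\cap\Shape{S}{f_{g^0}[\Sigma^*]}$; fix one such color and denote it $c(g^0)\in\{0,\dots,r-1\}$. Now apply the inductive hypothesis to the $r$-coloring $c$ of $\Shape{S_{<k}}{\Sigma^*}$ to obtain $h\in\Shape{\Sigma^*}{\Sigma^*}$ and a color $c_0$ such that $c(g^0) = c_0$ for every $g^0\in\Shape{S_{<k}}{h[\Sigma^*]}$.

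It remains to refine $h$ to some $f\in\Shape{\Sigma^*}{h[\Sigma^*]}$ with $\chi$ actually constant on $\Shape{S}{f[\Sigma^*]}$, via a fusion construction. Enumerate the countable set $\Shape{S_{<k}}{h[\Sigma^*]}=\{g^0_n:n<\omega\}$ in an order with $\widetilde{g^0_n}(k-1)$ non-decreasing in $n$, and build $h=f_0,f_1,f_2,\dots$ inductively as follows: at stage $n+1$, if $g^0_n\in\Shape{S_{<k}}{f_n[\Sigma^*]}$, apply Lemma~\ref{lem:pigeonhole} inside $f_n[\Sigma^*]$ to obtain $f_{n+1}\in\Shape{\Sigma^*}{f_n[\Sigma^*]}$ that coincides with $f_n$ below the critical level $\widetilde{g^0_n}(k-1)+1$ and makes $\chi$ constant (necessarily $c_0$, since $c(g^0_n) = c_0$) on $G_{g^0_n}\cap\Shape{S}{f_{n+1}[\Sigma^*]}$; otherwise set $f_{n+1}=f_n$. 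Because each step is the identity below its critical level, the sequence $(f_n)$ stabilizes on every finite level and its limit $f$ is a well-defined shape-preserving map. Every $g^0\in\Shape{S_{<k}}{f[\Sigma^*]}$ appears as some $g^0_n$ (since $f[\Sigma^*]\subseteq h[\Sigma^*]$) and lies in $\Shape{S_{<k}}{f_n[\Sigma^*]}$ at stage $n+1$ (since $f[\Sigma^*]\subseteq f_n[\Sigma^*]$), hence is handled; thus $\chi$ is constant $c_0$ on $\Shape{S}{f[\Sigma^*]}$.

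The main technical obstacle is executing the fusion cleanly. On one hand, Lemma~\ref{lem:pigeonhole} is stated for $\Sigma^*$, so ``applying it inside $f_n[\Sigma^*]$'' requires transferring the lemma along shape-preserving maps, which follows from closure of shape-preserving maps under composition (Observation~\ref{obs:shape}~\ref{item:composition}) together with preservation of boring extensions (Observation~\ref{obs:shape}~\ref{item:boring}). On the other hand, one must check that later stages do not undo earlier monochromaticity; this holds because $f_{n+1}[\Sigma^*]\subseteq f_{m+1}[\Sigma^*]$ for every $m\le n$, so any constancy secured at stage $m+1$ on $G_{g^0_m}\cap\Shape{S}{f_{m+1}[\Sigma^*]}$ persists when intersected with the smaller set $\Shape{S}{f[\Sigma^*]}$.
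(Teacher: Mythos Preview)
Your overall architecture --- induct on $k$, use Lemma~\ref{lem:pigeonhole} at the top level, fuse --- is the right one, and essentially the same as the paper's. But there are two problems, one minor and one genuine.

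\textbf{Base case.} For $k=0$ you have $S=\{\emptyset\}$, and $\Shape{\{\emptyset\}}{\Sigma^*}$ is \emph{not} a singleton: for any $w\in\Sigma^*$ the map $\emptyset\mapsto w$ is shape-preserving, since a single word has no interesting levels below its own length and hence $\tau(\{w\})=\{\emptyset\}$. So $\Shape{\{\emptyset\}}{\Sigma^*}$ is in bijection with $\Sigma^*$, and you genuinely need Theorem~\ref{thm:CS} here. This is easy to fix.

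\textbf{The real gap.} Your parenthetical ``necessarily $c_0$, since $c(g^0_n)=c_0$'' does not follow. The value $c(g^0_n)$ was defined as the constant colour produced by one particular application of Lemma~\ref{lem:pigeonhole}, yielding some $f_{g^0_n}$ chosen before you even built $h$. In the fusion you apply Lemma~\ref{lem:pigeonhole} again, now inside $f_n[\Sigma^*]\subseteq h[\Sigma^*]$, obtaining a (possibly different) $f_{n+1}$ and a (possibly different) constant colour. There is no relationship between $f_{g^0_n}[\Sigma^*]$ and $f_{n+1}[\Sigma^*]$, so nothing forces the two constants to agree. Thus after the fusion you know that $\chi$ is constant on each fibre $G_{g^0}\cap\Shape{S}{f[\Sigma^*]}$, but you do not know that these constants are all equal to $c_0$, and your preliminary application of the inductive hypothesis to the colouring $c$ is wasted.

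The fix is simply to swap the order of the two steps: perform the fusion \emph{first}, obtaining $h\in\Shape{\Sigma^*}{\Sigma^*}$ such that $\chi(h\circ g)$ depends only on $g\!\restriction\!S_{<k}$; this gives a well-defined colouring $\chi'$ of $\Shape{S_{<k}}{\Sigma^*}$ (not merely of $\Shape{S_{<k}}{h[\Sigma^*]}$), to which the inductive hypothesis applies directly. This is exactly how the paper proceeds.
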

\begin{proof}
	By Observation~\ref{obs:nobar} we can assume, without loss of generality, that $S=\overline{S}$.
	We will use induction on $k=\max_{w\in S}|w|$.  For $k=0$ we can interpret $\chi$ as a colouring of $\Sigma^*$, apply Theorem~\ref{thm:CS} to obtain a monochromatic infinite-parameter word $W$, define $f(w)=W(w)$ for every $w\in\Sigma^*$, and observe that $f$ is shape-preserving.

	Now fix $S$ such that $k=\max_{w\in S}|w|>0$ and a finite colouring $\chi\colon \Shape{S}{\Sigma^*}\to\{0,1,\ldots,r-1\}$.
	We will make use of the following claim:
	\begin{claim}
		There exists $h\in \Shape{\Sigma^*}{\Sigma^*}$ and a colouring $\chi'\colon \Shape{S_{<k}}{\Sigma^*}\to \{0,1,\ldots,\allowbreak r-1\}$  such that for every finite $g\in \Shape{S}{\Sigma^*}$,  we have $\chi(h\circ g)=\chi'(g \restriction S_{<k})$.
	\end{claim}
	First we show that Theorem~\ref{thm:multCS} follows from the claim. Let $h$ and $\chi'$ be given by the claim. By the induction hypothesis, there exists $f'\in\Shape{\Sigma^*}{\Sigma^*}$ such that $\chi'$ is constant on $f'[\Sigma^*]$. It is easy to check that $f=h\circ f'$ is shape-preserving and that $\chi$ is constant when restricted to $f[\Sigma^*]$.

	\medskip
	It remains to prove the claim. We will obtain $h$ as the limit of the following sequence:
	Pick an enumeration $\Shape{S_{<k}}{\Sigma^*}=\{g^0, g^1,\ldots\}$ such that $0\leq i\leq j$ implies that $\widetilde{g}^i(k-1)\leq\widetilde{g}^j(k-1)$.
	We construct  a sequence of shape-preserving functions $f^0,f^1,\ldots\in \Shape{\Sigma^*}{\Sigma^*}$  such that for every $i>0$ the following is satisfied:
	\begin{enumerate}
		\item $f^i[\Sigma^*]\subseteq f^{i-1}[\Sigma^*]$ and $f^i(u)=f^{i-1}(u)$ for every $u\in \Sigma^*_{<\widetilde{g}^{i-1}(k-1)+1}$.
		\item There exists $c^{i-1}\in \{0,1,\ldots, r-1\}$ such that $\chi(f^i\circ g)=c^{i-1}$ for every $g\in \Shape{S}{\Sigma^*}$ extending $g^{i-1}$.
	\end{enumerate}

	Put $f^0$ to be the identity $\Sigma^*\to\Sigma^*$. Now assume that $f^{i-1}$ is already constructed.  Consider the colouring $\chi^i\colon \Shape{S}{\Sigma^*}\to\{0,1,\ldots,r-1\}$ defined by $\chi^i(g) = \chi(f^{i-1}\circ g)$. Obtain $h^i \in \nShape{\widetilde{g}^{i-1}(k-1)+1}{\Sigma^*}{\Sigma^*}$ by an application of Lemma~\ref{lem:pigeonhole} for the colouring $\chi^i$ and the function $f^{i-1}\circ g^{i-1}$ (as $g^0$ in the statement), and put $f^i=f^{i-1}\circ h^i$.

	\medskip

	Next we construct the limit shape-preserving $h$.
	For every $i\geq 1$ it holds that $f^i(u)=f^{i-1}(u)$ for all $u\in \Sigma^*$ with $|u|\leq \widetilde{g}^{i-1}(k-1)$.
	Because $\widetilde{g}^{i-1}(k-1)$ is an increasing function of $i$ and there is no upper bound on the length of words in $\Sigma^*$, it follows that $h(u) = \lim_{i\to \omega} f^i(u)$ is well-defined for every $u\in \Sigma^*$. Moreover, $h$ is shape-preserving, because the failure of shape-preservation would be witnessed on a finite set. We also put $\chi'(g^i)=c^i$ for every $i\in \omega$.
\end{proof}

Now we are finally ready to prove a big Ramsey result for $\str P$. In the proof of the next result, it is somewhat more natural to work with colourings of \emph{copies} rather than embeddings, where a copy is simply the image of an embedding. Given structures $\bA$ and $\bB$, we denote the set of copies of $\bA$ in $\bB$ by $\binom{\bB}{\bA}$. Note that if $\bX\in \binom{\bB}{\bA}$, then there are exactly $|\aut(\bA)|$-many embeddings $f\in \emb(\bA, \bB)$ with $\im(f) = \bX$. From this, it follows that if one instead defines the big Ramsey degree of $\bA$ in $\bB$ in terms of copies rather than embeddings, the numbers will differ by exactly a factor of $|\aut(\bA)|$ (see for instance Section 4 of \cite{zucker2016topological}). 

\begin{corollary}
\label{cor:big_ramsey}
	For every finite partial order $\str{Q}$, the big Ramsey degree of $\str{Q}$ in the generic partial order $\str{P}$ is at most $|T(\str{Q})|\cdot |\mathrm{Aut}(\str{Q})|$. 
\end{corollary}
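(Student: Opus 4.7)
Fix the embedding $\psi\colon\str{P}\to(\Sigma^*,\preceq)$ provided by Theorem~\ref{thm:posetemb} and identify $\str{P}$ with the poset-diary $\psi[\omega]$. For every embedding $g\colon\str{Q}\hookrightarrow\psi[\omega]$, Observation~\ref{obs:types} gives a sub-diary $S_g:=\tau(g[\str{Q}])\in T(\str{Q})$, and $g$ induces a poset isomorphism $b_g\colon\str{Q}\to S_g$; we call the pair $(S_g,b_g)$ the \emph{embedding type} of $g$. The number of distinct types is at most $|T(\str{Q})|\cdot|\mathrm{Aut}(\str{Q})|$, and an embedding of type $(S_i,b_i)$ is precisely a composition $h\circ b_i$ with $h\in\Shape{S_i}{\Sigma^*}$ satisfying $h[S_i]\subseteq\psi[\omega]$.

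Given a colouring $\chi\colon\binom{\str{P}}{\str{Q}}\to k$, enumerate the types $(S_1,b_1),\ldots,(S_N,b_N)$ and define $\chi_i\colon\Shape{S_i}{\Sigma^*}\to\{0,1,\ldots,k\}$ by $\chi_i(h):=\chi(h\circ b_i)$ when $h[S_i]\subseteq\psi[\omega]$ and $\chi_i(h):=k$ otherwise. We iteratively apply Theorem~\ref{thm:multCS}: setting $g_0:=\mathrm{id}_{\Sigma^*}$, for each $i=0,\ldots,N-1$ we apply the theorem to the colouring $h\mapsto \chi_{i+1}(g_i\circ h)$ on $\Shape{S_{i+1}}{\Sigma^*}$ to obtain $g'_{i+1}\in\Shape{\Sigma^*}{\Sigma^*}$ making this colouring constant on $\Shape{S_{i+1}}{g'_{i+1}[\Sigma^*]}$; we then set $g_{i+1}:=g_i\circ g'_{i+1}$, which is shape-preserving by Observation~\ref{obs:shape}~\ref{item:composition}. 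Since $g_{i+1}[\Sigma^*]\subseteq g_i[\Sigma^*]$, every previously monochromatised $\chi_j$ ($j\leq i$) remains so on the smaller set, and $f:=g_N$ has every $\chi_i$ constant on $\Shape{S_i}{f[\Sigma^*]}$.

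Finally, inside $f[\Sigma^*]$ we rerun the construction of Theorem~\ref{thm:posetemb}: because the boring extensions and the four kinds of diary-event extensions used there are preserved by shape-preserving maps (Propositions~\ref{prop:typeextend} and~\ref{prop:typeextend2}, Observation~\ref{obs:shape}~\ref{item:boring}), every level-by-level step of that proof remains executable, producing an embedding $\psi^f\colon\str{P}\to(\Sigma^*,\preceq)$ with $\psi^f[\omega]\subseteq f[\Sigma^*]$ a poset-diary. Every embedding $\str{Q}\hookrightarrow\psi^f[\omega]$ of type $(S_i,b_i)$ is $h\circ b_i$ for some $h\in\Shape{S_i}{f[\Sigma^*]}$, so its $\chi_i$-colour is the fixed constant depending only on $i$, yielding at most $N\leq|T(\str{Q})|\cdot|\mathrm{Aut}(\str{Q})|$ colours on $\binom{\psi^f[\omega]}{\str{Q}}$. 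The main obstacle is coordinating this last step: one must verify that Theorem~\ref{thm:posetemb}'s construction really transfers faithfully inside the subtree $f[\Sigma^*]$, and that the resulting copy $\psi^f[\omega]$---abstractly isomorphic to $\str{P}$---yields a genuine embedding $\str{P}\hookrightarrow\str{P}$ whose colour count is bounded by $N$; this is where careful bookkeeping within the big Ramsey structure framework of~\cite{zucker2017} is needed.
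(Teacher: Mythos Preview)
Your iterative application of Theorem~\ref{thm:multCS} is fine and matches the paper, but the final step has a genuine gap that your closing paragraph flags without resolving. You identify $\str{P}$ with $\psi[\omega]\subseteq\Sigma^*$, so $\chi$ is only defined on embeddings landing in $\psi[\omega]$. Your auxiliary colourings $\chi_i$ carry a dummy value $k$ on every $h\in\Shape{S_i}{\Sigma^*}$ with $h[S_i]\not\subseteq\psi[\omega]$. After passing to $f[\Sigma^*]$, there is no reason for $f[\Sigma^*]\cap\psi[\omega]$ to be large; indeed $\psi[\omega]$ is a $\sqsubseteq$-antichain of very specific words, while $f[\Sigma^*]$ consists of words of lengths in the range of $\widetilde f$, so the intersection is typically finite. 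Hence the constant value of $\chi_i$ on $\Shape{S_i}{f[\Sigma^*]}$ will in general be the dummy $k$, and this tells you nothing about $\chi$. Your proposed $\psi^f[\omega]\subseteq f[\Sigma^*]$ is a poset-diary isomorphic to $\str{P}$ as a poset, but it is \emph{not} a substructure of $\str{P}=\psi[\omega]$, so embeddings $\str{Q}\hookrightarrow\psi^f[\omega]$ are not even in the domain of $\chi$. No amount of bookkeeping in the sense of~\cite{zucker2017} repairs this: you are simply working in the wrong copy.

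The paper resolves this with one extra ingredient you are missing: an embedding $\eta\colon(\Sigma^*,\preceq)\to\str{P}$, which exists by universality of $\str{P}$. The colourings are defined \emph{through} $\eta$, namely $\chi_i(g)=\chi(\eta\circ g\circ\cdots)$, so they are genuine (no dummy value is ever needed). After the iteration yields $f_n$, the composite $\eta\circ f_n\circ\psi$ is an honest embedding $\str{P}\to\str{P}$, and every $h\in\binom{\str{P}}{\str{Q}}$ factoring through its image has colour determined by its sub-diary type. The point is that $(\Sigma^*,\preceq)$ is not itself isomorphic to $\str{P}$ (for instance the empty word is $\preceq$-incomparable to everything), so one must go back through $\eta$ to land in $\str{P}$; your ``rerun Theorem~\ref{thm:posetemb} inside $f[\Sigma^*]$'' produces a copy in $\Sigma^*$, not in $\str{P}$.
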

\begin{proof}
	Fix a finite partial order $\str{Q}$ and a colouring $\chi$ of $\binom{\bP}{\bQ}$.
	Choose an arbitrary enumeration $T(\str{Q})=\{S_0,S_1,\ldots,S_{n-1}\}$ and an arbitrary embedding $\eta\colon {(\Sigma^*,\preceq)}\to\str{P}$ (which exists since $\str{P}$ is universal).
	Observe that for every $i\in n$, $\chi$ and $\eta$ together induce a colouring $\chi_i$ of $\Shape{S_i}{\Sigma^*}$ by putting $\chi_i(g)=\chi(\im(\eta\circ g)).$ 
	By repeated applications of Theorem~\ref{thm:multCS} we construct a  sequence of functions $\mathrm{Id}=f_0,f_1,\ldots f_{n}\in \Shape{\Sigma^*}{\Sigma^*}$ such that for every $i\in n$ the following is satisfied:
	\begin{enumerate}
		\item $f_{i+1}[\Sigma^*]\subseteq f_{i}[\Sigma^*]$, and
		\item $\chi_i$ restricted to $\Shape{S_i}{f_{i+1}[\Sigma^*]}$ is constant.
	\end{enumerate}
	Let $\psi\colon \str{P}\to{(\Sigma^*,\preceq)}$ be obtained by Theorem~\ref{thm:posetemb}.
	Then $\theta:= \eta\circ f_n\circ \psi\in \emb(\bP, \bP)$, and given $\bX\in \binom{\bP}{\bQ}$, we have that $\chi(\theta[\bX])$ depends only on $\tau(f_n\circ \psi[\bX])$. Hence we have $|\chi[\binom{\theta[\bP]}{\bQ}]| \leq T(\bQ)$.
\end{proof}

\section{The lower bound}\label{sec:lowerBound}

Given a finite partial order $\str{Q}$, a \emph{labeled} poset-diary for $\str{Q}$ is a pair $(S, f)$, where $S\in T(\str{Q})$ and $f\colon \str{Q}\to (S, \prec)$ is an isomorphism. Let $T^{\rm{lab}}(\str{Q})$ denote the set of labeled poset-diaries coding $\str{A}$. Note that $|T^{\rm{lab}}(\str{Q})| = |T(\str{Q})|\cdot |\mathrm{Aut}(\str{Q})|$. Using the embedding $\psi\colon \str{P}\to (\Sigma^*, \preceq)$ constructed in Theorem~\ref{thm:posetemb}, we define a colouring $\chi_\str{Q}\colon \emb(\bQ, \bP)\to T^{\rm{lab}}(\str{Q})$ by setting 
$$\chi_\str{Q}(f)=(\tau(\psi\circ f[Q]), \tau_{\psi\circ f[Q]}\circ\psi\circ f).$$

Given any colouring $\gamma\colon \emb(\bQ, \bP)\to r$ and $h\in \emb(\bP, \bP)$, we obtain a new colouring $\gamma\cdot h\colon \emb(\bQ, \bP)\to r$ by setting $\gamma\cdot h(f) = \gamma(h\circ f)$. In this way, we obtain an action of the monoid $\emb(\bP, \bP)$ on the set of $r$-colourings of $\emb(\bQ, \bP)$ (see Remark~\ref{Rem:Dynamics} for more discussion). We call $\gamma$ a \emph{recurrent} colouring if for any $h\in \emb(\bP, \bP)$, there is $\phi\in \emb(\bP, \bP)$ with $\gamma\cdot (h\circ \phi) = \gamma$. 

The main theorem of this section, Theorem~\ref{thm:embthm}, gives lower bounds for the big Ramsey degrees of $\bP$. 

\begin{theorem}
	\label{thm:embthm}
	For every finite partial order $\str{Q}$, the colouring $\chi_\bQ$ is recurrent. In particular, for any $h\in \emb(\bP, \bP)$, we have
    $$\left\{\chi_\str{Q}(h\circ g): g\in \emb(\bQ, \bP)\right\}=T^{\rm{lab}}(\str{Q}).$$
	Hence the big Ramsey degree of $\bQ$ in $\bP$ is at least $|T^{\rm{lab}}(\bQ)| = |T(\bQ)|\cdot |\aut(\bQ)|$.
\end{theorem}
Combining Theorem~\ref{thm:embthm} with Corollary~\ref{cor:big_ramsey}, we see that the big Ramsey degree of $\bQ$ in $\bP$ is exactly $|T(\bQ)|\cdot |\aut(\bQ)|$, thus proving the first part of Theorem~\ref{thm:main}.

\begin{remark}
\label{Rem:Dynamics}
We briefly discuss the second part of Theorem~\ref{thm:main}, namely the existence of a big Ramsey structure and the connections to topological dynamics. All terminology pertaining to topological dynamics which is not defined here can be found in the introduction of \cite{zucker2017}. As we mentioned above, the monoid $\emb(\bP, \bP)$ acts on the right on the set $r^{\emb(\bQ, \bP)}$. It is natural to equip both $\emb(\bP, \bP)$ and $r^{\emb(\bQ, \bP)}$ with topologies as follows.
\begin{itemize}
    \item 
    We equip $\emb(\bP, \bP)$ with the topology of pointwise convergence, where $h_n\to h$ iff for every $x\in P$, we eventually have $h_n(x) = h(x)$. 
    \item 
    Viewing $r$ as a finite discrete space, we equip $r^{\emb(\bQ, \bP)}$ with the product topology. Thus this space is homeomorphic to Cantor space.
\end{itemize}
With these topologies, the action of $\emb(\bP, \bP)$ on $r^{\emb(\bQ, \bP)}$ is continuous. In particular, restricting our attention to the action of $\aut(\bP)\subseteq \emb(\bP, \bP)$, we have that $r^{\emb(\bQ, \bP)}$ is a (right) $\aut(\bP)$-flow, i.e.\ a continuous (right) action of the topological group $\aut(\bP)$ on a compact Hausdorff space. 

However, from the $\aut(\bP)$ action, we can entirely recover the $\emb(\bP, \bP)$ action as follows. Any topological group $G$ comes equipped with several interesting, compatible uniform structures, among them the \emph{left uniformity}. When $G$ is equipped with its left uniform structure, we can create two larger spaces which canonically embed $G$. One is the \emph{left completion} of $G$, which we denote by $\wh{G}$; the other is the \emph{Samuel compactification} of $G$ with respect to the left uniformity, which we denote by $\Sa(G)$. Up to canonical identifications, we have $G\subseteq \wh{G}\subseteq \sa(G)$. The Samuel compactification is sometimes known as the \emph{greatest $G$-ambit}; it is a $G$-flow, and whenever $X$ is a $G$-flow and $x_0\in X$, there is a (unique) $G$-map $\lambda_{x_0}\colon \sa(G)\to X$ satisfying $\lambda_{x_0}(1_G) = x_0$. This allows us to equip $\sa(G)$ with a semigroup structure; furthermore, this semigroup acts on any $G$-flow $X$ in a way which extends the $G$ action, where given $x\in X$ and $p\in\sa(G)$, we have $x{\cdot}p = \lambda_x(p)$. While the action $X\times \sa(G)\to X$ will not in general be continuous, its restriction to $X\times \wh{G}$ is always continuous. For $G = \aut(\bP)$, we have $\wh{G} = \emb(\bP, \bP)$.

A \emph{completion flow} for $G$ is a $G$-flow $X$ containing a \emph{completion point}, some $x\in X$ such that for any $\eta\in \widehat{G}$, the point $x\cdot\eta\in X$ has dense $G$-orbit, equivalently dense $\wh{G}$-orbit. One of the main theorems from \cite{zucker2017} is that if $G = \aut(\bK)$ for $\bK$ a \fr structure which admits a \emph{big Ramsey structure}, then $G$ admits a completion flow $X$ which is \emph{universal}, i.e.\ which admits a $G$-map onto any other completion flow $Y$. For $G$ of this form, we further have that the universal completion flow is both metrizable and unique up to isomorphism.

There are a few, equivalent ways of defining big Ramsey structures. One way is as follows. Suppose $\bK$ is a \fr structure where each finite $\bA\subseteq \bK$ has big Ramsey degree $r_\bA< \omega$. Then a big Ramsey structure for $\bK$ is a collection of colourings $\{\gamma_\bA: \bA\subseteq\bK \text{ finite}\}$ where:
\begin{enumerate}
    \item 
    For each $\bA\subseteq \bK$ finite, $\gamma_\bA\colon \emb(\bA, \bK)\to r_\bA$ witnesses that the big Ramsey degree of $\bA$ in $\bK$ is at least $r_\bA$. 
    \item 
    The collection $\{\gamma_\bA: \bA\subseteq \bK\text{ finite}\}$ is \emph{coherent}, meaning that whenever $\bA, \bB\subseteq \bK$ are finite and $f\in \emb(\bA, \bB)$, then if $x, y\in \emb(\bB, \bK)$ satisfy $\gamma_\bB(x) = \gamma_\bB(y)$, then also $\gamma_\bA(x\circ f) = \gamma_\bA(y\circ f)$. 
\end{enumerate}
By taking a product of all the various spaces of colourings, we can view $x:= (\gamma_\bA)_\bA$ as a point in this product. Then letting $X = \ol{x\cdot \aut(\bK)}$, one has that $X$ is the universal completion flow of $\aut(\bK)$; it is a completion flow because the point $x$ is a completion point. In particular, for the generic partial order $\bP$, the colorings $(\chi_\bQ)_\bQ$ form a coherent collection of colourings which all witness the exact lower bounds for big Ramsey degrees, and thus we obtain a big Ramsey structure for $\bP$. In fact, we don't need to consider every finite $\bQ\subseteq \bP$; it suffices to consider a representative of each isomorphism type of partial order on a vertex set of size at most $4$ (in a poset-diary, this is enough to encode the relative heights between splitting levels, new $\perp$ levels, and new $\prec$ levels). Thus we obtain a big Ramsey structure for $\str{P}$ in a finite relational language.

This concludes the discussion of Remark~\ref{Rem:Dynamics}.
\end{remark}

The rest of this section proves Theorem~\ref{thm:embthm} and concludes the proof of Theorem~\ref{thm:main}.
First we show, by a repeated application of Lemma~\ref{lem:pigeonhole}, that for every $f\in \emb((\Sigma^*,\preceq), (\Sigma^*,\preceq))$ (not necessarily shape-preserving), there exists 
$g\in \Shape{\Sigma^*}{\Sigma^*}$ such that $f$ preserves some features of the tree structure on $g[\Sigma^*]$.

\begin{lemma}
	\label{lem:canonical}
	For every $f\in \emb((\Sigma^*,\preceq), (\Sigma^*,\preceq))$, there exists $g\in \Shape{\Sigma^*}{\Sigma^*}$ and a sequence $(N_i)_{i\in\omega}$ satisfying:
	\begin{enumerate}
		\item $N_0=0$,
		\item for every $u\in \Sigma^*$ it holds that $N_{|u|}\leq |f(g(u))|< N_{|u|+1}$, and,
		\item for every $u,v\in \Sigma^*$ and $\ell$ such that $u|_\ell = v|_\ell$, it holds that $f(g(u))|_{N_\ell}=f(g(v))|_{N_\ell}$.
	\end{enumerate}
	See Figure~\ref{fig:canonical}.
\end{lemma}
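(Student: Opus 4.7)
The plan is to construct $g$ and the sequence $(N_\ell)_{\ell \in \omega}$ by induction on levels, using Lemma~\ref{lem:pigeonhole} at each step to homogenize initial segments of $f$-images. Starting from $f^0 = \mathrm{id}_{\Sigma^*}$, $N_0 = 0$, and $N_1 = |f(\emptyset)| + 1$, I would build a sequence $f^0, f^1, f^2, \ldots \in \Shape{\Sigma^*}{\Sigma^*}$ with $f^{i+1}[\Sigma^*] \subseteq f^i[\Sigma^*]$ such that $f^i$ and $f^{i-1}$ agree on $\Sigma^*_{\leq i-1}$ and such that, after stage $i$, the values $g(u) := f^i(u)$ (for $|u| \leq i$) together with $N_0 < N_1 < \cdots < N_{i+1}$ satisfy conditions (1)--(3) restricted to $|u| \leq i$. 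The desired map $g(u) := f^{|u|+1}(u)$ is then a well-defined limit which is shape-preserving since shape-preservation of a function is witnessed on any finite subset of its domain, exactly as in the diagonal construction in the proof of Theorem~\ref{thm:multCS}.

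The inductive passage from stage $i-1$ to stage $i$ uses Lemma~\ref{lem:pigeonhole} applied with $S = \Sigma^*_{\leq i}$ (which satisfies $S = \overline{S} = \tau(S)$ and consists of mutually compatible words) and with the restriction $g^0 = f^{i-1} \restriction \Sigma^*_{<i}$. By Observation~\ref{obs:towords}, every shape-preserving extension of $g^0$ to $S$ is coded by a word over the finite alphabet of boring extensions of $\Sigma^*_i$. I would then consider the finite coloring that sends each such extension $g'$ to the pair consisting of the ``length profile'' $\bigl(\min(|f(g'(u))|, N_i+1)\bigr)_{u \in \Sigma^*_i}$ and the ``prefix tuple'' $\bigl(f(g'(u))|_{N_i}\bigr)_{u \in \Sigma^*_i}$. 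Lemma~\ref{lem:pigeonhole} furnishes a shape-preserving refinement $h \in \nShape{K+1}{\Sigma^*}{\Sigma^*}$ on which the coloring is constant; I set $f^i = f^{i-1} \circ h$ and $N_{i+1} = \max_{u \in \Sigma^*_i} |f(f^i(u))| + 1$. Condition (2) at level $i$ then holds by the choice of $N_{i+1}$, while the constancy of the prefix tuple, together with the fact that every later refinement stays inside $h[\Sigma^*]$, forces $f(g(v))|_{N_i}$ at any future stage to depend only on $v|_i$, delivering condition (3) at level $i$.

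The principal obstacle I anticipate is ensuring that the length constraint in condition (2), $|f(f^i(u))| \geq N_i$ for $u \in \Sigma^*_i$, can always be met simultaneously with the prefix-homogenization; the pigeonhole lemma only fixes the coloring value, which a priori might pick out a small value of $|f(g'(u))|$. The strategy to overcome this is to precompose with a preliminary shape-preserving refinement that makes $|g'(u)|$, and hence $|f(g'(u))|$, uniformly large: since $f$ is injective on the infinite poset $(\Sigma^*, \preceq)$, its image contains words of arbitrarily large length, and choosing sufficiently long boring-extension words in Observation~\ref{obs:towords} promotes this to a simultaneous lower bound for all $u \in \Sigma^*_i$. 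With length-control secured, the length profile of the coloring above is forced to the ``$\geq N_i+1$'' value, the prefix homogenization reduces to routine pigeonholing, and the diagonal limit follows the blueprint of Theorem~\ref{thm:multCS}.
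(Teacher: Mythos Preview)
Your overall plan --- build $g$ as a limit of nested shape-preserving refinements, using Lemma~\ref{lem:pigeonhole} once per level --- is the same as the paper's. But there is a genuine gap in the step where you extract condition~(3) from the pigeonhole output, and it comes from applying the lemma to the whole level at once rather than one branch at a time.

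You apply Lemma~\ref{lem:pigeonhole} with $S=\Sigma^*_{\leq i}$ and $g^0=f^{i-1}\restriction\Sigma^*_{<i}$, obtain $h\in\nShape{K+1}{\Sigma^*}{\Sigma^*}$, and set $f^i=f^{i-1}\circ h$. The lemma tells you the prefix tuple is constant on $M_i:=\Shape{S}{h[\Sigma^*]}\cap G$. To conclude that $f(g(v))|_{N_i}$ depends only on $v|_i$ for arbitrary $v$ with $|v|\geq i$, you would need to exhibit some $g'\in M_i$ with $g'(v|_i)=g(v)$; but $g(v)=f^{i-1}\bigl(h(h_{i+1}(\cdots h_{|v|}(v)))\bigr)$ lies in $f^{i-1}[\Sigma^*]$, whereas membership in $M_i$ forces $g'(v|_i)\in h[\Sigma^*]$, and these two sets need not agree above level $K$. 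Your assertion that ``every later refinement stays inside $h[\Sigma^*]$'' is false with the composition order $f^i=f^{i-1}\circ h$: later images are contained in $f^{i-1}[\Sigma^*]$, not in $h[\Sigma^*]$. Even reversing the order to $h\circ f^{i-1}$ does not help, since then you lose the ability to realise $g(v)$ as $g'(v|_i)$ for a shape-preserving $g'$ defined simultaneously on \emph{all} of $\Sigma^*_i$. The paper avoids this entirely by treating each $w^m\in\Sigma^*_i$ separately: it applies the pigeonhole lemma to the single-branch tree $S^m_i=\overline{\{w^m\}}$ with the colouring $h\mapsto f(g^m_i(h(w^m)))|_{N_i}$, iterating over $m$. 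With only one leaf in $S^m_i$ there is no simultaneous extension to arrange, and $g(v)$ can be written directly as $g^m_i$ applied to a word in $h^m_i[\Sigma^*]$.

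A secondary point: your ``principal obstacle'' is not the obstacle you think, and your proposed fix is incorrect. Making $|g'(u)|$ large does not force $|f(g'(u))|$ to be large, since $f$ is only a $\preceq$-embedding and carries no length information. The correct argument --- which the paper uses and which works equally well in your setup --- is that the monochromatic set yields infinitely many distinct values of $g'(w)$ for each fixed $w\in\Sigma^*_i$; injectivity of $f$ then gives infinitely many distinct $f$-images, so the common prefix recorded by the colouring must have full length $N_i$. No preliminary refinement is needed.
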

	\begin{figure}[h]
		\includegraphics{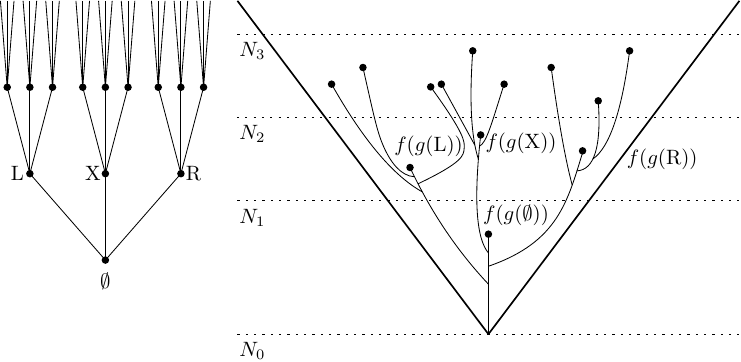}
		\caption{Function $f\circ g$.}
		\label{fig:canonical}
	\end{figure}
\begin{proof}
Fix $f\in \emb((\Sigma^*,\preceq), (\Sigma^*,\preceq))$.
We define a sequence of shape-preserving functions $(g_i)_{i\in\omega}$ and a sequence $(N_i)_{i\in \omega}$ of integers satisfying,
 for every $i>0$, the following three conditions:
\begin{enumerate}[label=(\Alph*)]
	\item\label{g1}$g_i\in \Shape{\Sigma^*}{\Sigma^*}$ and $g_{i-1}\restriction \Sigma^*_{{<}i}=g_i\restriction \Sigma^*_{{<}i}$. 
	\item\label{g2} For every $u\in \Sigma^*_{{<}i}$ it holds that $N_{|u|}\leq f(g_i(u))< N_{|u|+1}$.
	\item\label{g3} For every $u,v\in \Sigma^*$ and $\ell \leq \min(i, |u|,|v|)$ with $u|_\ell=v|_\ell$, we have $f(g_i(u))|_ {N_\ell}=f(g_i(v))|_ {N_\ell}$.
\end{enumerate}

Put $g_0$ to be identity and $N_0=0$. Proceeding by induction, assume that $g_{i-1}$ and $N_{i-1}$ have been constructed for some $i > 0$.  Put $$N_i=\max\left\{|f(g_{i-1}(u))|: u\in \Sigma^*_{i-1}\right\}+1.$$

	Enumerate $\Sigma^*_i=\{w^0,w^1,\ldots,w^{m-1}\}$.  By induction we will construct a sequence of functions $g_{i-1}=g^0_i, g^1_i,\ldots, g^m_i\in\Shape{\Sigma^*}{\Sigma^*}$. Assume that $g^j_i$ is constructed.
	Put $S^j_i=\overline{\{w^j\}}$ and define a colouring $\chi^j_i$ of $\nShape{i}{S^j_i}{\Sigma^*}$ by putting $\chi^j_i(h)=g^j_i(h(w^j))|_{N_i}$. Apply Lemma~\ref{lem:pigeonhole}
	on $\chi^j_i$ and obtain $h^j_i$. Put $g^{j+1}_i=h^j_i\circ g^j_i$.
	Finally, put $g_i=g^m_i$.

	To see that $g_i$ satisfies~\ref{g1}, note that all $h_i^j$'s are shape-preserving functions and that they are the identity when restricted to $\Sigma^*_{<i}$.
	Property~\ref{g2} follows directly from the choice of $N_i$.
	It remains to verify that~\ref{g3} is satisfied. By~\ref{g1} it is enough to verify this for $\ell = i$. Let $u,v\in \Sigma^*$ be such that $u|_i=v|_i$ and let $c^m_i$ be the constant value of $\chi^m_i$ for $m$ satisfying $w^m=u|_i=v|_i$ on $h_i^m$.
	Notice that $|c^m_i|=N_i$ because there are infinitely many images of words extending $w^m$.
	It follows that $g_i(u)|_{N_i}=g_i(v)|_{N_i}=c^m_i$.

It remains to put $g$ to be the limit of sequence $(g_i)_{i\in \omega}$.
\end{proof}

We denote by $d\colon \Sigma^*\to\Sigma^*$ the function that repeats every letter 3 times.  That is, for every $u\in \Sigma^*$ we put $d(u)=u'$ where $|u'|=3|u|$ and for every $\ell<|u|$ it holds that $u'_{3\ell}=u'_{3\ell+1}=u'_{3\ell+2}=u_\ell$.
Note that $d$ is a shape-preserving function.
\begin{lemma}
	\label{lem:ambpreserving}
	Let $f\in \emb((\Sigma^*,\preceq), (\Sigma^*,\preceq))$. Let $g\in \Shape{\Sigma^*}{\Sigma^*}$ and sequence $(N_i)_{i\in \omega}$ be given by Lemma~\ref{lem:canonical}.
	Put $f'=f\circ g\circ d$. 
	Then for every poset-diary $S$ and every $\ell<\sup_{u\in S}|u|$, we have that $\AmbStr{\overline{S}_\ell}\cong \AmbStr{\overline{f'[S]}_{N_{3\ell}}}$.
\end{lemma}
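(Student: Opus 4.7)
The plan is to prove the lemma by induction on $\ell$, establishing that the map $\tau_\ell\colon \overline{S}_\ell \to \overline{f'[S]}_{N_{3\ell}}$ defined by $\tau_\ell(u) := f(g(d(u')))|_{N_{3\ell}}$ for any $u' \in S$ extending $u$ is an isomorphism of the ambient structures. Well-definedness is immediate from Lemma~\ref{lem:canonical}(3): different extensions $u', u''$ of $u$ satisfy $d(u')|_{3\ell} = d(u) = d(u'')|_{3\ell}$, forcing $f(g(d(u')))|_{N_{3\ell}} = f(g(d(u'')))|_{N_{3\ell}}$. Surjectivity follows from Lemma~\ref{lem:canonical}(2): every $s \in S$ with $|s| \geq \ell$ satisfies $|f'(s)| \geq N_{3\ell}$, so any $w \in \overline{f'[S]}_{N_{3\ell}}$ can be written as $w = \tau_\ell(s|_\ell)$ for some such $s$. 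The base case $\ell = 0$ is trivial, as both sides are singletons.

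For the inductive step I invoke Proposition~\ref{prop:levelstr}: the transition from $\overline{S}_\ell$ to $\overline{S}_{\ell+1}$ realizes one of four structural events (leaf, splitting, new $\perp$, new $\prec$). I will verify case-by-case that the image side, moving from level $N_{3\ell}$ to $N_{3\ell+3}$, realizes the matching event on the ambient structure and that $\tau_{\ell+1}$ remains an isomorphism. The leaf case is immediate: a leaf $w \in S$ with $|w|=\ell$ satisfies $|f'(w)| < N_{3\ell+1} \leq N_{3\ell+3}$ by Lemma~\ref{lem:canonical}(2), so $\tau_\ell(w)$ admits no successor at the image level. The remaining three cases proceed by exhibiting the target relation (new splitting, new $\perp$, or new $\prec$) as a $\preceq$-relation or incompatibility on strategically chosen extensions in $\Sigma^*$, then transporting through the composition $f \circ g \circ d$, which is $\preceq$-preserving.

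The main obstacle is controlling the position of the image $\preceq$-witness so that it falls below the truncation level $N_{3\ell+3}$. For instance, in the ``new $\prec$'' case at a pair $(v,w) \in \overline{S}_\ell$ (with $v \eltlt w$ and $v,w$ unrelated), the extension pair $v\cont \L \prec w\cont \R$ in $\Sigma^*_{\ell+1}$ (witness $\ell$) yields $d(v\cont \L) \prec d(w\cont \R)$ at witness $3\ell$. By analyzing the chain of intermediate prefix pairs $d(v)\cdot \L \prec d(w)\cdot \R$, $d(v)\cdot\L\L \prec d(w)\cdot\R\R$, and $d(v)\cdot\L\L\L \prec d(w)\cdot\R\R\R$, one iteratively applies Lemma~\ref{lem:canonical}(3) at the three intermediate levels $N_{3\ell+1}, N_{3\ell+2}, N_{3\ell+3}$; combined with the inductive hypothesis (which forces $\tau_\ell(v)$ and $\tau_\ell(w)$ to be unrelated in the image at level $N_{3\ell}$), this localizes the effective $\preceq$-witness below $N_{3\ell+3}$, so the truncation preserves the new $\prec$. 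The factor of three in the tripling is essential here, matching the three letters of $\Sigma$ that can participate in any single poset-diary event. Reflection of the ambient relations (ruling out spurious ones in the image) follows from the injectivity and reflection properties of each component: $f$ as a $\preceq$-embedding, $g$ as shape-preserving (hence an embedding of all of $\AmbStr{\cdot}$), and $d$ as trivially faithful.
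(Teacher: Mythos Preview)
Your inductive setup, the definition of $\tau_\ell$, and the well-definedness/surjectivity arguments are correct and match the paper. The idea of transporting a new $\prec$ through auxiliary words (e.g.\ $d(v)\cont\L \prec d(w)\cont\R$) and using Lemma~\ref{lem:canonical}(2) to bound the witness level is also the right mechanism, and is essentially how the paper handles the forward direction for~$\prec$.

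The genuine gap is your last paragraph. You claim that reflection of the ambient relations follows from ``reflection properties of each component,'' but this is false for $\eltleq$: the map $f$ is only assumed to be a $\preceq$-embedding and need not respect $\eltleq$ at all. Moreover, even for $\prec$, the objects $\tau_{\ell+1}(u), \tau_{\ell+1}(v)$ are \emph{truncations} at level $N_{3\ell+3}$, not full images $f'(u'), f'(v')$; so ``$\tau_{\ell+1}(u)\prec \tau_{\ell+1}(v)$'' does not immediately feed into componentwise reflection. A concrete failure mode you have not ruled out: an old pair $u,v\in \overline{S}_{\ell+1}$ that is unrelated (i.e.\ $u\eltlt v$, $u\not\prec v$) could in principle have $\tau_{\ell+1}(u)\prec \tau_{\ell+1}(v)$ or $\tau_{\ell+1}(u)\perp \tau_{\ell+1}(v)$, since extending words can create new $\prec$ or $\perp$ relations. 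Your event-by-event bookkeeping only checks the ``new'' pair and says nothing about this. The paper closes these gaps by verifying preservation \emph{and} reflection of each of $\lexleq$, $\prec$, $\eltleq$ for \emph{all} pairs at level $\ell+1$, again via auxiliary witness words: for instance, reflection of $\prec$ uses that $u\not\prec v$ implies $u\cont\L\X\not\prec v\cont\X\L$, hence $f'(u\cont\L\X)\not\prec f'(v\cont\X\L)$, which together with $\mu(u)\sqsubseteq f'(u\cont\L\X)$, $\mu(v)\sqsubseteq f'(v\cont\X\L)$ forbids $\mu(u)\prec\mu(v)$; preservation of $\eltleq$ similarly uses $f'(u\cont\L)\prec f'(v\cont\R)$. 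You need analogous witness constructions for both directions of each relation, not just the forward direction of the single new event.
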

\begin{proof}
	We proceed by induction on $\ell$.  Since $\lvert \overline{S}_0\rvert = 1$ we have: $$\AmbStr{\overline{S}_0}=\AmbStr{\overline{f'[S]}_{N_0}}.$$

	Now assume that $\AmbStr{\overline{S}_\ell}$ is isomorphic to $\AmbStr{\overline{f'[S]}_{N_{3\ell}}}$.  We define the function $\mu$ 
	on $\ol{S}_{\ell+1}$ via $$\mu(u)=f'(u)|_{N_{3\ell+3}}.$$
	We claim that $\mu\colon \AmbStr{\overline{S}_{\ell+1}}\to \AmbStr{\overline{f'[S]}_{N_{3\ell+3}}}$ is an isomorphism:

	\begin{enumerate}
		\item $\mu[\overline{S}_{\ell+1}]\subseteq \overline{f'[S]}_{N_{3\ell+3}}$: Recall that $f'=f\circ g\circ d$. For every $u\in \overline{S}_{\ell+1}$, we have $|d(u)|=3\ell+3$. Let $v\in S$ be a successor of $u$. Notice that $d(v)$ is a successor of $d(u)$.
			Now because $g$ is given by Lemma~\ref{lem:canonical}, we know that both $f'(v)$ and $f'(u)$ are successors of $f'(u)|_{N_{3\ell+3}}$.
		\item For every $u\in S$ with $|u|\geq \ell+1$, we have that $\mu(u|_{\ell+1})\sqsubseteq f'(u)$: This follows directly from the fact that $g$ is constructed using Lemma~\ref{lem:canonical}.
		\item $\mu[\overline{S}_{\ell+1}]\supseteq \overline{f'[S]}_{N_{3\ell+3}}$:  Let $v\in \overline{f'[S]}_{N_{3\ell+3}}$ and 
			choose $u\in S$ such that $v\sqsubseteq f'(u)$.
			It follows from Lemma~\ref{lem:canonical} that $\lvert u\rvert \geq \ell+1$ and $\mu(u|_{\ell+1})=v$.
		\item $\mu$ is injective: 
			Assume that there are $u\lexlt v$ in $\overline{S}_{\ell+1}$ such that $\mu(u)=\mu(v)$.  The induction hypothesis then implies that level $\ell$ is splitting
			and that $u|_\ell = v|_\ell := w$ is the splitting node.  It follows that $u=w\cont \X$ and $v=w\cont \R$.
			Put $w'=d(w)\cont \L$, and observe that $d(u)\perp w'$ and $w'\prec d(v)$.  Since $N_{3\ell}\leq |f\circ g(w')|<N_{3\ell+1}$ and $f\circ g$ is an embedding, we know that
			there is a level $\ell'$ satisfying $N_{3\ell}\leq \ell'<N_{3\ell+1}$ satisfying $f'(u)_{\ell'}=\X$ and $f'(v)_{\ell'}=\R$, which gives a contradiction with $\mu(u)=\mu(v)$, as $\mu(u)\sqsubseteq f'(u)$ and $\mu(v)\sqsubseteq f'(v)$.
		\item $u\lexleq v\implies \mu(u)\lexleq \mu(v)$:
			By the induction hypothesis we only need to consider the case where $u|_\ell=v|_\ell := w$. We must have $u=w\cont \X$ and $v=w\cont \R$.
			Let $u'=w\cont \X\L$ and $v'=w\cont \R\R$.  Since $u'\prec v'$ and thus also $f'(u')\prec f'(v')$, and since $\mu(u)\sqsubseteq f(u')$ and $\mu(v)\sqsubseteq f(v')$, we have $\mu(u)\lexleq \mu(v)$.
            \item $\mu(u)\lexleq \mu(v)\implies u\lexleq v$: This follows from the previous point and the fact that $\lexleq$ is a linear order.
		\item $u\prec v\implies \mu(u)\prec \mu(v)$:  Let $\ell'$ be the minimal level such that $u_{\ell'}=\L$ and $v_{\ell'}=\R$.
			Put $w=d(u|_{\ell'})\cont \L\R$. Observe that $d(u)\prec w\prec d(v)$, that is, $w$ is a \emph{witness} of the fact that $d(u)\prec d(v)$. Observe also that $f'(u)=f(g(d(u)))\preceq f(g(w))\preceq f(g(d(v)))=f'(v)$. Because $\ell'\leq \ell$ we have $|w|\leq 3\ell+2$ and thus $|f(g(w))|< N_{3\ell+3}$. It follows that $\mu(u)\prec \mu(v)$.
		\item $\mu(u)\prec \mu(v)\implies u\prec v$:  Assume that $\mu(u)\prec \mu(v)$ and $u\not\prec v$. Because $\mu(u)\lexlt \mu(v)$, we also have $u\lexlt v$. Consequently, $u\cont \L\X\mathbin{\not \prec} v\cont \X\L$, and so $f'(u\cont \L\X)\not\prec f'(v\cont \X\L)$.  This is a contradiction with $\mu(u)\sqsubseteq f'(u\cont \L\X)$  and $\mu(v)\sqsubseteq f'(v\cont \X\L)$.

		\item $u\eltleq v\implies \mu(u)\eltleq \mu(v)$: Since $u'\prec v'\implies u'\eltlt v'$, we only need to consider the case where $u\eltlt v$ and $u\mathbin{\not \prec} v$, and so $f'(u)\not\prec f'(v)$. We have $f'(u\cont \L)\prec f'(v\cont \R)$, and because $\mu(u)\sqsubseteq f'(u)\sqsubseteq f'(u\cont \L)$ 
			and $\mu(v)\sqsubseteq f'(v)\sqsubseteq f'(v\cont \R)$, it follows that $\mu(u)\eltleq \mu(v)$.
		\item $\mu(u)\eltleq \mu(v)\implies u\eltleq v$: If $u\mathbin{\not \eltleq} v$ then there exists a level $\ell'<\ell+1$ such that $v_{\ell'}\llt u_{\ell'}$. Similarly as in the previous cases, we can produce a witness of this fact and contradict that $\mu(u)\eltleq \mu(v)$. \qedhere
	\end{enumerate}
\end{proof}

\begin{proof}[Proof of Theorem~\ref{thm:embthm}]
	Fix $f\in \emb(\bP, \bP)$. Let $\psi\colon \str{P}\to{(\Sigma^*,\preceq)}$
 be obtained by Theorem~\ref{thm:posetemb}.
	Let $\eta\colon {(\Sigma^*,\preceq)}\to\str{P}$ be an embedding (which exists since $\str{P}$ is universal).
	Now $\psi\circ f\circ\eta$ is an embedding ${(\Sigma^*,\preceq)}\to{(\Sigma^*,\preceq)}$.
	Let $g\colon {(\Sigma^*,\preceq)}\to{(\Sigma^*,\preceq)}$ and $(N_i)_{i\in \omega}$ be obtained by the application of Lemma~\ref{lem:canonical} on $\psi\circ f\circ\eta$.
	Put $f'=\psi\circ f\circ\eta\circ g\circ d$.  We claim that for every poset-diary $S$, we have that $\tau(f'[S])=S$. 
	By Lemma~\ref{lem:ambpreserving}, we know that for every $\ell<\sup_{u\in S}|u|$, we have that $\AmbStr{\overline{S}_\ell}\cong \AmbStr{\overline{f'[S]}_{N_{3\ell}}}$.
	By Proposition~\ref{prop:levelstr}, for every $0<\ell<\sup_{u\in S}|u|$, there is exactly one difference between  $\AmbStr{\overline{S}_{\ell-1}}$ and  $\AmbStr{\overline{S}_{\ell}}$.
	Consequently there is only one interesting level $\ell'$ of $\overline{f'[S]}$ between $N_{3\ell}$ and $N_{3\ell+3}$, and $\AmbStr{\overline{f'[S]}_{N_{\ell'}}}\cong \AmbStr{\overline{S}_\ell}$  while  $\AmbStr{\overline{f'[S]}_{N_{\ell'+1}}}\cong \AmbStr{\overline{S}_{\ell+1}}$. We further note that by the construction of $g$, the map $\tau_{f'[S]}\circ f'$ must be the identity on $S$. We can thus put $\phi= \eta\circ g\circ d$.
\end{proof}

\begin{proof}[Proof of Theorem~\ref{thm:main}]
Given a finite poset $\str{A}$, the fact that the big Ramsey degree of $\str{A}$ is exactly $|T^{\rm{lab}}(\str{A})| = |T(\str{A})|\cdot |\mathrm{Aut}(\str{A})|$ follows from Corollary~\ref{cor:big_ramsey} and Theorem~\ref{thm:embthm}. To conclude that $\str{P}$ admits a big Ramsey structure, we observe that the colourings $\chi_{\str{A}}$ as $\str{A}$ ranges over all finite posets satisfy the hypotheses of Theorem~7.1 from \cite{zucker2017}. Theorem~1.6 from \cite{zucker2017} then shows that $\mathrm{Aut}(\str{P})$ admits a metrizable universal completion flow.
\end{proof}

\section{Concluding remarks}
\subsection{Comparison to big Ramsey degrees of the order of rationals}
It is natural to ask how the characterisation of big Ramsey degrees of partial orders compares to
that of linear orders.
The big Ramsey degrees of the linear order correspond to \emph{Devlin embedding types} which, in our setting, can be defined as follows:
\begin{definition}[Devlin embedding type~\cite{devlin1979}, see also \cite{todorcevic2010introduction}, Definition~6.9]
	\label{def:devlin}
	A set $S\subseteq \{\L,\R\}^*$ is called a \emph{Devlin embedding type}, if no member of $S$ extends any other and precisely one of the following two conditions is satisfied for every level $\ell$ with $0\leq \ell< \sup_{w\in S}|w|$:
	\begin{enumerate}
		\item \textbf{Leaf:}  There is $w\in \overline{S}_\ell$ and
		      \begin{align*}
			      \qquad \overline{S}_{\ell+1} & =(\overline{S}_\ell\setminus \{w\} )\cont \L.
		      \end{align*}
		\item \textbf{Splitting:}  There is $w\in \overline{S}_\ell$ such that
		      \begin{align*}
			      \begin{split}
				      \qquad \overline{S}_{\ell+1}&=\{z\in \overline{S}_\ell:z\lexlt w\}\cont \L\\
				      &\qquad \cup\{w\cont \L,w\cont\R\}\\
				      &\qquad \cup \{z\in \overline{S}_\ell:w\lexlt z\}\cont \L.
			      \end{split}
		      \end{align*}
	\end{enumerate}
	\begin{figure}
		\includegraphics{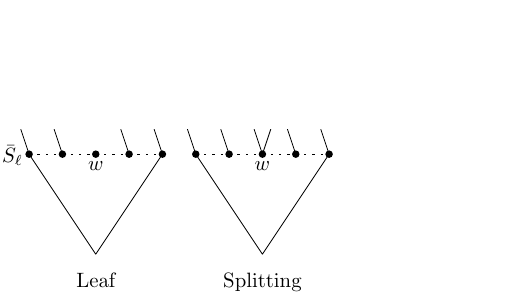}
		\caption{Possible levels in Devlin embedding types.}
		\label{fig:levels2}
	\end{figure}
	See also Figure~\ref{fig:levels2}.
	\begin{remark}
	It is interesting to note the differences between Definition~\ref{def:posetdiary} and Definition~\ref{def:devlin}. Since in linear orders there are
	no incomparable pairs, we only need a two-letter alphabet (there is no need for $X$). Moreover, there are only two interesting events: splitting and
		leaf.  Similarly to Remark~\ref{rem:approximation}, levels of Devlin types can again be seen as approximations of the structure (this time a linear order) represented by the Devlin type.
	However, these approximations are quite easy: they are again linear orders given by the lexicographic ordering where a vertex of the approximation
	corresponds to a linear interval of the final order. We thus do not obtain a new kind of level-structures.
	\end{remark}

	When $S$ is a Devlin embedding type, we call $\overline{S}$ a \emph{Devlin tree}. Given $n\in \omega$, we let $T'(n)$ be the set of all Devlin embedding types of size $n$.
\end{definition}

\begin{theorem}[Devlin~\cite{devlin1979}, see also \cite{todorcevic2010introduction}]
\label{thm:devlin}
	For every $n\in \omega$, the big Ramsey degree of $(n,\leq)$ in the order of rationals equals $|T'(n)|$.
\end{theorem}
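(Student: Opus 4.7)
The plan is to recover Devlin's classical theorem by specialising the machinery developed in the preceding sections to the much simpler setting of the rationals. The key observation is that for any enumeration of $(\mathbb{Q},\leq)$, each $1$-type over $\{0,\ldots,n-1\}$ is uniquely determined by a word in $\{\L,\R\}^n$ (the letter $\X$ never occurs, since $\mathbb{Q}$ is linearly ordered), and conversely every such word is realised. Consequently the tree of $1$-types is the full binary tree $\{\L,\R\}^*$, the compatibility condition of Definition~\ref{defn:comp} is automatic, and the relations $\preceq$ and $\eltleq$ both collapse onto $\lexleq$. The relation $\perp$ never holds and the ``new $\perp$'' and ``new $\prec$'' cases of Definition~\ref{def:posetdiary} are vacuous, so poset-diaries whose underlying words use only $\L$ and $\R$ are precisely the Devlin embedding types of the stated definition.

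With this identification in hand, I would follow the outline of Sections~4--7. For the upper bound: define interesting levels and shape-preserving functions for subsets of $\{\L,\R\}^*$ (interesting levels are now only leaves and splittings, and the boring-extension lemmas trivialise), and prove the analogues of Lemma~\ref{lem:pigeonhole} and Theorem~\ref{thm:multCS} by iterated application of Theorem~\ref{thm:CS}. The analogue of Theorem~\ref{thm:posetemb} is easier here: if $\mathbb{Q}=(\omega,\leq_\mathbb{Q})$, then mapping the $j$-th vertex to the word $w\in\{\L,\R\}^{j+1}$ with $w_j=\R$ and $w_i=\L$ iff $j\leq_\mathbb{Q} i$ yields a Devlin embedding type directly, without needing the level-by-level correction of the proof of Theorem~\ref{thm:posetemb}. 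Combining these ingredients exactly as in the proof of Corollary~\ref{cor:big_ramsey} yields the upper bound $|T'(n)|$.

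For the lower bound I would reproduce the strategy of Section~\ref{sec:lowerBound}: define the coloring $\chi_n$ sending an embedding of $(n,\leq)$ into the fixed coded copy of $\mathbb{Q}$ to its Devlin embedding type, and show this coloring is recurrent by specialising Lemma~\ref{lem:canonical} and Lemma~\ref{lem:ambpreserving}. In the linearly ordered setting the only interesting phenomenon at each level is splitting, so the doubling map $d$ introduced in Lemma~\ref{lem:ambpreserving}, which served to exhibit witnesses of $\preceq$ and $\perp$ simultaneously, is replaced by a single-letter lookahead that only needs to certify splittings. The analogue of Theorem~\ref{thm:embthm} then implies that distinct Devlin types cannot be identified by any embedding $\mathbb{Q}\to\mathbb{Q}$, which matches the upper bound.

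The main obstacle is not any single step but the bookkeeping required to verify that the reduction to $\{\L,\R\}^*$ is clean; in particular, one must check that dropping the $\perp$ and $\prec$ cases from Definition~\ref{def:posetdiary} does not disturb the inductive constructions in the proofs of Lemmas~\ref{lem:canonical} and~\ref{lem:ambpreserving}. This is routine: each of those inductions proceeds level by level, and in the linearly ordered setting the only case that survives is the splitting case, which is handled identically. Since the classical Milliken-based proof of Devlin's theorem is by now standard (see~\cite{todorcevic2010introduction}), the contribution of this proposal is merely to observe that the more elaborate framework of the present paper restricts correctly, rather than to give a genuinely new argument.
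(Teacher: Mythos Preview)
The paper does not prove Theorem~\ref{thm:devlin}; it is quoted as a classical result of Devlin, with references to~\cite{devlin1979} and~\cite{todorcevic2010introduction}, solely to place the paper's characterisation in context. There is no ``paper's own proof'' to compare against.

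Your plan to recover Devlin's theorem by specialising the poset machinery is a reasonable exercise in spirit, but several of the technical claims are incorrect as stated. First, on $\{\L,\R\}^*$ the relation $\perp$ does occur (for instance $\L\R\perp\R\L$), and $\eltleq$ does not collapse to $\lexleq$; what is actually true, and what you presumably intend, is that on each fixed level $\{\L,\R\}^\ell$ the relation $\prec$ coincides with $\lexlt$, so that the level structure is already a linear order and no ``new~$\prec$'' step is ever needed. Second, poset-diaries as written in Definition~\ref{def:posetdiary} use the letter $\X$ in every case (leaf extends by $\X$, splitting produces $w\cont\X$ and $w\cont\R$), so no poset-diary is a subset of $\{\L,\R\}^*$; Devlin embedding types are an \emph{analogous} construction with $\L$ playing the role of $\X$, not a literal restriction. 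Thus the passage from Definition~\ref{def:posetdiary} to Devlin types is a translation, not a specialisation, and you would need to redevelop the analogues of Sections~5--8 for the two-letter alphabet rather than merely ``drop cases''. Finally, the standard route to Theorem~\ref{thm:devlin} in~\cite{todorcevic2010introduction} goes through Milliken's tree theorem rather than the Carlson--Simpson theorem, so even after the translation your argument would be a genuinely different proof of Devlin's result rather than the classical one the paper is citing.
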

Any infinite Devlin tree whose leaves code the rational order is a big Ramsey structure for the rationals~\cite{zucker2017}.

Comparing Devlin trees to poset-diaries is thus relatively natural.  In a Devlin tree, only two types of events happen: splitting and leaf.
In a poset-diary, the splitting event is different.  If $w$ splits on its level then $w\cont \X$ and $w\cont L$ are incomparable in $\preceq$.  This adds a need
for additional two events: new $\prec$ and new $\perp$, deciding the poset structure between the successors of $w$.
\subsection{The triangle-free graph}
We outline how the techniques introduced in this paper can yield a particularly compact characterisation of the big
Ramsey degrees of the generic triangle-free graph. This is a special case of the main result of~\cite{Balko2021exact} (see Example 6.2.10).
However, we now give a more compact description which is similar to Definition~\ref{def:posetdiary}.

We fix an alphabet $\Sigma=\{0,1\}$ and denote by $\Sigma^*$ the set of all
finite words in the alphabet $\Sigma$, 
and by $|w|$ the length of the word $w$ (whose letters are indexed by
natural numbers starting at $0$).  We will denote the empty word by $\emptyset$.

We consider the following triangle-free graph $\str{G}^\triangle$ introduced
in~\cite{Hubicka2020CS}.

\begin{definition}[Graph $\str{G}^\triangle$]~
	\begin{enumerate}
 		\item The vertex set $G$ of $\str{G}^\triangle$ is $\Sigma^*$.
 		\item Given $u,v\in G$ satisfying $|u|<|v|$, we make $u$ and $v$ adjacent if and only if $v_{|u|}=1$ and there is no $i$ satisfying $0\leq i<|u|$ and $u_i=v_i=1$.
		\item There are no other edges in $\str{G}^\triangle$.
	\end{enumerate}
\end{definition}
\begin{definition}[Relation $\perp$]
	Given $u,v\in G$ with $|u|\leq |v|$, we write $u\perp v$ if one of the following is satisfied:
	\begin{enumerate}
		\item There exists $i$ satisfying $0\leq i<|u|$ and $u_i=v_i=1$.
		\item There is no $i$ satisfying $0\leq i<|u|$ and $v_i=1$.
		\item There is no $i$ satisfying $0\leq i<|u|$ and $u_i=1$.
	\end{enumerate}
\end{definition}

Let $\str{R}_3$ denote the generic triangle-free graph. To characterise big Ramsey degrees of $\str{R}_3$, we introduce the following  technical definition.

\begin{definition}[Triangle-free diaries]
	\label{def:trianglediary}
	A set $S\subseteq \Sigma^*$ is called a \emph{triangle-free-type} if no member of $S$ extends any other and precisely one of the following four conditions is satisfied for every $i$ with $0\leq i< \sup_{w\in S}|w|$:
	\begin{enumerate}
		\item \textbf{Leaf:}  There is $w\in \overline{S}_i$ with $w\neq 0^i$ such that for every distinct $u,v\in \{z\in \overline{S}_i\setminus\{w\}:z\not\perp w\}$ it holds that $u \perp v$. 
			Moreover:
			\[
				\overline{S}_{i+1}=\{z\in \overline{S}_i\setminus\{w\}:z\perp w\}\cont 0 \cup \{z\in \overline{S}_i\setminus\{w\}:z\not\perp w\}\cont 1. 
			\]
		\item \textbf{Splitting:}  There is $w\in \overline{S}_i$ such that 
			\[
			      \overline{S}_{i+1}=\overline{S}_i\cont 0\cup \{w\}\cont 1.
			\]
		\item \textbf{Non-splitting first neighbour:}  $0^i\in\overline{S}_i$ and 
			\[
				\overline{S}_{i+1}=(\overline{S}_i\setminus \{0^i\})\cont 0\cup \{{0^i}\cont 1\}.
			\]
		\item \textbf{New $\perp$:} There are distinct words $v,w\in \overline{S}_i$ with $0^i\not\in \{v, w\}$, $v\not\perp w$ such that
			\[
			      \overline{S}_{i+1}=(\overline{S}_i\setminus \{v,w\})\cont 0\cup \{v,w\}\cont 1.
			\]
	\end{enumerate}
	\begin{figure}
		\includegraphics{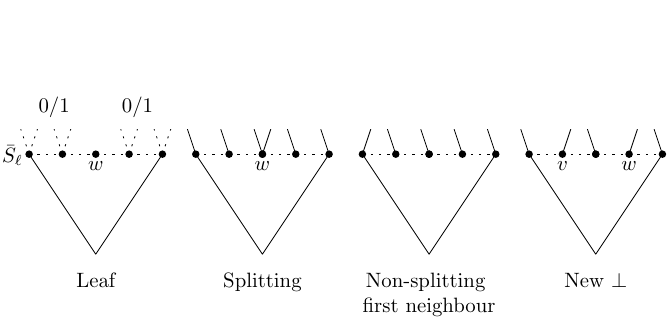}
		\caption{Possible levels in triangle-free diaries.}
		\label{fig:levels3}
	\end{figure}
	See also Figure~\ref{fig:levels3}.
\end{definition}
\begin{remark}
	Tringle-free diares use $0$ to represent non-edges and $1$ to represent edges using the ``passing number'' representation
	which has been known for the Rado graph~\cite{todorcevic2010introduction,Laflamme2006}.
  It is again interesting to understand triangle-free diaries in the context of their level structures.
	While for the Rado graph the level structure is trivial and holds no information except for the lexicographic order,
	in the triangle-free graph it distinguishes two types of vertices.  If vertex has no neighbour defined yet (it is a word $0\cdots 0$), it may split into vertices connected by an edge.
	There is at most one such vertex in a level strcture. On the other hand, if vertex has a neighbour already defined, splitting it to two vertices connected by an edge would
	close a triangle.
	Consequently, it is an interesting event when the first neighbour of a vertex is introduced. Moreover, a given pair of vertices
	may or may not have a common neighbour. If common neighbour exists, there will be no edges between their respective successors.
	This is recorded by relation $\perp$ and the corresponding interesting event.
\end{remark}

Given a triangle-free graph $\str{H}$, we let $T^\triangle(\str{H})$ be the set of all triangle-free-types $S$ such that the structure induced by $\str{G}^\triangle$ on $L(S)$ is isomorphic to $\str{H}$. We can now recover the following result of~\cite{Balko2021exact}. 

\begin{theorem}[\cite{Balko2021exact}]\label{thm:trianglefree}
	For every finite triangle-free graph $\str{H}$, the big Ramsey degree of $\str{H}$ in the generic triangle-free graph $\str{R}_3$ equals $|T^\triangle(\str{H})|\cdot |\mathrm{Aut}(\str{H})|$.
\end{theorem}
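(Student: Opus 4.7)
The plan is to transplant the entire machinery developed in Sections~3--7 to the triangle-free setting. First I would check that the relation $\perp$ on $\Sigma^*=\{0,1\}^*$ behaves as the natural ``witness of non-adjacency'' in $\str{G}^\triangle$, and prove the direct analog of Theorem~\ref{thm:posetemb}: for any fixed enumeration of $\str{R}_3$ there is an embedding $\psi\colon \str{R}_3\to \str{G}^\triangle$ whose image is a triangle-free diary. Following the inductive construction used there, I would encode each vertex $v$ by a word which records, position by position, whether $v$ is adjacent to each earlier vertex, then thicken the tree by repeatedly applying the four events (Leaf / Splitting / Non-splitting first neighbour / New $\perp$) to resolve the level-structure into the canonical form of Definition~\ref{def:trianglediary}. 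As in Section~\ref{sec:preliminaries}, I would formulate a notion of \emph{level structure} on $S\subseteq \Sigma^*_\ell$ remembering $\lexleq$, the ``already adjacent'' relation inherited from $\str{G}^\triangle$, and $\perp$, and verify the analog of Proposition~\ref{prop:levelstr}: each of the four triangle-free events produces a precisely controlled modification of the level structure (deleting a vertex, duplicating one, adding a new edge, or adding a new $\perp$).

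For the upper bound I would define \emph{interesting levels} of $S\subseteq \Sigma^*$ to be those where the level structure changes or where incompatibility is created, and call $f\colon S\to \Sigma^*$ \emph{shape-preserving} if it commutes with the induced $\tau$ operator. The counterparts of Propositions~\ref{prop:typeextend} and~\ref{prop:typeextend2} then say that boring one-letter extensions of a compatible set $S\subseteq \Sigma^*_\ell$ lift to boring extensions of all of $\Sigma^*_\ell$, and to higher levels as well. With these tools in place, the one-level pigeonhole (analog of Lemma~\ref{lem:pigeonhole}) follows by applying Theorem~\ref{thm:CS} to the induced colouring on words in $\Pi^*_{\max(S)}$, and then the full Ramsey theorem for shape-preserving maps (analog of Theorem~\ref{thm:multCS}) is obtained by the same diagonal iteration used in Section~6. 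Defining $\chi_\str{H}\colon \binom{\str{R}_3}{\str{H}}\to T^{\triangle,\mathrm{lab}}(\str{H})$ by $\chi_\str{H}(f)=(\tau(\psi\circ f[H]),\tau_{\psi\circ f[H]}\circ \psi\circ f)$ and iterating this Ramsey theorem over the finitely many elements of $T^{\triangle,\mathrm{lab}}(\str{H})$, as in Corollary~\ref{cor:big_ramsey}, yields the upper bound $|T^\triangle(\str{H})|\cdot |\mathrm{Aut}(\str{H})|$.

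For the matching lower bound I would prove that $\chi_\str{H}$ is a recurrent colouring in the sense of Theorem~\ref{thm:embthm}. Given an arbitrary self-embedding $f\colon \str{R}_3\to \str{R}_3$, I would pass through a fixed embedding $\eta\colon \str{G}^\triangle\to \str{R}_3$ and canonicalise $\psi\circ f\circ \eta$ by the same Carlson--Simpson argument as in Lemma~\ref{lem:canonical}, producing a shape-preserving $g$ and a sequence $(N_i)$ so that $\psi\circ f\circ \eta\circ g$ sends each level of $\Sigma^*$ to a single level of $\Sigma^*$. Then, precomposing with a sufficiently slow dilation $d$ (repeating each letter enough times so that each original level has room to develop both possible ``1 versus 0'' choices and every new incompatibility), I would prove the triangle-free analog of Lemma~\ref{lem:ambpreserving}: the composite preserves the level structures of any triangle-free diary $S$, and hence $\tau(\psi\circ f\circ \eta\circ g\circ d[S])=S$. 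This gives $\phi\in \binom{\str{R}_3}{\str{R}_3}$ with $\chi_\str{H}\circ f\circ \phi=\chi_\str{H}$ and therefore the lower bound $|T^\triangle(\str{H})|\cdot |\mathrm{Aut}(\str{H})|$.

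The main obstacle will be the level-structure analysis (the analogs of Propositions~\ref{prop:levelstr}, \ref{prop:typeextend}, and~\ref{prop:typeextend2}) and, in particular, the ``non-splitting first neighbour'' event, which has no counterpart in the poset setting. This event is forced by the triangle-free constraint: once the distinguished word $0^i$ becomes a neighbour, every vertex that shares a 1 with $0^i$ is permanently $\perp$-committed. Consequently, when checking that boring extensions exist and propagate to larger compatible level-sets, one needs to verify that the constraint ``no two related vertices may be extended by $(1,1)$ unless it would create a forced triangle-free incompatibility'' is always satisfiable by choosing a safe letter; this requires a case analysis that has a different flavor from the poset case, where the main constraint was $(\R,\L)$ incompatibility of lex-ordered pairs. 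In the lower-bound canonicalisation, the corresponding subtle point is that producing explicit witnesses inside $d(S)$ that force a new edge or a new $\perp$ to survive passage through $\psi\circ f\circ \eta\circ g$ must respect the triangle-free condition of $\str{G}^\triangle$; the dilation factor in $d$ must be large enough to accommodate all such witnesses simultaneously, but the paper's use of factor $3$ suggests a small constant will again suffice after a careful enumeration of the events.
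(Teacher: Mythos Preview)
The paper does not actually prove Theorem~\ref{thm:trianglefree}; it is quoted from~\cite{Balko2021exact}, and the surrounding text only asserts that the Carlson--Simpson machinery of Sections~3--8 can be adapted to recover it via the compact description in Definition~\ref{def:trianglediary}. Your proposal is precisely that adaptation, and the overall outline---embedding $\str{R}_3$ into a triangle-free diary, defining level structures and boring extensions, proving the analogs of Propositions~\ref{prop:typeextend} and~\ref{prop:typeextend2}, iterating Lemma~\ref{lem:pigeonhole} to obtain the upper bound, and then running the canonicalisation of Section~\ref{sec:lowerBound} for the lower bound---is exactly what the paper has in mind.

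One correction to your reading of the ``non-splitting first neighbour'' event: you write that ``once the distinguished word $0^i$ becomes a neighbour, every vertex that shares a $1$ with $0^i$ is permanently $\perp$-committed,'' but $0^i$ has no $1$'s, so nothing shares a $1$ with it. The actual role of this event is different. By clauses~(2) and~(3) of the definition of $\perp$, the all-zeros word $0^i$ is $\perp$ to every word on its level; hence it can never be the leaf word (the leaf clause explicitly requires $w\neq 0^i$) and can never participate in a ``New $\perp$'' event. The dedicated event is the only way for the all-zeros branch to acquire its first $1$, after which it becomes an ordinary word. This is the asymmetry you must track when lifting boring extensions (the all-zeros word always has $0$ as a safe letter, but may need to be handled separately) and when designing the dilation $d$ and the witnesses in the analog of Lemma~\ref{lem:ambpreserving}.
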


\section{Concluding remarks}
The techniques presented in this paper are quite general and can be applied to a relatively wide class of
structures in finite relational languages with relations of arity at most 2,
defined by special triangle conditions isolated in~\cite{balko2021big}.  Important examples
of such structures are metric spaces with finitely many distances, or metric
spaces associated to primitive metrically homogeneous graphs of finite diameter from
Cherlin's catalogue of metrically homogeneous graphs~\cite{Cherlin2013,Aranda2017}.

However, forbidding a configuration on 4 or more vertices (such as in the universal homogeneous $K_4$-free
graph) will run into a limitation of regular trees used here. This can be
overcome in two ways. Either a Ramsey theorem for coding trees can be proved~\cite{dobrinen2017universal,dobrinen2019ramsey,zucker2020,Balko2021exact} or a theorem for trees with finite but unbounded
branching can be used~\cite{Balko2023Sucessor} to represent a more regular tree constructed from a kind of amalgamation of all coding trees corresponding to all possible enumerations of the structure, as used in this paper.

Structures with relations
of arity 3 and more require trees with unbounded branching together with
more involved trees of \emph{weak types}. Upper bounds for structures with very simple restrictions can be obtained using the product form of the Milliken tree theorem~\cite{Hubicka2020uniform,braunfeld2023big}, while more general results can be obtained using trees with a successor operation~\cite{Balko2023Sucessor}, see also the announcement in \cite{typeamalg}.
Even the characterisation of big Ramsey degrees of 3-uniform hypergraphs shows new
and somewhat surprising phenomena, and is a topic of a follow-up paper.

The actual counts of diaries defined in this paper, as well as for diares of $K_4$-free graphs, 
are computed in~\cite{Vodsedalek2025bc,Vodsedalek2025}.

\section*{Acknowledgement}
We would like to thank the two anonymous referees for remarks which significantly improved clarity of this paper, and to Štěpán Vodseďálek for pointing out an important typo in Definitions~\ref{def:posetdiary} and \ref{def:trianglediary}.

D.~Ch., J.~H. and M.~K. were supported by the project 21-10775S of  the  Czech  Science Foundation (GA\v CR). During the revisions D.~Ch., and J.~H. were supported by project 25-15571S of  the  Czech  Science Foundation (GA\v CR). This article is part of a project that has received funding from the European Research Council (ERC) under the European Union's Horizon 2020 research and innovation programme (grant agreement No 810115).
M.~B. was supported by the Center for Foundations of Modern Computer Science (Charles University project UNCE/SCI/004). 
N.~D.\ was supported by National Science Foundation grants DMS-1901753 and DMS-2300896. L.~V. was supported by Beatriu de Pin\'os BP2018, funded by the AGAUR (Government of Catalonia) and by the Horizon 2020 programme No 801370.  A.~Z.\ was supported by NSF grant DMS-2054302 and NSERC grants RGPIN-2023-03269 and DGECR-2023-00412. 

\bibliographystyle{amsalpha}
\bibliography{ramsey.bib}

\newcommand{\etalchar}[1]{$^{#1}$}
\providecommand{\bysame}{\leavevmode\hbox to3em{\hrulefill}\thinspace}
\providecommand{\MR}{\relax\ifhmode\unskip\space\fi MR }
\providecommand{\MRhref}[2]{%
  \href{http://www.ams.org/mathscinet-getitem?mr=#1}{#2}
}
\providecommand{\href}[2]{#2}
\begin{thebibliography}{ABWH{\etalchar{+}}26}

\bibitem[ABC{\etalchar{+}}23]{typeamalg}
Andr\'es Aranda, Samuel Braunfeld, David Chodounsk{\'y}, Jan Hubi{\v{c}}ka,
  Mat{\v{e}}j Kone{\v{c}}n{\'y}, Jaroslav Ne{\v{s}}et{\v{r}}il, and Andy
  Zucker, \emph{Type-respecting amalgamation and big {R}amsey degrees},
  Proceedings of the 12th European Conference on Combinatorics, Graph Theory
  and Applications EUROCOMB’23 (Daniel Kráľ and Jaroslav
  Ne{\v{s}}et{\v{r}}il, eds.), MUNI Press, 2023, pp.~57--65.

\bibitem[ABWH{\etalchar{+}}26]{Aranda2017}
Andres Aranda, David Bradley-Williams, Jan Hubi{\v c}ka, Miltiadis Karamanlis,
  Michael Kompatscher, Mat{\v e}j Kone{\v c}n{\'y}, and Micheal Pawliuk,
  \emph{Ramsey expansions of metrically homogeneous graphs}, European Journal
  of Combinatorics \textbf{132} (2026), in press, arXiv:1707.02612.

\bibitem[Bar20]{barbosa2020categorical}
Keegan~Dasilva Barbosa, \emph{A categorical notion of precompact expansion},
  arXiv:2002.11751 (2020).

\bibitem[BCD{\etalchar{+}}23]{Balko2023Sucessor}
Martin Balko, David Chodounsk{\' y}, Natasha Dobrinen, Jan Hubi{\v c}ka,
  Jaroslav Ne{\v{s}}et{\v{r}}il, Mat{\v e}j Kone{\v c}n{\' y}, and Andy Zucker,
  \emph{Ramsey theorem for trees with successor operation}, Preprint,
  arXiv:2311.06872, 2023.

\bibitem[BCD{\etalchar{+}}24]{Balko2021exact}
Martin Balko, David Chodounsk{\' y}, Natasha Dobrinen, Jan Hubi{\v c}ka, Mat{\v
  e}j Kone{\v c}n{\' y}, Llu{\'\i}s Vena, and Andy Zucker, \emph{Exact big
  {R}amsey degrees for finitely constrained binary free amalgamation classes},
  Journal of the European Mathematical Society (2024).

\bibitem[BCdR{\etalchar{+}}24]{braunfeld2023big}
Samuel Braunfeld, David Chodounsk{\'y}, No{\'e} de~Rancourt, Jan Hubi{\v{c}}ka,
  Jamal Kawach, and Mat{\v{e}}j Kone{\v{c}}n{\'y}, \emph{Big {R}amsey degrees
  and infinite languages}, Advances in Combinatorics \textbf{2024:4} (2024),
  26pp.

\bibitem[BCH{\etalchar{+}}19]{Hubickabigramsey}
Martin Balko, David Chodounsk{\' y}, Jan Hubi{\v c}ka, Mat{\v e}j Kone{\v
  c}n{\' y}, and Llu{\'\i}s Vena, \emph{Big {R}amsey degrees of 3-uniform
  hypergraphs}, Acta Mathematica Universitatis Comenianae \textbf{88} (2019),
  no.~3, 415--422, Extended abstract for Eurocomb 2019.

\bibitem[BCH{\etalchar{+}}21]{balko2021big}
Martin Balko, David Chodounsk{\'y}, Jan Hubi{\v{c}}ka, Mat{\v{e}}j
  Kone{\v{c}}n{\'y}, Jaroslav Ne{\v{s}}et{\v{r}}il, and Llu{\'\i}s Vena,
  \emph{Big {R}amsey degrees and forbidden cycles}, Extended Abstracts EuroComb
  2021 (Jaroslav Ne{\v{s}}et{\v{r}}il, Guillem Perarnau, Juanjo Ru{\'e}, and
  Oriol Serra, eds.), Springer International Publishing, 2021, pp.~436--441.

\bibitem[BCH{\etalchar{+}}22]{Hubicka2020uniform}
Martin Balko, David Chodounsk{\'y}, Jan Hubi{\v{c}}ka, Mat{\v{e}}j
  Kone{\v{c}}n{\'y}, and Llu{\'\i}s Vena, \emph{Big {R}amsey degrees of
  3-uniform hypergraphs are finite}, Combinatorica \textbf{42} (2022), no.~2,
  659--672.

\bibitem[Bod15]{Bodirsky2015}
Manuel Bodirsky, \emph{Ramsey classes: Examples and constructions}, Surveys in
  Combinatorics 2015 \textbf{424} (2015), 1.

\bibitem[CDP22a]{coulson2022indivisibility}
Rebecca Coulson, Natasha Dobrinen, and Rehana Patel, \emph{Fra{\"{i}}ss{\'{e}}
  structures with {S}{D}{A}{P}+, {P}art {I}: {I}ndivisibility},
  arXiv:2207.06393 (2022).

\bibitem[CDP22b]{coulson2022SDAP}
\bysame, \emph{Fra{\"{i}}ss{\'{e}} structures with {S}{D}{A}{P}+, {P}art
  {I}{I}: {S}imply characterized big {R}amsey structures}, arXiv:2207.06505,
  2022.

\bibitem[Che22]{Cherlin2013}
Gregory Cherlin, \emph{Homogeneous ordered graphs, metrically homogeneous
  graphs, and beyond}, Lecture Notes in Logic, vol.~1, Cambridge University
  Press, 2022.

\bibitem[CS84]{carlson1984}
Timothy~J. Carlson and Stephen~G. Simpson, \emph{A dual form of {R}amsey's
  theorem}, Advances in Mathematics \textbf{53} (1984), no.~3, 265--290.

\bibitem[Dev79]{devlin1979}
Denis Devlin, \emph{Some partition theorems and ultrafilters on $\omega$},
  Ph.D. thesis, Dartmouth College, 1979.

\bibitem[Dob20a]{dobrinen2017universal}
Natasha Dobrinen, \emph{The {R}amsey theory of the universal homogeneous
  triangle-free graph}, Journal of Mathematical Logic \textbf{20} (2020),
  no.~02, 2050012.

\bibitem[Dob20b]{dobrinen2020ramsey}
\bysame, \emph{The {R}amsey theory of the universal homogeneous triangle-free
  graph part {I}{I}: Exact big {R}amsey degrees}, arXiv:2009.01985 (2020).

\bibitem[Dob20c]{dobrinen2020exposition}
\bysame, \emph{Ramsey theory on infinite structures and the method of strong
  coding trees}, Contemporary Logic and Computing (A.~Rezus, ed.), Landscapes
  in logic, College Publications, 2020, pp.~444--467.

\bibitem[Dob23a]{dobrinen2020forcing}
\bysame, \emph{Forcing in {R}amsey theory}, International Congress of
  Mathematicians, EMS Press, 2023, pp.~1462--1486.

\bibitem[Dob23b]{dobrinen2019ramsey}
\bysame, \emph{The {R}amsey theory of {H}enson graphs}, Journal of Mathematical
  Logic \textbf{23} (2023), no.~01, 2250018.

\bibitem[Dob23c]{dobrinen2021ramsey}
\bysame, \emph{Ramsey theory of homogeneous structures: current trends and open
  problems}, International Congress of Mathematicians, EMS Press, 2023,
  pp.~1462--1486.

\bibitem[DR12]{Dellamonica2012}
Domingos Dellamonica and Vojt{\v{e}}ch R{\"o}dl, \emph{Distance preserving
  {R}amsey graphs}, Combinatorics, Probability and Computing \textbf{21}
  (2012), no.~04, 554--581.

\bibitem[HK26]{hubicka2025twenty}
Jan Hubi{\v{c}}ka and Mat{\v{e}}j Kone{\v{c}}n{\'y}, \emph{Twenty years of
  {N}e{\v s}et{\v r}il's classification programme of {R}amsey classes},
  Computer Science Review \textbf{59} (2026), 100814.

\bibitem[HKVZ25]{Vodsedalek2025}
Jan Hubi{\v c}ka, Mat{\v e}j Kone{\v c}n{\'y}, {\v S}t{\v e}p{\' a}n Vodse{\v
  d}{\' a}lek, and Andy Zucker, \emph{Counting big {R}amsey degrees of the
  homogeneous and universal {$K_4$}-free graph}, EuroComb'25, Booklet of
  extended abstracts, HUN-REN Alfr{\' e}d R{\' e}nyi Institute of Mathematics,
  2025, pp.~653--659.

\bibitem[HN19]{Hubicka2016}
Jan Hubi{\v{c}}ka and Jaroslav Ne\v{s}et\v{r}il, \emph{All those {R}amsey
  classes ({R}amsey classes with closures and forbidden homomorphisms)},
  Advances in Mathematics \textbf{356C} (2019), 106791.

\bibitem[Hub20]{hubicka2020structural}
Jan Hubi{\v{c}}ka, \emph{Structural {R}amsey theory and the extension property
  for partial automorphisms}, habilitation, Charles University, 2020,
  arXiv:2010.05041.

\bibitem[Hub25]{Hubicka2020CS}
\bysame, \emph{Big {R}amsey degrees using parameter spaces}, Advances in
  Mathematics \textbf{478} (2025), 110386.

\bibitem[HZ25]{hubicka2024survey}
Jan Hubi{\v{c}}ka and Andy Zucker, \emph{A survey on big {R}amsey structures},
  Zbornik Radova (Beograd) \textbf{22} (2025), no.~30, 317--355, Selected
  topics in combinatorial analysis II.

\bibitem[Kar13]{Karagiannis2013}
Nikolaos Karagiannis, \emph{A combinatorial proof of an infinite version of the
  {H}ales--{J}ewett theorem}, Journal of Combinatorics \textbf{4} (2013),
  no.~2, 273--291.

\bibitem[Kon23]{Konecny2023phd}
Mat{\v e}j Kone{\v c}n{\'y}, \emph{Model theory and extremal combinatorics},
  Ph.D. thesis, Charles University, 2023.

\bibitem[Lar08]{larson2008counting}
Jean~A. Larson, \emph{Counting canonical partitions in the random graph},
  Combinatorica \textbf{28} (2008), no.~6, 659--678.

\bibitem[LNVTS10]{laflamme2010partition}
Claude Laflamme, Lionel Nguyen Van~Thé, and Norbert~W. Sauer, \emph{Partition
  properties of the dense local order and a colored version of {M}illiken’s
  theorem}, Combinatorica \textbf{30} (2010), no.~1, 83--104.

\bibitem[LSV06]{Laflamme2006}
Claude Laflamme, Norbert~W. Sauer, and Vojkan Vuksanovic, \emph{Canonical
  partitions of universal structures}, Combinatorica \textbf{26} (2006), no.~2,
  183--205.

\bibitem[Mac11]{Macpherson2011}
Dugald Macpherson, \emph{A survey of homogeneous structures}, Discrete
  Mathematics \textbf{311} (2011), no.~15, 1599--1634, Infinite Graphs:
  Introductions, Connections, Surveys.

\bibitem[Ma{\v{s}}18]{masulovic2016pre}
Dragan Ma{\v{s}}ulovi{\'c}, \emph{Pre-adjunctions and the {R}amsey property},
  European Journal of Combinatorics \textbf{70} (2018), 268--283.

\bibitem[Ne{\v{s}}07]{Nevsetvril2007}
Jaroslav Ne{\v{s}}et{\v{r}}il, \emph{Metric spaces are {R}amsey}, European
  Journal of Combinatorics \textbf{28} (2007), no.~1, 457--468.

\bibitem[NR77]{Nevsetvril1977}
Jaroslav Ne{\v{s}}et{\v{r}}il and Vojt{\v{e}}ch R{\"o}dl, \emph{A structural
  generalization of the {R}amsey theorem}, Bulletin of the American
  Mathematical Society \textbf{83} (1977), no.~1, 127--128.

\bibitem[NR84]{Nevsetvril1984}
\bysame, \emph{Combinatorial partitions of finite posets and
  lattices---{R}amsey lattices}, Algebra Universalis \textbf{19} (1984), no.~1,
  106--119.

\bibitem[NR18]{nevsetvril2018ramsey}
\bysame, \emph{Ramsey partial orders from acyclic graphs}, Order \textbf{35}
  (2018), no.~2, 293--300.

\bibitem[NVT08]{NVT2008}
Lionel Nguyen Van~Th{\'e}, \emph{Big {R}amsey degrees and divisibility in
  classes of ultrametric spaces}, Canadian Mathematical Bulletin \textbf{51}
  (2008), no.~3, 413–423.

\bibitem[NVT15]{NVT14}
\bysame, \emph{{A survey on structural {R}amsey theory and topological dynamics
  with the {K}echris--{P}estov--{T}odorcevic correspondence in mind}}, Zbornik
  Radova (Beograd) \textbf{17} (2015), no.~25, 189--207, Selected topics in
  combinatorial analysis.

\bibitem[{OEI}25]{oeis}
{OEIS Foundation Inc.}, \emph{The {O}n-{L}ine {E}ncyclopedia of {I}nteger
  {S}equences}, 2025, Published electronically at \url{http://oeis.org}.

\bibitem[PTW85]{Trotter1985}
Madeleine Paoli, William~T. Trotter, and James~W. Walker, \emph{Graphs and
  orders in {R}amsey theory and in dimension theory}, Graphs and Order (Ivan
  Rival, ed.), NATO AST series, vol. 147, Springer, 1985, pp.~351--394.

\bibitem[Sau98]{sauer1998}
Norbert~W. Sauer, \emph{Edge partitions of the countable triangle free
  homogeneous graph}, Discrete Mathematics \textbf{185} (1998), no.~1-3,
  137--181.

\bibitem[Sok12]{sokic2012ramsey}
Miodrag Soki{\'c}, \emph{Ramsey properties of finite posets}, Order \textbf{29}
  (2012), no.~1, 1--30.

\bibitem[SZ17]{solecki2017ramsey}
S{\l}awomir Solecki and Min Zhao, \emph{A {R}amsey theorem for partial orders
  with linear extensions}, European Journal of Combinatorics \textbf{60}
  (2017), 21--30.

\bibitem[Tod10]{todorcevic2010introduction}
Stevo Todorcevic, \emph{Introduction to {R}amsey spaces}, vol. 174, Princeton
  University Press, 2010.

\bibitem[Vod25]{Vodsedalek2025bc}
Štěpán Vodseďálek, \emph{Counting big {R}amsey degrees}, Bachelor's
  thesis, Charles University, 2025.

\bibitem[Zuc16]{zucker2016topological}
Andy Zucker, \emph{Topological dynamics of automorphism groups, ultrafilter
  combinatorics, and the generic point problem}, Transactions of the American
  Mathematical Society \textbf{368} (2016), no.~9, 6715--6740.

\bibitem[Zuc19]{zucker2017}
\bysame, \emph{Big {R}amsey degrees and topological dynamics}, Groups,
  Geometry, and Dynamics \textbf{13} (2019), no.~1, 235--276.

\bibitem[Zuc22]{zucker2020}
Andy Zucker, \emph{On big {R}amsey degrees for binary free amalgamation
  classes}, Advances in Mathematics \textbf{408} (2022), 108585.

\end{thebibliography}
\end{document}